\newcommand{\calP}{\mathcal{P}}
\newcommand{\calA}{\mathcal{A}}
\newcommand{\calN}{\mathcal{N}}
\newcommand{\calD}{\mathcal{D}}
\newcommand{\R}{\mathbb{R}}
\newcommand{\N}{\mathbb{N}}
\newcommand{\E}{\mathbb{E}}
\renewcommand{\P}{\mathbb{P}}
\newcommand{\WD}{\mathcal{W}}
\newcommand{\AWD}{\mathcal{AW}}
\newcommand{\cpl}{\text{Cpl}}
\newcommand{\bccpl}{\text{Cpl}_{\text{bc}}}
\newtheorem{theorem}{Theorem}[section]
\newtheorem{lemma}[theorem]{Lemma}
\newtheorem{setting}[theorem]{Setting}
\theoremstyle{remark}
\newtheorem{remark}[theorem]{Remark}
\theoremstyle{definition}
\newtheorem{definition}[theorem]{Definition}
\newtheorem{example}[theorem]{Example}
\definecolor{newblue}{rgb}{0.0, 0.0, 0.9}
\newcommand{\add}[1]{{\color{black}#1}}
\title{Convergence of Adapted Empirical Measures on $\mathbb{R}^{d}$}
\author{Beatrice Acciaio\thanks{Department of Mathematics, ETH Z\"{u}rich, Switzerland.\newline \emph{beatrice.acciaio@math.ethz.ch}, \emph{songyan.hou@math.ethz.ch}} \, ~and~ Songyan Hou\footnotemark[1]}
\date{\today }
\begin{document}

\maketitle

\begin{abstract}
We consider empirical measures of $\R^{d}$-valued stochastic process in finite discrete-time. We show that the adapted empirical measure introduced in the recent work \cite{backhoff2022estimating} by Backhoff et al. in compact spaces can be defined analogously on $\R^{d}$, and that it converges almost surely to the underlying measure under the adapted Wasserstein distance. Moreover, we quantitatively analyze the convergence of the adapted Wasserstein \add{distance} between those two measures. We establish convergence rates of the expected error as well as the deviation error under different moment conditions. \add{Under suitable integrability and kernel assumptions, we recover the optimal convergence rates of both expected error and deviation error.} Furthermore, we propose a modification of the adapted empirical measure with \add{projection} on a non-uniform grid, which obtains the same convergence rate but under weaker assumptions. \\

\noindent\emph{Keywords:} adapted Wasserstein distance, empirical measure, convergence rate\\
MSC (2020): 60B10, 62G30, 49Q22
\end{abstract}

\section{Introduction}\label{sec:introduction}
Empirical measures analysis addresses the fundamental question of how well empirical observations approximate the underlying distribution. The quality of the approximation is measured by how close the empirical value approximates the true value that we are interested in (e.g. expected utility). Precisely stated, it studies the convergence in functional values under the empirical measure to the ones under the true, underlying measure. For many problems, the functional is of the form $\int f(x)\mu(dx)$, where $f$ is a real-valued continuous function and $\mu$ is a probability measure. Under 
mild integrability assumptions, such functional is continuous with respect to the Wasserstein distance, an important metric intensively studied in statistics theories \cite{fournier2015rate} as well as adopted in machine learning applications \cite{adler2018banach}. Therefore, the convergence of empirical measures is classically studied under Wasserstein distance. However, in stochastic finance, many important problems such as pricing and hedging problems, optimal stopping problems, and utility maximization problems, are actually not continuous with respect to Wasserstein distance.  To wit, two stochastic models can be arbitrarily close to each other under Wasserstein distance, while the values of the mentioned optimization problems under the two models can be quite far from each other. 

These problems, as well as many other stochastic optimization problems in a dynamic framework, are instead Lipschitz continuous with respect to a more strict distance, called adapted
Wasserstein distance, which takes the temporal structure of stochastic processes into account; see \cite{backhoff2020adapted}. This Lipschitz property implies that the adapted Wasserstein distance is strong enough to guarantee the robustness of path-dependent problems. On the other hand, it is so strong that even the empirical measures do not converge to the corresponding underlying measure under it; see \cite{pflug2016empirical}. Intuitively speaking, this is because the empirical measure is so ``singular" that it can not explain the conditional laws. A remedy to this undesirable effect is to modify the empirical measure, so that the new measure has a nontrivial temporal structure and that it converges under adapted Wasserstein distance. In this spirit, Pflug and Pichler in \cite{pflug2016empirical} deploy smoothing convolutions on paths and prove that the convoluted measures converge. Alternatively, Backhoff et al. in \cite{backhoff2022estimating} project paths on a grid and prove that the empirical measures of the projected paths converge. Both works prove that modified measures converge under adapted Wasserstein distance, though they are both limited to the case when the underlying measure is compactly supported. This condition is not satisfied by many important models in stochastic finance, e.g. the discretized Black-Scholes model, the Autoregressive model, the Fama-French model, etc. 

In the present paper, we drop the assumption of compactness, and investigate the case of general, finite discrete-time stochastic processes which take values on $\R^{d}$. We stress the importance of being able to deal with measures with unbounded support, as this is always the case for stochastic models used in finance and insurance applications and beyond.
We define the adapted empirical measure on $\R^{dT}$ analogously to the one in \cite{backhoff2022estimating}, and establish convergence results under adapted Wasserstein distance.

The first main contribution of the present paper is the generalization of all convergence results in \cite{backhoff2022estimating} beyond the compactly supported constraint. We first show that adapted empirical measures converge under adapted Wasserstein distance almost surely as long as the underlying measure is integrable. Then, we compute the convergence rates of the expected error as well as of the deviation error under the uniform finite moment condition and the uniform Lipschitz kernel assumption. Under the finite moment condition, we prove polynomial convergence rates of the expected error as well as of the deviation error depending on the order of the finite moment. In particular, if the underlying measure $\mu \in \mathcal{P}(\R^{dT})$ satisfies $p$-th moment conditions with $p > \frac{d}{d-1}$, we recover the optimal rates of the expected error, which are known for the classical empirical measure with respect to Wasserstein distance; see \cite{fournier2015rate}.
Under the exponential moment condition, we prove the sharp optimal convergence rates of the expected error as well as the deviation error. Indeed, this recovers the optimal expected error rates and exponential deviation rates in \cite{backhoff2022estimating}.

Another main contribution of the article is the introduction of the non-uniform adapted empirical measure. We define a non-uniform grid on $\R^{dT}$, which is denser around the origin and coarser at the distance. Then we project paths on this non-uniform grid and refer to the empirical measure of the projected paths as the non-uniform adapted empirical measure. We show that all convergence results mentioned above are valid for the non-uniform adapted empirical measure as well. Moreover, in this case we can obtain the same polynomial convergence rate of the expected error but under a milder moment assumption, independent of $T$ and decaying with the dimension $d$.

The results obtained in this paper are of relevance for the wide range of applications of the adapted empirical measures on $\R^d$. In statistical learning theory, our results generalize the consistency result of empirical Wasserstein correlation coefficient in \cite{Wiesel2022MeasuringAW}. In machine learning, our results contribute a primal numerical scheme evaluating the adapted Wasserstein distance as for the discriminator of generative adversarial models in \cite{Xu2020COTGANGS,Klemmer2022SPATEGANIG}. In mathematical finance, our results provide finite samples guarantee of distributionally robust evaluation, such as superhedging prices, expected risks, etc.; see \cite{Han2022DistributionallyRR, Obj2021RobustEO}.\\

\noindent{\bf Organization of the paper.}
We conclude the Introduction with a review of the related literature.
Then, in Section \ref{sec:results}, we introduce the uniform and the non-uniform adapted empirical measures and state our main convergence results. In Section \ref{sec:preparation} we provide lemmas which will be used in the proofs of the main results. In Section \ref{sec:moment_estimate} we prove convergence rates of the expected error. In Section \ref{sec:concentration1} we prove polynomial convergence rates of the deviation error under finite moment conditions. In Section \ref{sec:concentration2} we prove exponential convergence rates of the deviation error under finite exponential moment conditions. In Section \ref{sec:asconvergence} we prove almost sure convergence under an integrability assumption.

\subsection{Related literature.}
\subsubsection*{Empirical measure under Wasserstein distance.}
In the literature, much effort has been devoted to the analysis of empirical measures under Wasserstein distance, see e.g. \cite{bolley2007quantitative,dedecker2015deviation,fournier2015rate,gozlan2007large,lei2020convergence,boissard2011simple,dereich2013constructive,boissard2014mean}. Concerning moment estimation, convergence rate results can be found in \cite{boissard2014mean} based on iterative trees, and in \cite{dereich2013constructive} based on a so-called Pierce-type estimate. Later, Fournier et al. \cite{fournier2015rate} prove the sharp convergence result (as far as general laws are considered) by leveraging the Pierce-type estimate in \cite{dereich2013constructive}. 

Regarding the concentration inequalities, most papers concern 0-concentration and only a few discuss the mean concentration. For the 0-concentration inequality, the works \cite{boissard2011simple}, \cite{bolley2007quantitative} and \cite{gozlan2007large} establish the exponential rate under the assumption of transportation-entropy inequality. Fournier et al. in \cite{fournier2015rate} prove the concentration inequality under moment conditions or exponential moment conditions. For the mean-concentration inequality, Dedecker et al. in \cite{dedecker2015deviation} prove it for the Wasserstein-1 distance, using the Lipschitz formulation of the duality. Lei \cite{lei2020convergence} considers the mean-concentration inequality in unbounded functional space and also discusses the moment estimate with rate-optimal upper bounds for functional data distributions.

\subsubsection*{Empirical measure under adapted Wasserstein distance.}
Pflug-Pichler \cite{pflug2016empirical} first notice that the empirical measure of stochastic processes does not converge to the true underlying measure under adapted Wasserstein distance. To amend to this, they introduce a type of convoluted empirical measure which is a convolution of a smooth kernel with the empirical measure. They prove that this convoluted empirical measure converges to the true underlying measure under adapted Wasserstein distance if the underlying measure is compactly supported  with sufficiently regular density bounded away from $0$ and uniform Lipschitz kernels.  Recently, this convoluted empirical measure is applied in \cite{glanzer2019incorporating} incorporating statistical model error into robust prices of contingent claims.

More recently, Backhoff et al. in \cite{backhoff2022estimating} introduce the adapted empirical measure which projects paths on a grid before taking the empirical measure, and establish multiple convergence results. Again under the assumption that the underlying measure is compactly supported, they prove the almost sure convergence of the adapted empirical measure. Under further uniform Lipschitz kernel assumptions, they recover the same moment estimate convergence rate as the one for Wasserstein distance in \cite{fournier2015rate}, which is sharp. Moreover, they prove the exponential concentration inequality under Lipschitz kernel assumptions. Our paper generalizes their result to measures on $\R^{d}$, and introduces the non-uniform adapted empirical measure to weaken the moment assumption. 

\section{Settings and main results}\label{sec:results}
Throughout the paper, we let \add{$d \in \N$} be the dimension of the state space and $T \geq 2$ be the number of time steps. We consider finite discrete-time paths $x = (x_1,\dots,x_T) \in \R^{dT}$, where $x_{t} \in \R^{d}$ represents the value of the path at time $t = 1,\dots, T$. We equip $\R^{dT}$ with a sum-norm $\Vert \cdot \Vert \colon \R^{dT} \to \R$ defined by $\Vert x \Vert = \sum_{t = 1}^{T} \Vert x_{t} \Vert_{\R^{d}}$ for simplicity, but without loss of generality since all norms are equivalent on $\R^{dT}$. Let $\mathcal{P}(\R^{dT})$ be the space of canonical Borel probability measures on $\R^{dT}$, and let $\mu, \nu \in \mathcal{P}(\R^{dT})$.
\begin{definition}[Wasserstein distance]
    For $p \geq 1$, the $p$-th order \textit{Wasserstein distance} $\WD_{p}(\cdot,\cdot)$ on $\mathcal{P}(\R^{dT})$ is defined by 
    \begin{equation*}
        \WD_{p}^{p}(\mu,\nu) = \inf_{\pi \in \cpl(\mu,\nu)}\int\sum_{t=1}^{T}\big\lVert x_{t}-y_{t}\big\rVert^p_{\R^{d}}\pi(dx,dy),
    \end{equation*}
    where $\cpl(\mu,\nu)$ denotes the set of couplings between $\mu$ and $\nu$, that is, probabilities in $\mathcal{P}(\R^{dT}\times \R^{dT})$ with first marginal $\mu$ and second marginal $\nu$. For the first order Wasserstein distance, i.e. when $p=1$, we simply use the notation $\WD(\cdot,\cdot)$.
\end{definition}

In order to take the temporal time structure of stochastic processes into account, it is necessary to restrict ourselves to those couplings which not only match marginal laws but also match conditional laws. We adopt the notation $x_{1:t}=(x_1,...,x_t)$, for $t=1,...,T$.
For $\mu \in \mathcal{P}(\R^{dT})$, we denote the up to time $t$ marginal of $\mu$ by $\mu_{1:t}$, and the kernel (disintegration) of $\mu$ by $\mu_{x_{1:t}}$, so the following holds:
\begin{equation*}
    \mu(dx_{t+1}) = \int_{\R^{dt}}\mu_{x_{1:t}}(dx_{t+1}) \cdot \mu_{1:t}(dx_{1:t}).
\end{equation*}
Similarly, we denote the up to time $t$ marginal of $\pi\in\cpl(\mu,\nu)$ by $\pi_{1:t}$, and the kernel (disintegration) of $\pi$ by $\pi_{x_{1:t}, y_{1:t}}$, so the following holds:
\begin{equation*}
    \pi(dx_{t+1},dy_{t+1}) = \int_{\R^{dt} \times \R^{dt}}\pi_{x_{1:t},y_{1:t}}(dx_{t+1},dy_{t+1})\cdot \pi_{1:t}(dx_{1:t},dy_{1:t}).
\end{equation*} 
Intuitively, we restrict our attention to couplings $\pi \in \cpl(\mu,\nu)$ such that the conditional law of $\pi$ is still a coupling of the conditional laws of $\mu$ and $\nu$, that is, 
\begin{equation}\label{eq:bicausal}
    \pi_{x_{1:t},y_{1:t}} \in \cpl\big(\mu_{x_{1:t}}, \nu_{y_{1:t}}\big).
\end{equation}
Such couplings are called
bi-causal \cite{lassalle2018causal}, and denoted by $\bccpl(\mu,\nu)$.
The causality constraint can be expressed in different equivalent ways, see e.g. \cite{backhoff2017causal,acciaio2020causal} in the context of transport, and \cite{bremaud1978changes} in the filtration enlargement framework. Roughly, in a causal transport, for every time $t$, only information on the $x$-coordinate up to time $t$ is used to determine the mass transported to the $y$-coordinate at time $t$.
And in a bi-causal transport this holds in both directions, i.e. also when exchanging the role of $x$ and $y$.

As already mentioned in the Introduction, the adaptdness or bi-causality constraint turns out to be the correct one to impose on couplings, in order to modify the Wasserstein distance so to ensure robustness of stochastic optimization problems. That is to say, if two measures $\mu,\nu$ are close w.r.t. this distance, then solving w.r.t. $\mu$ optimization problems such as optimal stopping, optimal hedging, utility maximization etc, provides an ``almost optimizer" for $\nu$; see \cite{backhoff2020adapted}.

\begin{definition}[Adapted Wasserstein distance / Nested distance]
    The (first order) \textit{adapted Wasserstein distance} $\AWD$ on $\mathcal{P}(\R^{dT})$ is defined by 
    \begin{equation}\label{eq:adaptedwd}
        \AWD(\mu,\nu) = \inf_{\pi \in \bccpl(\mu,\nu)}\int\sum_{t=1}^{T}\big\lVert x_{t}-y_{t}\big\rVert_{\R^{d}}\pi(dx,dy).
    \end{equation}
\end{definition}
Bi-causal couplings and the corresponding optimal transport problem were considered by R\"{u}schendorf~\cite{ruschendorf1985wasserstein} in so-called `Markov-constructions'. This concept was independently introduced by Pflug-Pichler~\cite{pflug2012distance} in the context of stochastic multistage optimization problems, see also \cite{pflug2014multistage,pflug2015dynamic,pflug2016empirical,glanzer2019incorporating,pichler2013evaluations}, and also considered by  Bion-Nadal and Talay in \cite{bion2019wasserstein} and Gigli in \cite{gigligeometry}. Pflug-Pichler refer to the adapted Wasserstein distance as nested distance, with an alternative representation through dynamic programming principle by disintegrating \eqref{eq:adaptedwd} and replacing conditional laws with \eqref{eq:bicausal}. For notational simplicity, we state it here only for the case $T = 2$, where one obtains the representation 
\begin{equation*}
    \AWD(\mu,\nu) = \inf_{\pi \in \cpl(\mu_1,\nu_1)}\int\Vert x_1 - y_1\Vert_{\R^{d}} + \WD(\mu_{x_1},\nu_{y_1})\pi(dx_1,dy_1).
\end{equation*}
This reflects clearly that $\AWD$ considers not only marginal laws but also the difference between conditional laws. The example below explicitly shows the gap between Wasserstein distance and adapted Wasserstein distance, when conditional laws mismatch. Additionally, when regarding $\mu$ and $\nu$ as distributions of risky assets, it clearly illustrates the inappropriateness of the Wasserstein distance to gauge closeness of financial markets, and the way in which its adapted counterpart amends to it. 
\begin{example}
    Let $\mu, \nu \in \mathcal{P}([0,1]^2)$ be given by $\mu = \frac{1}{2}\delta_{(0,1)} + \frac{1}{2}\delta_{(0,-1)}$ and $\nu = \frac{1}{2}\delta_{(\epsilon,1)} + \frac{1}{2}\delta_{(-\epsilon,-1)}$, with $\epsilon \in (0, 1)$, visualized in Figure \ref{fig:test2}. Then $\WD(\mu,\nu) = \epsilon$, while $\AWD(\mu,\nu) = 1 + \epsilon$, since $\WD(\mu_{x_1},\nu_{y_1}) = 1$ no matter how we couple the first coordinate.
    \begin{figure}[ht]
        \centering
        \begin{subfigure}{.4\textwidth}
          \centering
          \includegraphics[width=0.6\linewidth]{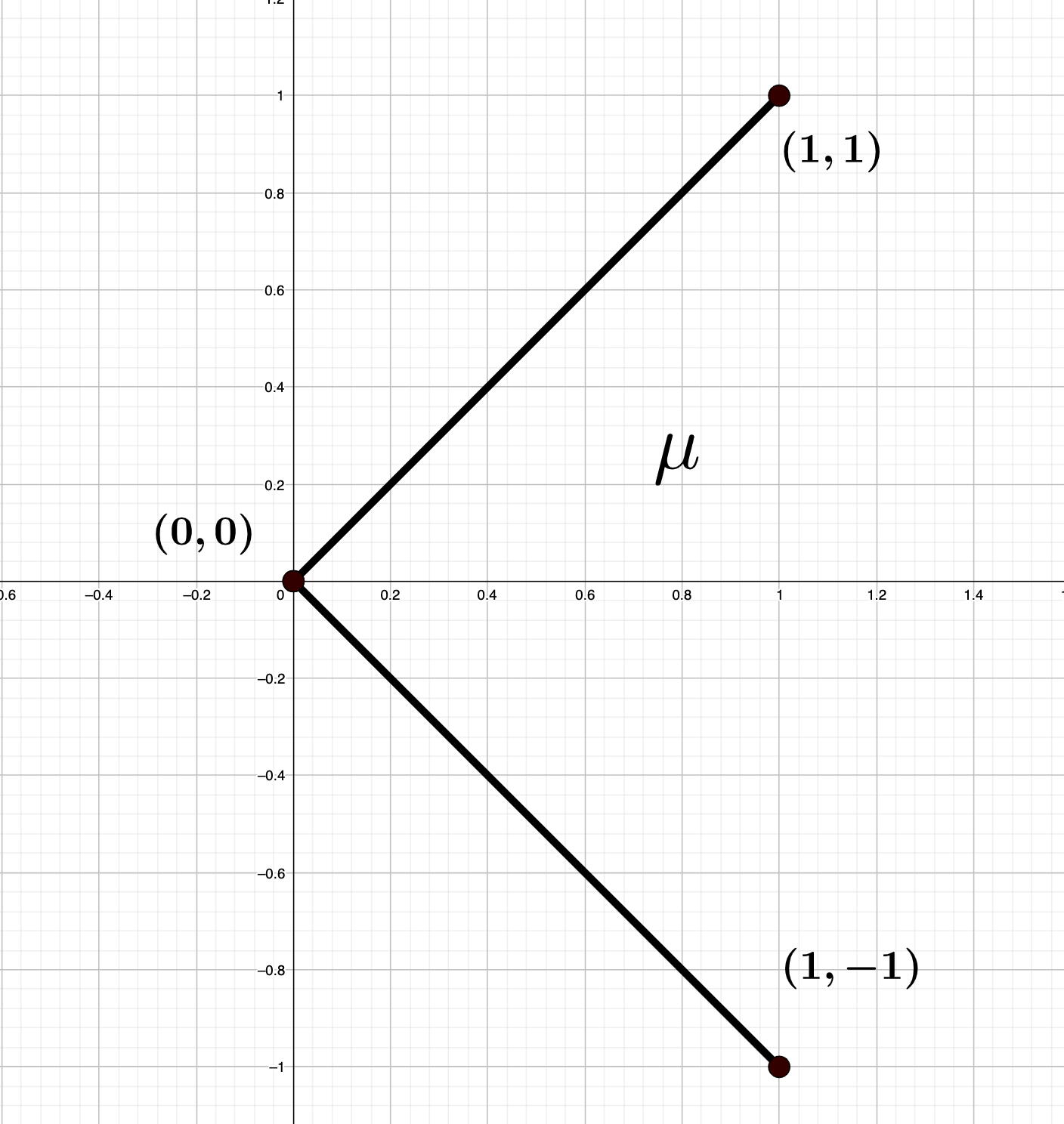}
          \caption{$\mu = \frac{1}{2}\delta_{(0,1)} + \frac{1}{2}\delta_{(0,-1)}$}
          \label{fig:sub11}
        \end{subfigure}
        \begin{subfigure}{.4\textwidth}
          \centering
          \includegraphics[width=0.6\linewidth]{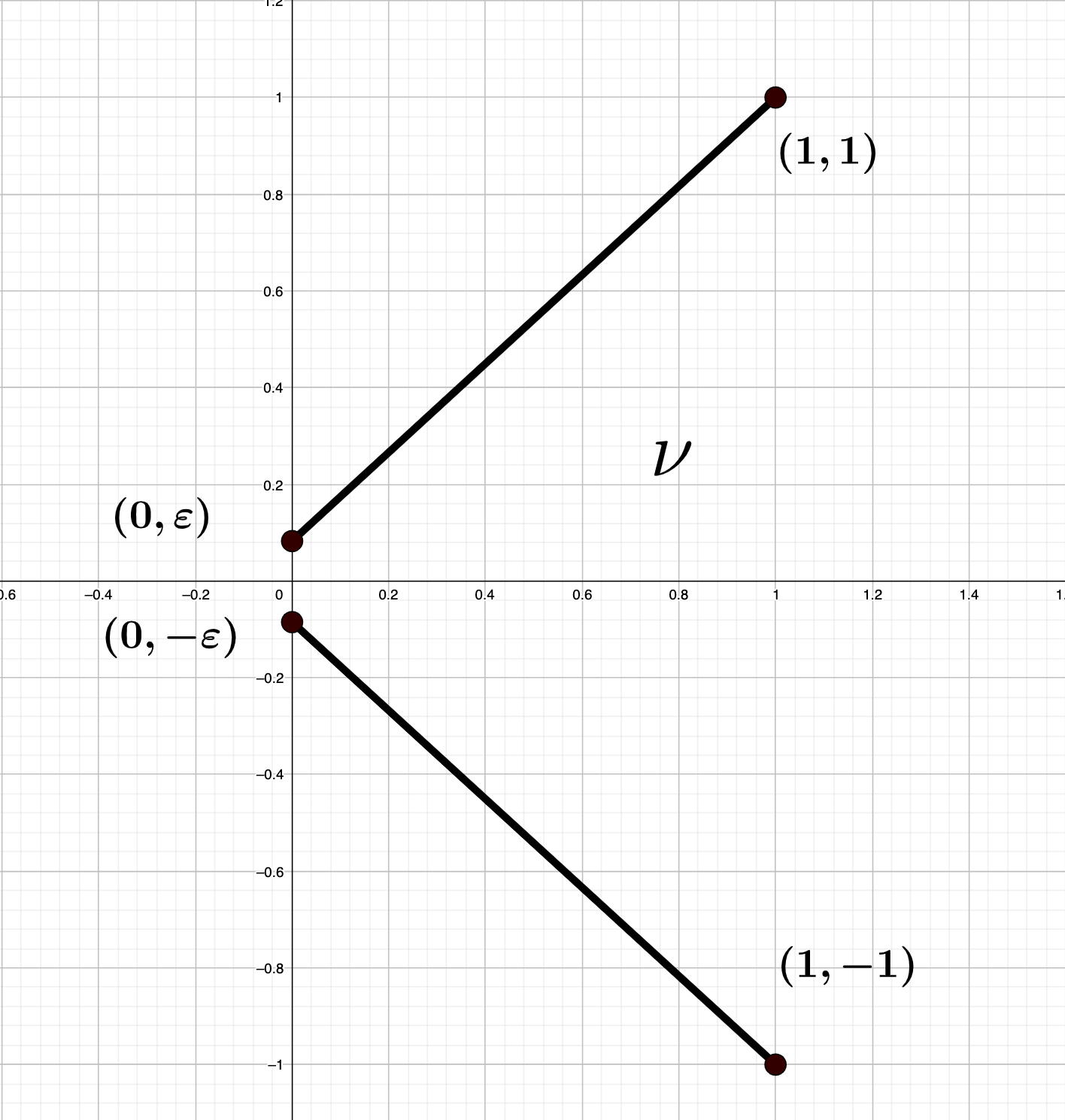}
          \caption{$\nu = \frac{1}{2}\delta_{(\epsilon,1)} + \frac{1}{2}\delta_{(-\epsilon,-1)}$}
          \label{fig:sub12}
        \end{subfigure}
        \caption{Visualization of $\mu$ and $\nu$.}
        \label{fig:test2}
    \end{figure}
\end{example}    
Let us now consider a financial market with an asset whose law is described by $\mu$, and another market with an asset whose law is described by $\nu$. Then under the Wasserstein distance the two markets are judged as being close to each other, while they clearly present very different features (random versus deterministic evolution, no-arbitrage versus arbitrage, etc.). It is also evident how optimization problems in the two situations would lead to very different decision making. This is a standard example to motivate the introduction of adapted distances, that instead can distinguish between the two models.

\subsection{Adapted empirical measure}
Let $\mu \in \mathcal{P}(\R^{dT})$, and let $(X^{(n)})_{n\in\N}$ be i.i.d. samples from $\mu$ defined on some probability space $(\Omega, \mathcal{F},\P)$. 
\begin{definition}[Empirical measure]
    For all $N\in\N$, we denote  by
    \[
    \mu^{N} = \frac{1}{N}\sum_{n=1}^{N}\delta_{X^{(n)}}
    \]
    the \textit{empirical measure} of $\mu$.
\end{definition}
Next we introduce the adapted empirical measure on $\R^{dT}$ analogously to the adapted empirical measure defined on $[0,1]^{dT}$ in \cite{backhoff2022estimating}.
\begin{definition}[(Uniform) adapted empirical measure]\label{def:uniform_adapted_empirical_measure}
    For $N \in \N$ and grid size $\Delta_N > 0$, we let $m = \lceil \frac{1}{\Delta_N}\rceil$ and consider the uniform partition $\hat{\Phi}$ of $\R^{dT}$ given by
    \begin{equation}
    \label{eq:def:Phi1}
        \hat{\Phi}^{N} = \left\{\hat{\mathcal{C}}_{\mathbf{z}}^{N} = \Big[0,\frac{1}{m}\Big]^{dT} + \frac{1}{m} \mathbf{z} , \mathbf{z} \in \mathbb{Z}^{dT}\right\}.
    \end{equation}
    Let $\hat{\Lambda}^N$ be the set of mid points of all cubes $\hat{\mathcal{C}}_{\mathbf{z}}^{N}$ in the partition $\hat{\Phi}^{N}$, and let $\hat{\varphi}^{N}\colon \R^{dT} \to \hat{\Lambda}^N$ map each cube $\hat{\mathcal{C}}_{\mathbf{z}}^{N}$ to its mid point \add{(points belonging to more than one cube can be mapped into any of them).}
    Then we denote by
    \[
    \hat{\mu}^{N} = \frac{1}{N}\sum_{n=1}^{N}\delta_{\hat{\varphi}^{N}(X^{(n)})}
    \]
    the \textit{(uniform) adapted empirical measure} of $\mu$ with grid size $\Delta_N$.
\end{definition}
Intuitively, the adapted empirical measure is constructed via the following procedure: we tile $\R^{dT}$ with cubes of size $(\frac{1}{m})^{dT}$ that form the partition $\hat{\Phi}^{N}$, then we project all points in each cube $\hat{\mathcal{C}}_{\mathbf{z}}^{N}$ to its mid point, see Figure~\ref{fig:sub1}. As a result, the push-forward measure obtained as empirical measure of the samples after projections is indeed the adapted empirical measure. 

On the other hand, if we tile $\R^{dT}$ with cubes of different sizes, then we obtain a non-uniform partition, on which we can similarly consider a projection and from it define a non-uniform adapted empirical measure. For the purpose of this paper, we consider the partition which is denser around the origin and coarser when far from it.
\
\begin{definition}[Non-uniform adapted empirical measure]\label{def:nonuniform_adapted_empirical_measure}
    For $N \in \N$ and grid size $\Delta_N > 0$, we let $m = \lceil \frac{1}{\Delta_N}\rceil$, $\mathcal{A}_0 = [-1,1]^{dT}$. Let $\mathcal{A}_{j} = [-2^j,2^{j}]^{dT} \add{\backslash} [-2^{j-1},2^{j-1}]^{dT}, j\in\N,$ be cubic rings and
    \begin{equation}
    \label{eq:def:Phi2}
        \check{\Phi}^{N} = \bigcup_{j=0}^{\infty}\check{\Phi}^{N,j} = \bigcup_{j=0}^{\infty}\left\{\check{\mathcal{C}}_{\mathbf{z}}^{N,j} = \Big[0,\frac{2^{j-1}}{m}\Big]^{dT} + \frac{2^{j-1}}{m} \mathbf{z} \subseteq \mathcal{A}_{j} , \mathbf{z} \in \mathbb{Z}^{dT}\right\}.
    \end{equation}
    Let $\check{\Lambda}^N$ be the set of mid points of all cubes $\check{\mathcal{C}}_{\mathbf{z}}^{N,j}$ in the partition $\check{\Phi}^{N}$, and let $\check{\varphi}^{N}\colon \R^{dT} \to \check{\Lambda}^N$ map each cube $\check{\mathcal{C}}_{\mathbf{z}}^{N,j}$ to its mid point. Then we denote by 
    \[
    \check{\mu}^{N} = \frac{1}{N}\sum_{n=1}^{N}\delta_{\check{\varphi}^{N}(X^{(n)})}
    \]
    the \textit{non-uniform adapted empirical measure} of $\mu$ with grid size $\Delta_N$.
\end{definition}
Intuitively, we first partition $\R^{dT}$ into cubic rings $\mathcal{A}_{j}$, $j=0,1,2,...$, and then partition each ring $\mathcal{A}_{j}$ into subcubes of size $(\frac{2^{j-1}}{m})^{dT}$, see Figure~\ref{fig:sub2}. By doing this, we avoid considering a
fine partition when we are far from the origin where less mass of the measure lies. 

Throughout the paper, we will use the following notation. We use the symbol $\wedge$ for notations related to the (uniform) adapted empirical measure, such as $\hat{\mu}^{N}, \hat{\Phi}^{N}, \hat{\Phi}^{N}_{t}, \hat{\varphi}^{N}, \hat{\Lambda}^{N}$, and the symbol $\vee$ for notations related to the non-uniform adapted empirical measure, such as $\check{\mu}^{N}, \check{\Phi}^{N}, \check{\Phi}^{N}_{t}, \check{\varphi}^{N}, \check{\Lambda}^{N}$. If a statement holds for both uniform and non-uniform cases, we instead adopt the notations $\bar{\mu}^{N},\Phi^{N}, \Phi^{N}_{t},\Lambda^{N}_{t},\varphi^{N}_{t}$, \add{where we use the bar symbol} on the measure to avoid duplication of the notation $\mu^{N}$, which we use for the classical empirical measure. \add{We let $\Delta_N =N^{ -\frac{1}{dT}}$, for all $N\in\N$.}
\begin{figure}[ht]
    \centering
    \begin{subfigure}{.4\textwidth}
      \centering
      \includegraphics[width=0.6\linewidth]{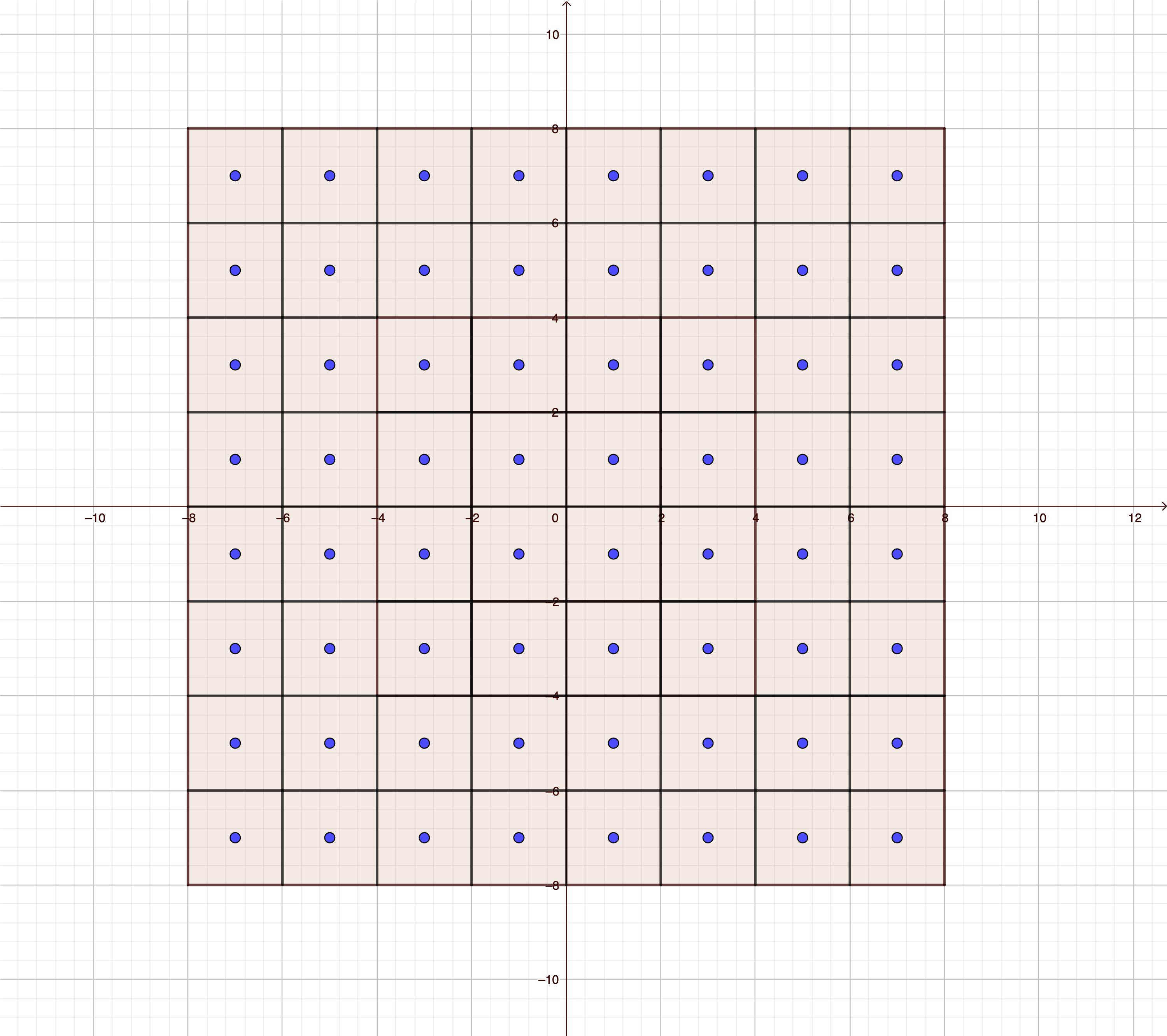}
      \caption{Uniform grid}
      \label{fig:sub1}
    \end{subfigure}
    \begin{subfigure}{.4\textwidth}
      \centering
      \includegraphics[width=0.6\linewidth]{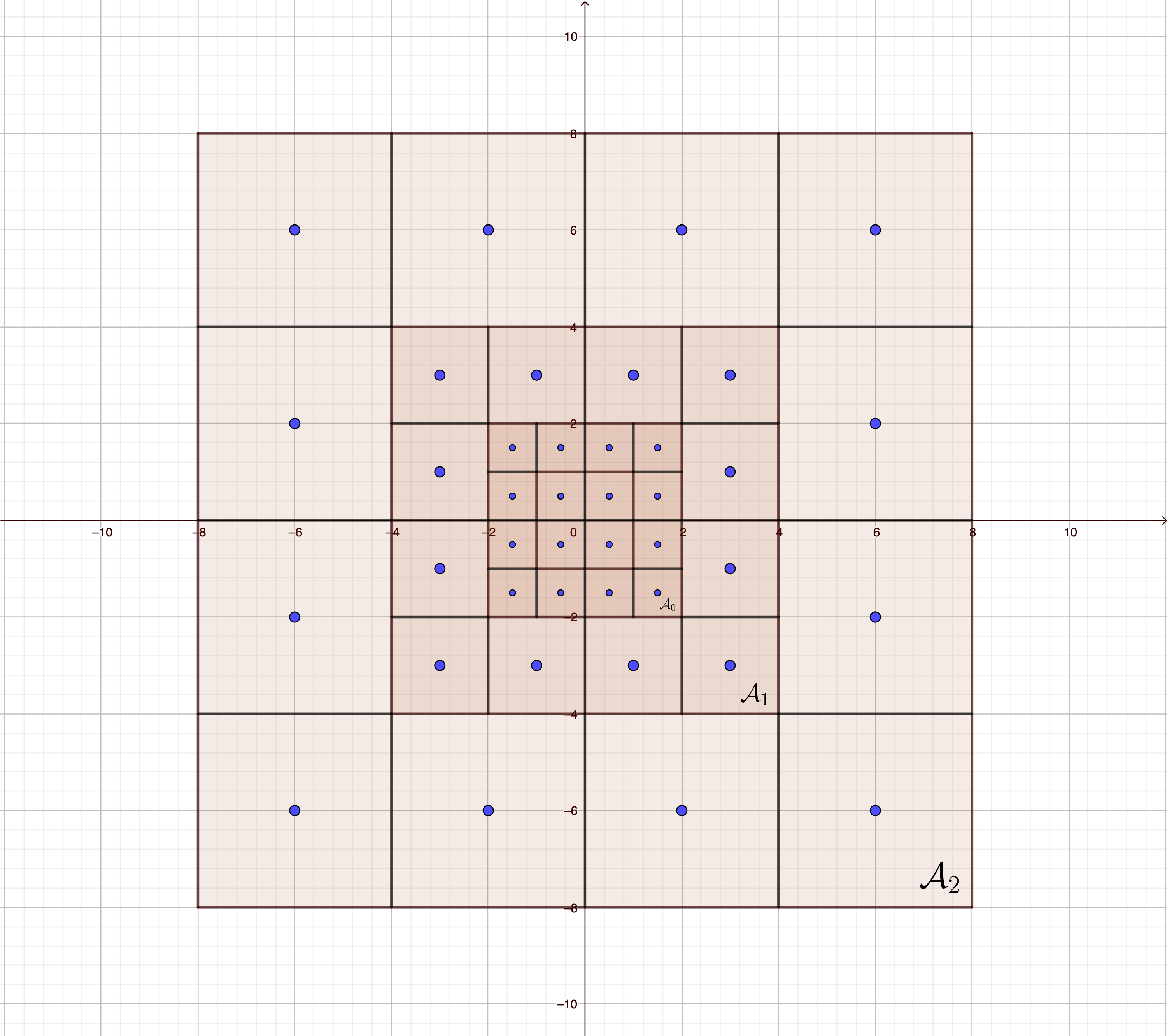}
      \caption{Non-uniform gird}
      \label{fig:sub2}
    \end{subfigure}
    \caption{Visualization of uniform grid and non-uniform grid.}
    \label{fig:test}
\end{figure}
\subsection{Main results}
Our first main theorem is the following consistency result: the adapted empirical measure converges almost surely to the underlying measure $\mu$ under adapted Wasserstein distance, as long as $\mu$ is integrable; see Section~\ref{sec:asconvergence} for the proof. 
\begin{theorem}[Almost sure convergence]
\label{thm:asconvergence}
    Let $\mu \in \mathcal{P}(\R^{dT})$ be integrable. Then 
    \begin{equation*}
        \lim_{N \to \infty} \AWD(\mu,\hat{\mu}^{N}) = 0\quad \P\text{-a.s.}
    \end{equation*}
    and
    \begin{equation*}
        \lim_{N \to \infty} \AWD(\mu,\check{\mu}^{N}) = 0\quad \P\text{-a.s.}
    \end{equation*}
\end{theorem}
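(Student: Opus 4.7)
The plan is to split the error via triangle inequality into a deterministic projection part and a random discretization part, and then handle the unbounded support through a truncation reducing the problem to the compact-support setting of \cite{backhoff2022estimating}. Writing $\varphi^{N}$ for either $\hat{\varphi}^{N}$ or $\check{\varphi}^{N}$, the starting decomposition is
\[
\AWD(\mu, \bar{\mu}^{N}) \;\le\; \AWD(\mu, \varphi^{N}_{\#}\mu) \;+\; \AWD(\varphi^{N}_{\#}\mu, \bar{\mu}^{N}).
\]

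For the deterministic projection term I would argue case by case. In the uniform case, $\hat{\varphi}^{N}$ factors coordinate-wise in time, so the synchronous coupling $(\mathrm{id},\hat{\varphi}^{N})_{\#}\mu$ is bi-causal and yields $\AWD(\mu,\hat{\varphi}^{N}_{\#}\mu) \le \int \|x-\hat{\varphi}^{N}(x)\|\,\mu(dx) \le T\sqrt{d}/(2m)$, which vanishes deterministically. In the non-uniform case the ring index depends on $\|x\|_{\infty}$ over the entire path, so the synchronous coupling is \emph{not} bi-causal; I would instead build a bi-causal proxy coupling that at each time $t$ projects $X_t$ at the resolution dictated by the causal quantity $\max_{s\le t}\|X_s\|_{\infty}$, estimate its cost pointwise by $C(1+\|x\|)/m$, and finally bound $\AWD$ between this proxy and $\check{\varphi}^{N}_{\#}\mu$ by an additional local transport argument. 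Integrability of $\mu$ and dominated convergence then give $\AWD(\mu,\check{\varphi}^{N}_{\#}\mu)\to 0$.

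For the random discretization term, I would truncate the support: fix $R>0$, set $K_R = [-R,R]^{dT}$, and let $\mu^{R}$ be the renormalized restriction of $\mu$ to $K_R$. Denote by $\bar{\mu}^{N,R}$ the adapted empirical measure built from the sub-sample falling in $K_R$; these are i.i.d.\ draws from a compactly supported measure, and the grid restricted to $K_R$ is of the same type as in the compact setting of \cite{backhoff2022estimating}, so their theorem yields $\AWD(\mu^{R},\bar{\mu}^{N,R})\to 0$ almost surely. The discrepancy $\AWD(\bar{\mu}^{N},\bar{\mu}^{N,R})$ is controlled by the tail empirical first moment $\tfrac{1}{N}\sum_{n:\, X^{(n)}\notin K_R}\|X^{(n)}\|$, which by the strong law of large numbers converges almost surely to $\int_{K_R^c}\|x\|\,\mu(dx)$, and $\AWD(\mu,\mu^{R})$ is estimated by the analogous deterministic tail quantity. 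Sending first $N\to\infty$ and then $R\to\infty$ finishes the proof, since integrability of $\mu$ forces the tail quantities to vanish. The main obstacle I anticipate is the bi-causal bookkeeping: on one hand, gluing the bulk and tail couplings into an honestly bi-causal coupling of $\mu$ with $\bar{\mu}^{N}$ whose total cost matches the sum of the above estimates; on the other hand, constructing the non-uniform proxy coupling and verifying that it is sufficiently close in $\AWD$ to the actual projection $\check{\varphi}^{N}$, whose ring dependence violates time-causality.
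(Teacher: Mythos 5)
Your decomposition $\AWD(\mu,\bar{\mu}^{N})\le \AWD(\mu,\varphi^{N}_{\#}\mu)+\AWD(\varphi^{N}_{\#}\mu,\bar{\mu}^{N})$ puts the burden of the proof on a term that is \emph{not} small, and the key claim you use to bound it is false. The coupling $(\mathrm{id},\hat{\varphi}^{N})_{\#}\mu$ is causal but in general \textbf{not} bi-causal, even in the uniform case where $\hat{\varphi}^{N}$ factors coordinate-wise. The obstruction is not the lack of a product structure over $t$, but the non-injectivity of $\hat{\varphi}^{N}$: under this Monge coupling, the disintegration $\pi_{x_{1:t},y_{1:t}}$ has $y_{t+1}$-marginal $(\hat{\varphi}^{N}_1)_{\#}\mu_{x_{1:t}}$, whereas the target kernel $\nu_{y_{1:t}}$ with $\nu=\hat{\varphi}^{N}_{\#}\mu$ is $(\hat{\varphi}^{N}_1)_{\#}\mu_G$, the projection of the \emph{averaged} kernel over the whole cell $G\ni x_{1:t}$; these differ unless $\mu_{x_{1:t}}$ is essentially constant across $G$. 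Consequently the bound $\AWD(\mu,\hat{\varphi}^{N}_{\#}\mu)\le T\sqrt{d}/(2m)$ fails. Concretely, with $d=1$, $T=2$, grid step $1/2$, and $\mu=\tfrac12\delta_{(0,1)}+\tfrac12\delta_{(1/4,-1)}$, one has $\hat{\varphi}^{N}_{\#}\mu=\tfrac12\delta_{(1/4,3/4)}+\tfrac12\delta_{(1/4,-3/4)}$; the unique time-$1$ coupling forces full mixing of the conditional laws, so $\AWD(\mu,\hat{\varphi}^{N}_{\#}\mu)=9/8$, while $\int\|x-\hat{\varphi}^{N}(x)\|\mu(dx)=3/8$ and $T\sqrt{d}/(2m)=1/2$. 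More generally, showing $\AWD(\mu,\varphi^{N}_{\#}\mu)\to 0$ for arbitrary integrable $\mu$ with no kernel regularity is essentially as hard as the compact-support theorem itself (it requires a Lusin-type density argument), so nothing is gained by factoring it out.

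The truncation step also does not quite work as written. You renormalize $\mu$ on $K_R$, but the renormalized restriction has kernels that are reweighted relative to those of $\mu$, and there is no clean bi-causal coupling of $\mu$ with $\mu\vert_{K_R}/\mu(K_R)$ whose cost is just the tail first moment; similarly, $\bar{\mu}^{N}$ and the adapted empirical measure of the sub-sample in $K_R$ have different atom sets and weights, so "controlled by the tail empirical first moment" needs an explicit bi-causal construction you have not supplied. The paper avoids both issues by a different device: it defines a mass-preserving (not renormalizing) collapse map $\phi$ that is the identity on $K_\epsilon$ and sends $K_\epsilon^{c}$ to a single grid-aligned point $x^\epsilon$, sets $\nu=\phi_{\#}\mu$, and writes $\AWD(\mu,\bar{\mu}^{N})\le \AWD(\mu,\nu)+\AWD(\nu,\bar{\nu}^{N})+\AWD(\bar{\nu}^{N},\bar{\mu}^{N})$, where $\bar{\nu}^{N}$ uses the \emph{same} samples pushed through $\phi$. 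Two ingredients make this work: (i) the coupling $(\mathrm{id},\phi)_{\#}\mu$ is genuinely bi-causal because $\phi$ acts causally in $t$ and the disintegration is constructed stepwise; (ii) $R_\epsilon$ is chosen as a power of $2$ so that $\phi\circ\varphi^{N}=\varphi^{N}\circ\phi$, whence $\bar{\nu}^{N}=\phi_{\#}\bar{\mu}^{N}$ and $\AWD(\bar{\mu}^{N},\bar{\nu}^{N})$ reduces cleanly to a tail empirical moment. Your plan has no analogue of the commuting property, which is precisely what closes the "bi-causal bookkeeping" gap you yourself flag at the end.
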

It is worth noticing that integrability is the mildest assumption here because at least we need $\AWD(\mu,\mu^{N})$ to be finite. In order to further quantify the speed of convergence for $\mu \in \calP(\R^{dT})$, integrability is not enough anymore. Then we need two more regularity assumptions: the uniform moment noise condition and the uniform Lipschitz kernel condition. 

For $p\geq 1$, we denote by $M_{p}(\mu) = \int \Vert x\Vert^{p}\mu(dx)$ the $p$-th moment of $\mu$, and for $\alpha,\gamma > 0$, we denote by $\mathcal{E}_{\alpha,\gamma}(\mu) = \int \exp\Big(\gamma\Vert x\Vert^{\alpha}\Big)\mu(dx)$ the $(\alpha,\gamma)$-exponential moment of $\mu$. Besides the moment condition in \cite{fournier2015rate} on the underlying measure, we would also assume some moment condition on kernels. The simplistic way to require this, would be to assume uniform moment of kernels i.e. $\sup_{x_{1:t} \in \R^{dt}}M_{p}(\mu_{x_{1:t}}) < \infty$
for all $t=1,\dots,T-1$. However, this is such a strong condition that even the discretized Black-Scholes model (with simplest dynamic: $X_{t+1} = X_t + X_t \Delta W_t$) fails to satisfy! Motivated by this, we define the uniform moment noise assumption based on kernel decomposition. We start by noticing that, for every $\mu\in\mathcal{P}(\R^{dT})$ and for all $t=1,\dots,T-1$ and $x_{1:t}\in\R^{d}$, the kernel $\mu_{x_{1:t}}$ admits the following decomposition: 
\[
    \mu_{x_{1:t}} \sim f_t(x_{1:t}) + \sigma_{t}(x_{1:t})\cdot \varepsilon_{x_{1:t}},
\]
where $f_{t}\colon \R^{dt} \to \R^{d}$, $\sigma_t\colon \R^{dt}\to\R_{\geq 0}$, and $\varepsilon_{x_{1:t}}$ is an $\R^d$-valued random variable depending on $x_{1:t}$. Indeed, to see this it suffices to consider the following decomposition of $(X_t)_{t=1}^{T} \sim \mu$, for any $p\geq 1$ s.t. $M_{p}(\mu) < \infty$:
\begin{equation}  \label{eq:kernel_decomposition}
    X_{t+1} = \E[X_{t+1}\vert\mathcal{F}^X_t] + \E\Big[\big|X_{t+1} - \E[X_{t+1}\vert\mathcal{F}^X_t]\big|^{p} \Big\vert \mathcal{F}^X_t\Big]^{1/p}  \frac{X_{t+1} - \E[X_{t+1}\vert\mathcal{F}^X_t]}{\E\Big[\big|X_{t+1} - \E[X_{t+1}\vert\mathcal{F}^X_t]\big|^{p} \Big\vert \mathcal{F}^X_t \Big]^{1/p}},
\end{equation}
where ${(\mathcal{F}^X_t)}_t$ is the filtration generated by $(X_t)_t$.
\begin{definition}[Uniform moment \add{noise} kernel]\label{def:uniform_kernels}
    Let $\mu\in\mathcal{P}(\R^{dT})$, $r \geq 0$, $p \geq 1$ (resp. $\alpha,\gamma > 0$). We say that $\mu$ has uniform $p$-th moment (resp. uniform $(\alpha,\gamma)$-exponential moment) \add{noise} kernel of growth $r$ if there exist $c>0$, $f_{t}\colon \R^{dt} \to \R^{d}$, $\sigma_t\colon \R^{dt}\to\R_{\geq 0}$ and $\lambda_{x_{1:t}} \in \calP(\R^{d})$ s.t., for all $t = 1,\dots,T-1$, $x_{1:t}\in\R^{dt}$,
    \begin{equation}
    \label{eq:def:uniform_kernels}
        \mu_{x_{1:t}} \sim f_t(x_{1:t}) + \sigma_{t}(x_{1:t})\cdot \varepsilon_{x_{1:t}},\quad  \varepsilon_{x_{1:t}}\sim \lambda_{x_{1:t}},
    \end{equation}
    with $\sigma_{t}(x_{1:t}) \leq c\Vert x_{1:t}\Vert^r$ and $\sup_{x_{1:t} \in \R^{dt}}M_{p}(\lambda_{x_{1:t}}) < \infty$ (resp. $\sup_{x_{1:t} \in \R^{dt}}\mathcal{E}_{\alpha,\gamma}(\lambda_{x_{1:t}}) < \infty$).
\end{definition}
{\color{black}
\begin{remark}
    If we consider $f_t=0$ and $\sigma_t=1$ for all $t=1,\dots,T-1$, then the uniform moment noise kernel condition boils down to the uniform moment kernel condition, i.e. $\sup_{x_{1:t} \in \R^{dt}}M_{p}(\mu_{x_{1:t}}) < \infty$ for all $t=1,\dots,T-1$. In this case, one can directly follow the proof in \cite{backhoff2022estimating} without involving the shifting and scaling kernels technique, but still using different estimations than those in \cite{backhoff2022estimating}, in order to deal with the infinite sum of error terms when the support is not compact.
\end{remark}
}
Moreover, by the kernel decomposition \eqref{eq:kernel_decomposition}, we can directly see from Definition~\ref{def:uniform_kernels} that as long as $(X_t)_{t=1}^{T} \sim \mu$ satisfies 
\begin{equation*}
    \sup_{x_{1:t}\in\R^{dt}}M_{p}\left(\frac{X_{t+1} - \E[X_{t+1}|X_{1:t} = x_{1:t}]}{\Vert x_{1:t} \Vert^{r} }\right) < \infty,
\end{equation*}
then $\mu$ has uniform $p$-th moment noise kernels of growth $r$. This gives a general way to test whether a distribution has uniform moment noise kernel. Next, we introduce the uniform Lipschitz kernels in the same fashion of \cite{pflug2016empirical, backhoff2022estimating}.
\begin{definition}
    We say that $\mu\in\mathcal{P}(\R^{dT})$ has $L$-Lipschitz kernels if there exists a disintegration of $\mu$ such that, for all $t=1,\dots,T-1$,
    \begin{equation*}
        \R^{dt} \ni (x_1,\dots,x_t) \mapsto \mu_{x_{1:t}} \in \mathcal{P}(\R^{d})
    \end{equation*}
    is $L$-Lipschitz continuous on $\R^{dt}$, where $\mathcal{P}(\R^{d})$ is endowed with Wasserstein distance.
\end{definition}
We show below two examples where both the uniform moment \add{noise} and the uniform Lipschitz kernel conditions hold.
\begin{example}[Discretized Black Scholes model]
    Let $\mu\in \calP(\R^{dT})$ be the law of the discretized Black Scholes model, satisfying $\mu_1 = \delta_{\mathbf{1}}$ and
    \[
        \mu_{x_{1:t}} \sim \add{x_t} + \sqrt{\Delta t}\,\add{x_t}\cdot \varepsilon_{t},
    \]  
    where $\varepsilon_{t} \sim \calN(0,\mathbf{I}_d)$ i.i.d. for all $t = 1,\dots, T-1$. Notice that
    \begin{equation*}
        \begin{split}
            \WD(\mu_{x_{1:t}}, \mu_{x'_{1:t}}) \leq \WD_2(\mu_{x_{1:t}}, \mu_{x'_{1:t}}) &\leq \sqrt{\Vert \add{x_t} - \add{x'_t}\Vert^2 + \Vert \sqrt{\Delta t}\add{x_t} - \sqrt{\Delta t}\add{x'_t} \Vert^2 \mathrm{Trace}(\mathbf{I}_d)}\\
            &\leq (1+d\Delta t)^{\frac{1}{2}}\Vert \add{x_t} - \add{x'_t}\Vert \leq \add{(1+d\Delta t)^{\frac{1}{2}}\Vert x_{1:t} - x'_{1:t}\Vert}.
        \end{split}
    \end{equation*}
    Then, for all $p,q > 0$, $\mu$ has finite $q$-th moment and $(1+dt)^{\frac{1}{2}}$-Lipschitz, uniform $p$-th moment kernels of growth $1$.
\end{example}
{\color{black}
    \begin{example}[Discretized SDE]
    \label{ex:sde}
    Let us consider the following SDE:
    \begin{equation*}
        dX_t  = \mu(t,X_t)dt + \sigma(t,X_t) dW_t,
    \end{equation*}
    where $\mu$ and $\sigma$ are Lipschitz w.r.t. the second argument. The corresponding discretized SDE then reads as
    \begin{equation*}
        X_{t+1} = X_t + \mu(t,X_t)\Delta t + \sigma(t,X_t) \sqrt{\Delta t} \epsilon_t,
    \end{equation*}
    where $(\epsilon_t)_t$ are independent standard normal random variables. Notice that, as $\mu$ and $\sigma$ are Lipschitz, they have at most linear growth. We conclude that the solution of the above discretized SDE has uniform exponential moment noise of growth $1$ and Lipschitz kernels. 
    \end{example}
    \begin{remark}
        Many other benchmark models in mathematical finance satisfy uniform moment noise kernels, e.g. the GARCH model and the Heston model with clipped volatility, which could be checked similarly as in Example~\ref{ex:sde}. 
    \end{remark}}
Recall that under the classical Wasserstein distance, the rates are as follows: 
\begin{equation*}
\E[\WD_1(\mu,\mu^N)] \leq \left\{\begin{aligned}
    &CN^{-\frac{1}{2}},\quad  &d=1\\ 
    &CN^{-\frac{1}{2}}\log(N) ,\quad &d=2\\
    &CN^{-\frac{1}{d}} ,\quad  &d\geq 3
    \end{aligned}\right\} \leq  CN^{-\frac{1}{\calD(d)}},
\end{equation*}
where  $\calD\colon\N \to \N$ is defined as
\begin{equation*}
\calD(d) = \left\{\begin{aligned}
    &d+1, &d=1,2\\ 
    &d, &d\geq 3
    \end{aligned}\right. .
\end{equation*}
Now, we are ready to state our main speed of convergence result. Since moment conditions for the same convergence rate are slightly different between the uniform and non-uniform adapted empirical measures, we consider the following two settings. 
\begin{setting}\label{setting:1}
    Let $r\geq 0$, $p \geq 1$,   $q > \max\left\{\frac{\add{\calD(d)}}{\add{\calD(d)}-1}(r + T -1),rp + \add{\calD(d)}(T-1)(p-1)\right\}$, and let $\mu \in \mathcal{P}(\R^{dT})$ with finite $q$-th moment and $L$-Lipschitz, uniform $p$-th moment kernels of growth $r$. \add{Moreover, let $L_f >0$ and assume that $f_t$ in \eqref{eq:def:uniform_kernels} is $L_f$-Lipschitz for all $t=1,\dots,T-1$.}
\end{setting}
\begin{setting}\label{setting:2}
    Let $r\geq 0$, $p \geq 1$,  $q > \max\left\{\frac{r\add{\calD(d)}}{\add{\calD(d)}-1},(r+1)p\right\}$, and let $\mu \in \mathcal{P}(\R^{dT})$ with finite $q$-th moment and $L$-Lipschitz, uniform $p$-th moment kernels of growth $r$. \add{Moreover, let $L_f >0$ and assume that $f_t$ in \eqref{eq:def:uniform_kernels} is $L_f$-Lipschitz for all $t=1,\dots,T-1$.}
\end{setting}
First we estimate the speed of the expected error; see Section~\ref{sect:mom_est} for the proof.

\begin{theorem}[Moment estimate]
    \label{thm:moment_estimate}
    Let $p,q > 1$.
    \begin{enumerate}[(i)]
        \item Under Setting \ref{setting:1}, there exists a constant $C>0$ s.t., for all $N\in\N$,
    \begin{equation*}
        \E\big[\AWD(\mu,\hat{\mu}^{N})\big] \leq CN^{-\frac{1}{\add{\calD(d)}T}} + CN^{-\frac{p-1}{pT}}.
    \end{equation*}
        \item Under Setting \ref{setting:2},  there exists a constant $C>0$ s.t., for all $N\in\N$,
    \begin{equation*}
        \E\big[\AWD(\mu,\check{\mu}^{N})\big] \leq CN^{-\frac{1}{\add{\calD(d)}T}} + CN^{-\frac{p-1}{pT}}.
    \end{equation*}
    \end{enumerate}
\end{theorem}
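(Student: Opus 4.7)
The strategy is to follow the template of Backhoff et al.\ and split the error through the intermediate measure $\tilde{\mu}^N := \varphi^N_{\#}\mu$, the pushforward of $\mu$ under the projection onto the grid $\Lambda^N$. By the triangle inequality for $\AWD$,
\[
\AWD(\mu, \bar{\mu}^N) \;\leq\; \AWD(\mu, \tilde{\mu}^N) \;+\; \AWD(\tilde{\mu}^N, \bar{\mu}^N),
\]
where the first term is a deterministic discretization error and the second is an empirical error between two measures already supported on $\Lambda^N$. Both parts need to be handled in the uniform and non-uniform settings, and the key is that the moment hypotheses of Settings \ref{setting:1} and \ref{setting:2} are tailored precisely so that each of these two pieces fits under one of the two rates in the bound.

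For the discretization term, I would use the deterministic coupling $x \mapsto \varphi^N(x)$, which is automatically bi-causal since $\varphi^N$ is a measurable function. In the uniform case each cell has diameter of order $\Delta_N$ in every coordinate, giving $\AWD(\mu,\hat{\mu}^N_{\text{proj}}) \lesssim T\Delta_N = O(N^{-1/(dT)})$, which is absorbed into $N^{-1/(\calD(d)T)}$. In the non-uniform case the cell diameter grows like $\Vert x\Vert$ on the ring $\calA_j$, so the cost must be integrated against $\mu$; the moment assumption $q > r\calD(d)/(\calD(d)-1)$ (appearing in Setting \ref{setting:2}) is exactly what is needed to bound this tail sum by $O(N^{-1/(\calD(d)T)})$.

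For the empirical term, I would proceed by induction on $T$. Applying the dynamic-programming representation of $\AWD$ at time $1$, one reduces the problem to a Fournier-Guillin-type estimate on first marginals (which contributes the $N^{-1/\calD(d)}$ rate, amplified to $N^{-1/(\calD(d)T)}$ after iterating through $T$ layers) plus an expectation over $\AWD$ between conditional laws at later times. To handle each conditional law on a given cell $\mathcal{C}$, I would exploit the kernel decomposition $\mu_{x_{1:t}} \sim f_t(x_{1:t}) + \sigma_t(x_{1:t})\cdot \varepsilon_{x_{1:t}}$: subtracting the Lipschitz drift $f_t$ (only introducing an error controlled by $L_f$ and the diameter of the previous cell) and rescaling by $\sigma_t$ (of controlled growth $\le c\Vert x_{1:t}\Vert^r$) normalizes the kernel to a family with uniformly bounded $p$-th moment, to which Fournier-Guillin applies uniformly. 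The Lipschitz kernel hypothesis then controls the error introduced by transporting the conditioning from a grid midpoint to any nearby point of $\mathcal{C}$.

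The main obstacle is summing the Fournier-Guillin contributions over infinitely many cells while keeping track of the shift/scale factors $\Vert x_{1:t}\Vert^r$. Each cell contributes roughly (probability of cell)$^{1-1/\calD(d)}$ times a local Wasserstein rate depending on $\sigma_t$; multiplying and summing over cells produces a series whose convergence demands exactly the moment thresholds in Settings \ref{setting:1} and \ref{setting:2}. The resulting $N^{-(p-1)/(pT)}$ summand encodes how heavy-tailed the normalized noise is permitted to be, while $N^{-1/(\calD(d)T)}$ comes from the marginal empirical rate. The non-uniform case survives under weaker moments because the adaptive grid spacing of $\check{\Phi}^N$ partially absorbs the factor $\Vert x_{1:t}\Vert^r$ inside each ring $\calA_j$, explaining the $T$-independence of Setting \ref{setting:2}.
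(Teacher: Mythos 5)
Your overall plan --- pass to the projected true law $\tilde{\mu}^N := \varphi^N_{\#}\mu$, control the discretization error deterministically, then compare $\tilde{\mu}^N$ and $\bar{\mu}^N$ via Fournier--Guillin applied to normalized kernels --- has the right shape, and the later ingredients (subtracting the Lipschitz drift $f_t$, dividing by $\sigma_t$, summing a geometric series controlled by the moment thresholds) track the paper's actual argument closely. However, the first step contains a genuine error.

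You claim the deterministic coupling $\pi = (\mathbf{id}, \varphi^N)_{\#}\mu$ is ``automatically bi-causal since $\varphi^N$ is a measurable function.'' This is false: adaptedness of $\varphi^N$ gives causality, but bi-causality additionally requires $\pi_{x_{1:t},y_{1:t}} \in \cpl(\mu_{x_{1:t}}, (\tilde{\mu}^N)_{y_{1:t}})$. For the Monge coupling, the disintegration $\pi_{x_{1:t},y_{1:t}}$ has second marginal $(\varphi^N_{t+1})_{\#}\mu_{x_{1:t}}$ --- the projected kernel at the specific path $x_{1:t}$ --- whereas the kernel of $\tilde{\mu}^N$ at the grid point $y_{1:t}$ is $(\varphi^N_{t+1})_{\#}\mu_G$, the projection of the \emph{cell-averaged} kernel over $G = (\varphi^N_t)^{-1}(y_{1:t})$. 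These two measures disagree whenever $x_{1:t}\mapsto \mu_{x_{1:t}}$ varies inside $G$, which is the generic situation. Consequently the inequality $\AWD(\mu,\tilde{\mu}^N) \le \int\|x-\varphi^N(x)\|\,\mu(dx)$ does not follow from the Monge coupling, and the $O(\Delta_N)$ rate you assert for the discretization term cannot be obtained from pathwise displacement alone --- it genuinely requires the $L$-Lipschitz-kernel hypothesis to control the gap $\WD(\mu_{x_{1:t}},\mu_G) \le L\,\mathrm{diam}(G)$. That hypothesis is available in Settings~\ref{setting:1} and~\ref{setting:2}, so the proof is fixable, but you cannot skip the ingredient.

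For contrast, the paper avoids the intermediate $\tilde{\mu}^N$ altogether. Its Lemma~\ref{lem:helper} invokes the dynamic-programming estimate (Lemma~3.1 of \cite{backhoff2022estimating}), which bounds $\AWD(\mu,\bar{\mu}^N)$ directly by $C\WD(\mu_1,\bar{\mu}^N_1) + C\sum_{t}\sum_{G\in\Phi^N_t}\int_G \WD(\mu_{x_{1:t}},\bar{\mu}^N_{x_{1:t}})\,\bar{\mu}^N(dx_{1:t})$, after which the Lipschitz-kernel assumption is applied at the level of plain Wasserstein triangle inequalities to replace $\mu_{x_{1:t}}$ with $\mu_G$. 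The end target $\WD(\mu_G,\mu_G^N)$ is the same as in your sketch, so only the opening move differs. One smaller misattribution: you say the threshold $q > r\calD(d)/(\calD(d)-1)$ in Setting~\ref{setting:2} is what controls the discretization tail sum, but that sum only needs $q>1+r$ (see \eqref{eq:lem:mean_shift_amp:6}); the sharper threshold is what makes the geometric series of per-cell Fournier--Guillin contributions converge, as in Lemma~\ref{lem:common_estimate} with $a = 1/\calD(d)$.
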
  
When $p > 1 + \frac{1}{\add{\calD(d)}-1}$, $N^{-\frac{1}{\add{\calD(d)}T}}$ dominates the right hand side in the two above estimates. Next, we estimate the speed of deviation error; see Section~\ref{sec:concentration1} for the proof.
\begin{theorem}[Concentration inequality I]
    \label{thm:first_concentration}
    Let $p,q > 2$ and $\mathrm{rate}(N) = \mathcal{O}(N^{-\frac{1}{\add{\calD(d)}T}})$.
    \begin{enumerate}[(i)]
        \item Assume Setting \ref{setting:1}. Then, for all $\epsilon \in (0,p-2)$, there exists a constant $C > 0$ s.t., for all $N \in \N$ and $x > 0$,
        \begin{equation*}
        \P\Big[\AWD(\mu,\hat{\mu}^{N}) \geq x + \mathrm{rate}(N)\Big] \leq \add{CN^{-\frac{p-\epsilon}{\add{\calD(d)}T}}x^{-p+\epsilon}}.
        \end{equation*}
        \item Assume Setting \ref{setting:2}. Then, for all $\epsilon \in (0,p-2)$, there exists a constant $C > 0$ s.t., for all $N \in \N$ and $x > 0$,
        \begin{equation*}
        \P\Big[\AWD(\mu,\check{\mu}^{N}) \geq x + \mathrm{rate}(N)\Big] \leq \add{CN^{-\frac{p-\epsilon}{\add{\calD(d)}T}}x^{-p+\epsilon}} + C N^{-\frac{p}{2}} x^{-p}.
        \end{equation*}
    \end{enumerate}
\end{theorem}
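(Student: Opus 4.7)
The plan is to reduce the tail bound to a higher-moment estimate and then apply Markov's inequality. The key technical step is to upgrade the first-moment bound of Theorem~\ref{thm:moment_estimate} to a $(p-\epsilon)$-th moment estimate of the form
\begin{equation*}
    \E\bigl[\AWD(\mu, \bar{\mu}^{N})^{p-\epsilon}\bigr] \leq C N^{-\frac{p-\epsilon}{\calD(d) T}},
\end{equation*}
after which Markov's inequality applied to the non-negative random variable $\AWD(\mu, \bar{\mu}^{N})$ immediately yields the bound in (i):
\begin{equation*}
    \P\bigl[\AWD(\mu, \bar{\mu}^{N}) \geq x + \mathrm{rate}(N)\bigr] \leq \P\bigl[\AWD(\mu, \bar{\mu}^{N}) \geq x\bigr] \leq C N^{-\frac{p-\epsilon}{\calD(d) T}} x^{-p+\epsilon}.
\end{equation*}

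To obtain the higher-moment estimate, I would follow the triangle-inequality split
\begin{equation*}
    \AWD(\mu, \bar{\mu}^{N}) \leq \AWD\bigl(\mu, \mu \circ (\varphi^{N})^{-1}\bigr) + \AWD\bigl(\mu \circ (\varphi^{N})^{-1}, \bar{\mu}^{N}\bigr),
\end{equation*}
where $\mu \circ (\varphi^{N})^{-1}$ is the projection of $\mu$ onto the grid $\Lambda^{N}$. The first term is deterministic and bounded by the cell diameter weighted against the tail of $\mu$, which is absorbed by the finite $q$-th moment of $\mu$ in Settings~\ref{setting:1}--\ref{setting:2}. The second term reduces, via the nested / dynamic-programming representation of $\AWD$, to cell-by-cell contributions of the binomial empirical counts $N_{\mathbf{z}} = \sum_{n=1}^{N}\mathbf{1}_{\{X^{(n)} \in \mathcal{C}_{\mathbf{z}}\}}$ and of the empirical conditional kernels inside each cell. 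Running the argument of Theorem~\ref{thm:moment_estimate} at the $(p-\epsilon)$-th power instead of the first, the sharp rate $N^{-1/(\calD(d)T)}$ should persist, provided the uniform $p$-th moment noise condition of Definition~\ref{def:uniform_kernels} together with the Lipschitz assumption on $f_t$ in both settings is carried through each time level $t$ of the disintegration.

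For (ii), the additional term $CN^{-p/2}x^{-p}$ in the non-uniform bound comes from the randomness of the scale of the coarse cells in the outer rings $\mathcal{A}_{j}$, whose worst-case diameters grow geometrically. I expect it to be produced by isolating a sample-mean term of the form $\frac{1}{N}\sum_{n=1}^{N} g(\|X^{(n)}\|)$ and applying a Rosenthal-type inequality, which gives the $N^{-p/2}$ central-limit-type scaling whenever $g(\|X\|) \in L^{p}(\mu)$, as ensured by the sharper moment assumption $q > (r+1)p$ in Setting~\ref{setting:2}.

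The main obstacle is propagating the $(p-\epsilon)$-th moment through the nested decomposition without losing the sharp exponent in $N$: one must carefully balance the bias contribution from cells far from the origin (where the moment condition on $\mu$ must dominate the tail) against the variance contribution from cells with few samples (where the concentration of the binomial counts $N_{\mathbf{z}}$ is sharp). Matching the Wasserstein exponent $1/\calD(d)$ at the first time level while simultaneously absorbing the higher-time-level contributions via the kernel noise moments is exactly what dictates the lower bounds on $q$ in both settings, and is the most delicate part of the argument.
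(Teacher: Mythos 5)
Your high-level shape --- bound a $(p-\epsilon)$-th moment, then apply Markov --- is actually close in spirit to what the paper does for the main cell-sum term, which is treated by a conditional Rosenthal inequality followed by Markov. But the crucial step is unjustified: you assert that ``running the argument of Theorem~\ref{thm:moment_estimate} at the $(p-\epsilon)$-th power'' should deliver $\E[\AWD(\mu,\bar\mu^N)^{p-\epsilon}]\leq CN^{-(p-\epsilon)/(\calD(d)T)}$, but the first-moment proof relies essentially on linearity of expectation and on the concavity inequality $\E[(N\mu^N(G))^{1-a}] \leq (N\mu(G))^{1-a}$ (Jensen). Neither of these survives raising the sum $Y_t = \sum_{G\in\Phi^N_t}\mu^N(G)\WD(\mu_G,\mu_G^N)$ to a power $p-\epsilon > 2$. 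The paper's actual mechanism is: (a) condition on the cell counts $\{\mu^N(G)\}$ so that, by Lemma~\ref{lem:helper2}, the variables $\WD(\mu_G,\mu_G^N)$ are independent binomial-empirical Wasserstein errors; (b) obtain a per-cell polynomial tail bound (Lemma~\ref{lem:deviation_shift_amp}, which pushes the shift-and-scale trick of Lemma~\ref{lem:shift_amp} through Theorem~\ref{thm:concentration_wass_rate}); and (c) aggregate across cells via a Rosenthal-type inequality (Lemma~\ref{lem:rosenthal_helper}) with a careful choice of exponents $\beta_+ > \beta$ to control the series $\sum_G \alpha_G^\beta \sigma_G^{(1+\beta_+-\beta)^{-1}}$. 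Without (c) your proposal has no way to pass from a per-cell moment bound to a moment bound for the random weighted sum $Y_t$; just claiming the first-moment estimate persists at higher powers is a genuine gap.

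Two further remarks. First, the paper does not go through a moment estimate at all for the first-coordinate term $\WD(\mu_1,\mu_1^N)$; it cites Fournier--Guillin's concentration bound (Theorem~\ref{thm:concentration_wass_rate}(i)) directly, which gives $CN(Nx)^{-q+\epsilon}$, strictly sharper in $N$ than what a Markov bound from a $(p-\epsilon)$-th moment would yield. Second, your diagnosis of the extra $CN^{-p/2}x^{-p}$ term in part (ii) is correct: it does come from the sample-mean term $\frac{\Delta_N}{N}\sum_n\|X^{(n)}\|$ in the non-uniform version of Lemma~\ref{lem:helper}, and the paper produces it exactly by the Markov plus Rosenthal~I route you sketch. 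But notice that this second term cannot be subsumed into a single moment bound for $\AWD$: Markov applied once to a single $(p-\epsilon)$-th moment yields only one power of $x$, whereas the stated bound is a sum of two distinct tails $x^{-p+\epsilon}$ and $x^{-p}$ with different $N$-dependence, so one must split $\AWD$ as in Lemma~\ref{lem:helper} and bound each summand's tail separately anyway.
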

Moreover, if we strengthen the moment condition to the exponential moment condition, we obtain the (optimal) exponential concentration inequality; see Section~\ref{sec:concentration2} for the proof.
\begin{setting}\label{setting:3}
    Let $\alpha \geq 2$, $\gamma > 0$, \add{$r \geq 0$}, and $\mu \in \mathcal{P}(\R^{dT})$ with finite $(\alpha,\gamma)$-exponential moment and $L$-Lipschitz, uniform $(\alpha,\gamma)$-exponential moment kernels of growth $r$.
\end{setting}
\begin{theorem}[Concentration inequality II]
    \label{thm:second_concentration}
    Assume Setting \ref{setting:3} and let $\mathrm{rate}(N) = \mathcal{O}(N^{-\frac{1}{\add{\calD(d)}T}})$.
    \begin{enumerate}[(i)]
        \item For uniform adapted empirical measures and $r=0$, there exist constants $c,C > 0$ such that, for all $x \geq 0$ and $N \in \N$,
        \begin{equation*}
            \P\Big[\AWD(\mu,\hat{\mu}^{N}) \geq  x + \mathrm{rate}(N) \Big] \leq Ce^{-cNx^{2}}.
        \end{equation*}
        \item For uniform adapted empirical measures and $r>0$, there exist constants $c,C > 0$ such that, for all $x \geq 0$ and $N \in \N$ satisfying $\sqrt{N}x\geq1$,
        \begin{equation*}
            \P\Big[\AWD(\mu,\hat{\mu}^{N}) \geq  x + \mathrm{rate}(N) \Big] \leq CNe^{-c (\sqrt{N}x)^{\frac{1}{r+1}}}.
        \end{equation*}
        \item For non-uniform adapted empirical measures, there exist constants $c,C > 0$ such that, for all $x \geq 0$ and $N \in \N$ satisfying $\sqrt{N}x\geq1$,
        \begin{equation*}
            \P\Big[\AWD(\mu,\check{\mu}^{N}) \geq  x + \mathrm{rate}(N) \Big] \leq CNe^{-c (\sqrt{N}x)^{\frac{1}{r+2}}}.
        \end{equation*}
    \end{enumerate}
\end{theorem}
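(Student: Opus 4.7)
The proof plan is to split
\[
\AWD(\mu,\bar{\mu}^{N}) \;\le\; \AWD\bigl(\mu,\mu\circ(\varphi^{N})^{-1}\bigr) + \AWD\bigl(\mu\circ(\varphi^{N})^{-1},\bar{\mu}^{N}\bigr) \;=:\; B_N + F_N,
\]
where $B_N$ is the deterministic projection bias, already shown to be of order $\mathrm{rate}(N)$ by the arguments behind Theorem~\ref{thm:moment_estimate} (the bias computation never uses the stochastic sampling, only the kernel and moment structure), and $F_N$ is the random fluctuation between two discrete measures living on the common grid $\Lambda^{N}$. The entire analysis reduces to an exponential concentration bound for $F_N$.

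Applying the dynamic-programming representation of the nested distance to the two grid-supported measures, $F_N$ is dominated by a weighted sum of empirical-versus-true probabilities of grid cubes,
\[
F_N \;\le\; \sum_{t=1}^{T} \sum_{C \in \Phi^{N}_{t}} \mathrm{diam}(C)\cdot \bigl|\mu\bigl((\varphi^{N})^{-1}(C)\bigr) - \bar{\mu}^{N}\bigl((\varphi^{N})^{-1}(C)\bigr)\bigr|,
\]
weighted by the conditional masses inherited from the recursion. Each absolute value is an average of i.i.d.\ Bernoulli indicators with mean equal to the true cube-probability, so Bernstein's inequality delivers a Gaussian tail per cube and a union bound assembles the global deviation. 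For part~(i), $r=0$ implies that all cubes have uniform diameter $\Delta_N$ and the kernel noise is uniformly controlled; it then suffices to truncate at a radius $R_N \sim (\log N)^{1/\alpha}$, outside of which the exponential-moment tail $e^{-\gamma R_N^{\alpha}}$ absorbs everything. Plugging this into Bernstein yields the Gaussian rate $Ce^{-cNx^{2}}$ after hiding polynomial factors in $C,c$.

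For parts~(ii) and~(iii) the new ingredient is a truncation radius adaptive in both $N$ and $x$. Because $\sigma_{t}(x_{1:t})$ may grow like $\|x_{1:t}\|^{r}$, the diameters of the charged cubes and the associated Bernstein variances scale in powers of the truncation radius $R$. Choosing $R=R(N,x)$ to balance a Bernstein contribution of the form $\exp(-cNx^{2}/R^{2(r+1)})$ against an outer-ball tail $\exp(-\gamma R^{\alpha})$ produces $R \sim (\sqrt{N}x)^{1/(r+1)}$ and the rate in~(ii). For the non-uniform grid, the ring $\mathcal{A}_{j}$ contains cubes of diameter $2^{j}\Delta_N$ but carries mass of order $e^{-c 2^{j\alpha}}$ and cardinality $O(2^{jdT}\Delta_N^{-dT})$, so one sums over $j$ via an additional union bound; the worse dependence of the diameter on $j$ degrades the exponent to $1/(r+2)$ and yields the rate in~(iii). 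The prefactor $N$ in~(ii)--(iii) is the cost of the union bound over the $O(\log N)$ rings and over the $T$ time layers, with the condition $\sqrt{N}x\ge 1$ ensuring that $R$ is well defined.

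The main obstacle, and the heaviest technical step, will be tracking the interplay between the growth exponent $r$ of $\sigma_{t}$ and the \emph{conditional} Bernstein variances along the nested recursion as $R$ runs. One needs a refinement of the TV-concentration inequality in which the fluctuation on a given grid cube is controlled by the true cube mass rather than by the worst-case cube volume, so that the exponential-moment bound on $\lambda_{x_{1:t}}$ from Definition~\ref{def:uniform_kernels} converts cleanly into an exponential tail for the empirical conditional probabilities after a preliminary truncation of the noise. Once this cube-wise Bernstein variant is established, the $R$-optimization sketched above assembles the three stated bounds.
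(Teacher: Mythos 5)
Your triangle-inequality split into a projection-bias term $B_N$ and a grid-level fluctuation term $F_N$ is a legitimate starting point, but the displayed bound on $F_N$ is the weak link and is, as written, false. You assert that
$$F_N \le \sum_{t=1}^{T}\sum_{C\in\Phi^N_t}\mathrm{diam}(C)\cdot\bigl|\mu\bigl((\varphi^N)^{-1}(C)\bigr)-\bar{\mu}^N\bigl((\varphi^N)^{-1}(C)\bigr)\bigr|,$$
i.e.\ that the nested distance between two measures living on the \emph{same} grid $\Lambda^N$ is controlled by cell-diameter-weighted total-variation differences of cell masses. This fails already at the first marginal: $\WD(\tilde\mu_1,\bar\mu^N_1)$ between two probabilities on a lattice of mesh $\Delta_N$ is not bounded by $\Delta_N\cdot\mathrm{TV}$, because excess mass must be transported across many cells (the correct weight is a cumulative distance, not the mesh size). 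The paper circumvents this precisely through Lemma~\ref{lem:helper}: the adapted distance is reduced not to cell-mass differences but to $\sum_{t}\sum_{G}\mu^N(G)\,\WD(\mu_G,\mu_G^N)$, where $\mu_G$ is the (continuous) average kernel over the cell $G$ and $\mu_G^N$ is its empirical counterpart. This keeps the full conditional-law structure alive; your cell-wise TV quantity discards it.

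The second gap is your concentration mechanism. Treating cell memberships as Bernoulli indicators, applying Bernstein per cell, and then union-bounding over all cells charged by the truncation costs a polynomial prefactor in $N$ (there are order $N^{(T-1)/T}$ cells in the bulk alone, plus a sum over rings). Such a prefactor cannot be absorbed into the constants $C,c$ in part~(i), which must hold for \emph{all} $N\in\N$ and $x\geq0$ without an $N$-prefactor: for $x\sim N^{-1/2}$ the bound $N^{k}e^{-cNx^2}$ is useless while $Ce^{-c'Nx^2}$ is order one. The paper instead conditions on the cell counts $\{\mu^N(G)\}$ (Lemma~\ref{lem:helper2} gives conditional independence of the $\mu_G^N$), establishes a sub-Gaussian tail for each $\WD(\mu_G,\mu_G^N)-\E[\WD(\mu_G,\mu_G^N)\mid\mu^N(G)]$ via the Dedecker-type mean-concentration of Wasserstein distance under exponential moments (Lemma~\ref{lem:wass_mean_concentration}, built into Lemma~\ref{lem:exp_shift_amp}), and then aggregates through the sub-Gaussian weighted-sum closure (Lemma~\ref{lem:subgaussian2}) rather than a union bound. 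The conditional mean $\E[Y_t\mid\mu^N(G)]$ is handled separately by McDiarmid applied to $N^{-1}\sum_G(N\mu^N(G))^{1-1/d}$ (Lemma~\ref{lem:mc1}). Your $R$-optimization $R\sim(\sqrt{N}x)^{1/(r+1)}$ or $(\sqrt{N}x)^{1/(r+2)}$ does match the paper's and explains the exponents in parts~(ii)--(iii), but the aggregation step leading into it needs the sub-Gaussian-sum machinery, not a Bernstein union bound.

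So: the decomposition into bias and fluctuation and the truncation-radius optimization are in the right spirit, but you would need to replace the cell-mass TV bound by the average-kernel representation of Lemma~\ref{lem:helper}, and replace ``Bernstein + union bound'' by ``conditional sub-Gaussian + weighted-sum closure + McDiarmid for the mean.'' Those two substitutions are not cosmetic; without them the argument does not close.
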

Theorem~\ref{thm:second_concentration} (i) recovers the same optimal exponential concentration rate of compactly supported adapted empirical measures under adapted Wasserstein distance, that is Theorem 1.7 in \cite{backhoff2022estimating}. Moreover, this is also the same exponential concentration rate of empirical measures under Wasserstein distance under exponential moment assumption, that is Theorem~2 in \cite{fournier2015rate}. 

For the convenience of the reader, we 
recall the convergence rates of adapted empirical measures given in \cite{backhoff2022estimating}.
\begin{theorem}[Theorem 1.3 in \cite{backhoff2022estimating}]
    \label{thm:asconvergence_cube}
    Let $\mu \in \mathcal{P}([0,1]^{dT})$. Then $\lim_{N \to \infty} \AWD(\mu,\hat{\mu}^{N}) = 0\quad \P\text{-a.s.}$.
    \end{theorem}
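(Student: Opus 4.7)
The plan is to compare $\mu$ and $\hat{\mu}^{N}$ through an auxiliary projection onto a \emph{fixed} coarser grid of mesh $1/M$, and only at the end let $M\to\infty$. This decouples the randomness (handled by the strong law of large numbers on a finite tree) from the fact that the fine grid $\hat{\Phi}^{N}$ itself depends on $N$. Let $\psi^{M}\colon[0,1]^{dT}\to\Lambda^{(M)}$ denote the mid-point projection onto the uniform $1/M$-grid. Because $\psi^{M}$ acts coordinate-wise in time, the identity-like coupling $(X,\psi^{M}(X))$ with $X\sim\nu$ is bi-causal, so for every $\nu\in\calP([0,1]^{dT})$,
\begin{equation*}
\AWD(\nu,\psi^{M}_{\#}\nu)\;\leq\;\E\Vert X-\psi^{M}(X)\Vert\;\leq\;\frac{dT}{2M}.
\end{equation*}
Applying this bound to $\nu=\mu$ and to $\nu=\hat{\mu}^{N}$ and using the triangle inequality yields
\begin{equation*}
\AWD(\mu,\hat{\mu}^{N})\;\leq\;\frac{dT}{M}\;+\;\AWD\!\left(\psi^{M}_{\#}\mu,\psi^{M}_{\#}\hat{\mu}^{N}\right).
\end{equation*}

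For the remaining term, both measures are supported on the finite set $\Lambda^{(M)}$. Since $\Vert\hat{\varphi}^{N}(x)-x\Vert\leq dT/(2m_N)\to 0$, for every sample $X^{(n)}$ not lying on the boundary of some coarse cube (a $\mu$-null set) one has $\psi^{M}(\hat{\varphi}^{N}(X^{(n)}))=\psi^{M}(X^{(n)})$ for all $N$ sufficiently large. Hence $\P$-almost surely, $\psi^{M}_{\#}\hat{\mu}^{N}$ eventually coincides with the empirical measure of the i.i.d.\ samples $\psi^{M}(X^{(n)})\sim\psi^{M}_{\#}\mu$. By the strong law of large numbers, applied to each of the finitely many paths through the product tree $\Lambda^{(M)}$, the path probabilities of $\psi^{M}_{\#}\hat{\mu}^{N}$ converge $\P$-almost surely to those of $\psi^{M}_{\#}\mu$. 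On a finite tree with bounded cost, $\AWD$ is continuous with respect to this mode of convergence: by induction on $T$ and the dynamic programming recursion
\begin{equation*}
\AWD(\alpha,\beta)=\min_{\pi\in\cpl(\alpha_{1},\beta_{1})}\int\left(\Vert x_{1}-y_{1}\Vert+\AWD(\alpha_{x_{1}},\beta_{y_{1}})\right)\pi(dx_{1},dy_{1}),
\end{equation*}
the conditional laws at tree nodes with positive limiting mass converge, while nodes with vanishing limiting marginal carry vanishing coupling mass which is multiplied by a cost bounded by the tree diameter. Consequently $\AWD(\psi^{M}_{\#}\mu,\psi^{M}_{\#}\hat{\mu}^{N})\to 0$ $\P$-almost surely.

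Combining both displays gives $\limsup_{N\to\infty}\AWD(\mu,\hat{\mu}^{N})\leq dT/M$ almost surely, and letting $M\to\infty$ proves the theorem. The main obstacle I anticipate is the continuity step on the finite tree: one must carefully handle nodes whose limiting marginal mass is zero but whose pre-limit mass is positive. This is dealt with by observing that the coupling mass on such nodes is squeezed to zero by the marginal constraint, while the nested cost stays bounded by the diameter of the compact support, so their contribution to the coupling cost vanishes in the limit. The remaining ingredients are a clean interplay of the triangle inequality, the bi-causality of coordinate-wise projections, and the classical SLLN, and require no further delicate estimates in the compact setting.
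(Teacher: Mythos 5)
Your Step~1 lemma, that $(X,\psi^M(X))$ with $X\sim\nu$ is a bi-causal coupling and hence $\AWD(\nu,\psi^M_\#\nu)\leq dT/(2M)$ for every $\nu$, is false, and this is a fatal gap rather than a technical one. The coupling $(\mathbf{id},\psi^M)_\#\nu$ is causal from $X$ to $Y=\psi^M(X)$ (since $Y_{1:t}$ is a function of $X_{1:t}$), but the reverse direction fails: the disintegration $\pi_{x_{1:t},y_{1:t}}$ has second marginal $(\psi^M_{t+1})_\#\nu_{x_{1:t}}$, whereas the kernel $(\psi^M_\#\nu)_{y_{1:t}}$ that bi-causality requires is the $\nu$-average of $(\psi^M_{t+1})_\#\nu_{x'_{1:t}}$ over all $x'_{1:t}$ that project to $y_{1:t}$. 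These do not coincide in general. A concrete counterexample with $d=1$, $T=2$: take $\nu=\tfrac12\delta_{(a,1)}+\tfrac12\delta_{(b,0)}$ with $a\neq b$ in the same time-$1$ cell of the $1/M$-grid, with midpoint $c$. Then $\psi^M_\#\nu=\tfrac12\delta_{(c,u)}+\tfrac12\delta_{(c,v)}$ with $u\approx1$, $v\approx0$, and every bi-causal coupling is forced at time $1$ to send both atoms to $c$ and at time $2$ to couple $\delta_1$ (resp.\ $\delta_0$) with $\tfrac12\delta_u+\tfrac12\delta_v$, so $\AWD(\nu,\psi^M_\#\nu)\geq\tfrac12-\mathcal{O}(1/M)$, not $\mathcal{O}(1/M)$. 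This is precisely the mechanism of the paper's introductory example showing $\AWD\gg\WD$ when conditional laws mismatch.

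There is a structural way to see that the argument cannot be repaired in this form: if the Step~1 bound held uniformly in $\nu$, you could apply it to $\nu=\mu$ and to $\nu=\mu^N$ (the \emph{raw} empirical measure, with no adapted grid at all) and combine with your own finite-tree SLLN argument for $\AWD(\psi^M_\#\mu,\psi^M_\#\mu^N)\to0$ to conclude $\AWD(\mu,\mu^N)\to0$ almost surely. That would say the plain empirical measure converges under the adapted Wasserstein distance, contradicting the Pflug--Pichler observation that motivates the entire construction of adapted empirical measures. The step you flagged as the ``main obstacle'' (continuity on a finite tree with vanishing-mass nodes) is in fact benign; the real obstruction sits in the projection bound you treated as routine. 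The actual proof in \cite{backhoff2022estimating} instead decomposes $\AWD(\mu,\hat{\mu}^N)$ into conditional-law errors over the cells of $\hat{\Phi}^N$ (cf.\ Lemma~\ref{lem:helper} here) and shows those errors vanish; no uniform coarse-projection bound is available to shortcut this.
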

\begin{theorem}[Theorem 1.5 and Theorem 1.7 in \cite{backhoff2022estimating}]\label{thm:errorestimation_cube}
    Assume $\mu \in \mathcal{P}([0,1]^{dT})$ with $L$-Lipschitz kernels. Then there exists $C > 0$ s.t., for all $N \in \N$ and $x > 0$,
    \begin{equation*}
        \E[\AWD(\mu,\hat{\mu}^{N})] \leq CN^{-\frac{1}{\calD(d)T}}
    \end{equation*}
    and 
    \begin{equation*}
    \P\Big[\AWD(\mu,\hat{\mu}^{N}) \geq \mathrm{rate}(N) + x\Big] \leq Ce^{-cNx^2},
    \end{equation*}
where $\mathrm{rate}(N) = \mathcal{O}(\Delta_N)$.
\end{theorem}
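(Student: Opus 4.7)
The plan is to bound $\AWD(\mu,\hat{\mu}^{N})$ by inserting the deterministic auxiliary measure $\hat{\mu}:=(\hat{\varphi}^{N})_{\ast}\mu$, obtained by pushing $\mu$ forward through the grid projection, and then applying the triangle inequality
\begin{equation*}
    \AWD(\mu,\hat{\mu}^{N}) \leq \AWD(\mu,\hat{\mu}) + \AWD(\hat{\mu},\hat{\mu}^{N}).
\end{equation*}
The first term captures a purely deterministic quantisation error, while the second is a finite-sample error on the common finite grid $\hat{\Lambda}^{N}$.

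For $\AWD(\mu,\hat{\mu})$ I would exhibit the synchronous bi-causal coupling that pairs each $x\in[0,1]^{dT}$ with its projection $\hat{\varphi}^{N}(x)$. Since $\hat{\varphi}^{N}$ is coordinate-wise, the conditional law of $(x_{t+1},\hat{\varphi}^{N}(x)_{t+1})$ given $(x_{1:t},\hat{\varphi}^{N}(x)_{1:t})$ is automatically a coupling of $\mu_{x_{1:t}}$ and $\hat{\mu}_{\hat{\varphi}^{N}(x)_{1:t}}$, so bi-causality comes for free; each displacement is at most $\Delta_{N}/2$ in each coordinate. The $L$-Lipschitz kernel hypothesis is exactly what guarantees that the conditional pushforwards are close enough on matched histories to yield a clean $O(\Delta_{N})$ bound after summation over $t$.

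For $\AWD(\hat{\mu},\hat{\mu}^{N})$ I would use the dynamic programming reformulation of $\AWD$, which, since both measures live on $\hat{\Lambda}^{N}$, recursively measures $\WD_{1}$-distances between empirical and true conditional laws at matching grid histories. At a fixed time $t$ and cube $\hat{\mathcal{C}}_{\mathbf{z}}^{N}$, the empirical conditional is the normalised count of how the $N_{\mathbf{z}} \sim \mathrm{Bin}(N,p_{\mathbf{z}})$ samples inside that cube distribute among the sub-cubes at time $t+1$; applying the classical discrete Wasserstein estimate of \cite{fournier2015rate} inside each cube gives a deviation of order $\Delta_{N}/\sqrt{N p_{\mathbf{z}}}$. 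Taking expectation, summing over cubes weighted by $p_{\mathbf{z}}$, and invoking Cauchy--Schwarz together with the bound $\#\{\mathbf{z}:p_{\mathbf{z}}>0\}\leq \lceil 1/\Delta_{N}\rceil^{dt}\leq N^{t/T}$, yields a time-$t$ contribution of order $N^{-1/(\calD(d)T)}$, uniformly in $t$. The main obstacle here is the bookkeeping of the backward recursion through $t=1,\dots,T$: carefully propagating the conditional errors, absorbing the logarithmic correction arising when $d\leq 2$, and checking that the choice $\Delta_{N}=N^{-1/(dT)}$ balances quantisation against sampling so that both summands hit the advertised rate.

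For the concentration inequality I would apply McDiarmid's bounded differences inequality to the random variable $F(X^{(1)},\dots,X^{(N)}):=\AWD(\mu,\hat{\mu}^{N})$. Replacing a single sample changes $\hat{\mu}^{N}$ by swapping one atom of mass $1/N$ at $\AWD$-cost bounded by the diameter of $[0,1]^{dT}$, so $F$ satisfies bounded differences with $c_{i}=O(1/N)$. McDiarmid then gives $\P[F-\E F\geq x]\leq \exp(-cNx^{2})$, and combining with the expected-value bound just obtained upgrades $\E F$ to $\mathrm{rate}(N)=\mathcal{O}(\Delta_{N})$ and yields the stated exponential concentration.
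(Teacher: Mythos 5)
The statement you are asked to prove is, in this paper, a verbatim recall of Theorem~1.5 and Theorem~1.7 of \cite{backhoff2022estimating}; the paper itself offers no new proof. The relevant machinery does appear in the paper (Lemma~\ref{lem:helper}, Lemma~\ref{lem:helper2}, and the arguments of Sections~\ref{sect:mom_est} and \ref{sec:concentration2}, which specialize to the compact case), so I compare your proposal to that route. Your proposal has two genuine gaps and one unjustified claim.

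First, the coupling $(\mathbf{id},\hat{\varphi}^{N})_{\#}\mu$ is \emph{not} bi-causal, so "bi-causality comes for free" is wrong. Disintegrating $\pi=(\mathbf{id},\hat{\varphi}^{N})_{\#}\mu$ at time $t$, the conditional law $\pi_{x_{1:t},y_{1:t}}$ has first marginal $\mu_{x_{1:t}}$ (so $\pi$ is causal in one direction), but its second marginal is $(\hat{\varphi}^{N}_{t+1})_{\#}\mu_{x_{1:t}}$, whereas the bi-causality constraint \eqref{eq:bicausal} requires it to equal $\hat{\mu}_{y_{1:t}}=(\hat{\varphi}^{N}_{t+1})_{\#}\mu_{G}$, the projection of the \emph{average} kernel over the cube $G$ containing $x_{1:t}$. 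These differ unless the kernels are constant on each cube. One can still show $\AWD(\mu,\hat{\mu})=O(\Delta_{N})$, but only by constructing a genuinely bi-causal coupling via the dynamic programming recursion, at each stage paying $\WD(\mu_{x_{1:t}},\mu_{G})\le L\cdot\mathrm{diam}(G)$ from the Lipschitz kernel hypothesis — precisely the step your sketch elides. This is the content of Lemma~3.1 of \cite{backhoff2022estimating}, reproduced as \eqref{eq:lem:helper:2}, and the paper avoids the issue entirely by never inserting the intermediate $\hat{\mu}$; instead it bounds $\AWD(\mu,\hat{\mu}^{N})$ directly in terms of $\Delta_N+\WD(\mu_1,\mu^N_1)+\sum_t\sum_G\mu^N(G)\WD(\mu_G,\mu_G^N)$, with $\mu_G,\mu_G^N$ the (unprojected) average kernels on $\R^d$.

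Second, the per-cube Wasserstein rate "$\Delta_{N}/\sqrt{N p_{\mathbf{z}}}$" is wrong in dimension $d\ge 2$. By Theorem~\ref{thm:wass_rate} (Theorem~1 of \cite{fournier2015rate}), the expected error between a law on a bounded subset of $\R^d$ and its $n$-sample empirical version is of order $n^{-1/\calD(d)}$, not $\Delta_N n^{-1/2}$; the factor $\Delta_N$ does not belong there (both $\mu_G$ and $\mu_G^N$ live on a set of diameter $O(1)$, not $O(\Delta_N)$), and the exponent $1/2$ only holds for $d=1$. Your subsequent bookkeeping (Cauchy--Schwarz plus the cube count $N^{t/T}$) happens to give a rate that is even faster than the true one, which is a red flag. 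With the correct rate $(N\mu^N(G))^{-1/\calD(d)}$ and H\"older with exponent $1-1/\calD(d)$ in place of Cauchy--Schwarz, the computation does yield $N^{-1/(\calD(d)T)}$, as in \eqref{eq:thm:moment_estimate:4}--\eqref{eq:thm:moment_estimate:8}. Finally, your McDiarmid step asserts bounded differences $c_i=O(1/N)$ for $F=\AWD(\mu,\hat{\mu}^{N})$ by saying one atom of mass $1/N$ can be moved at cost bounded by the diameter. This is plausible but by no means immediate for $\AWD$: moving one sample perturbs \emph{all} the conditional laws along its trajectory, and showing that the cascading effect on the nested Wasserstein distances remains $O(T\,\mathrm{diam}/N)$ requires a recursive argument that you would need to write down. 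By contrast, the paper's proof of Theorem~\ref{thm:second_concentration} does not apply McDiarmid to $\AWD$ directly; it conditions on the counts $\mu^N(G)$, uses sub-Gaussian concentration (Lemma~\ref{lem:wass_mean_concentration}) for each $\WD(\mu_G,\mu_G^N)$, combines via Lemma~\ref{lem:subgaussian2}, and invokes McDiarmid only for the auxiliary statistic $\tilde{Y}_t$ in Lemma~\ref{lem:mc1}, where bounded differences are elementary.
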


\begin{remark}
If $\mu \in \calP([0,1]^{dT})$, then the integrability assumptions in Theorem~\ref{thm:asconvergence}, Theorem~\ref{thm:moment_estimate} and Theorem~\ref{thm:second_concentration} hold for all $p > 1$. Therefore, our results generalize those in \cite{backhoff2022estimating}. In particular we recover the sharp rates when $d=1$ and $d\geq 3$.

    When $d=2$, the sharp moment rate is of growth $N^{-\frac{1}{d}}\log(N)$. However, the $\log(N)$ term is estimated by $N^{\frac{1}{d} - \frac{1}{d+\zeta}}$ for all $\zeta >0$ in our results, in particular $\zeta = 1$ in the statements of main theorems. Thus, we achieve at most a $N^{\frac{1}{d+\zeta}}$ rate for all $\zeta >0$, and not the optimal one $N^{-\frac{1}{d}}\log(N)$. 
\end{remark}

\section{Notations and preparatory lemmas}\label{sec:preparation}
We start this section by recalling some notations introduced above, and introducing some more, that will be used throughout the paper.
For $t = 1,\dots,T$, we denote by $\Vert \cdot \Vert_{1:t} \colon \R^{dt} \to \R$ the sum-norm on $\R^{dt}$ defined by $\Vert x_{1:t} \Vert_{1:t} = \sum_{s = 1}^{t} \Vert x_{s} \Vert_{\R^{d}}$ for all $x_{1:t} = (x_1,\dots,x_t) \in \R^{dt}$. For simplicity, we unify notations of norm and write $\Vert x_{1:t} \Vert = \Vert x_{1:t} \Vert_{1:t}$ for all $x_{1:t} \in \R^{dt}$ and $t = 1,\dots,T$. For $\mu \in \mathcal{P}(\R^{dT})$ and $t=1,\dots,T$, we denote by $\mu_{t}\in \mathcal{P}(\R^{d})$ the $t$-th marginal of $\mu$, and by $\mu_{1:t}\in \mathcal{P}(\R^{dt})$ the up to $t$ marginal of $\mu$, in particular $\mu_{1}$ is the first marginal. For all $\mu \in \mathcal{P}(\R^{dT})$ and Borel sets $G \subseteq \R^{dt}$, $t=1,\dots,T$, we also unify notations of marginals and denote $\mu(G) = \mu_{1:t}(G)$. For all $x_{1:t} \in \R^{dt}$, $t=1,\dots,T-1$, we denote by $\mu_{x_{1:t}} \in \mathcal{P}(\R^{d})$ the kernel of $\mu$. Moreover, for all Borel sets $G \subseteq \R^{dt}$, $t=1,\dots,T-1$, we define the conditional law of $\mu$ on $G$ by $\mu\vert_G\in \mathcal{P}(\R^{dt})$:
\begin{equation*}
\mu\vert_G(\add{dx_{1:t}}) = \frac{\mu(G\cap dx_{1:t}  )}{\mu(G)},
\end{equation*}
and define the average kernel of $\mu$ on $G$ by $\mu_{G} \in \mathcal{P}(\R^{d})$:
\begin{equation*}
    \mu_G(dx_{t+1}) = \frac{1}{\mu(G)}\int_{G} \mu_{x_{1:t}}(dx_{t+1}) \mu(dx_{1:t}).
\end{equation*}
If $\mu(G) = 0$, w.l.o.g. we set $\mu\vert_G = \mu_G = \delta_0$ for the completeness of definition. For $G \subseteq \R^{dT}$, we define $\mathrm{diam}(G) = \sup_{x,y\in G}\Vert x - y \Vert$ and refer to it as the diameter of $G$, and let $\Vert G\Vert = \sup_{x\in G}\Vert x \Vert$. For example, for any cube $G = [0,\ell]^{dt}$, $\ell >0$, we have $\mathrm{diam}(G) = \Vert G \Vert = t\sqrt{d}\ell^{d}$. \add{For $f \in \R^{d}$, $\sigma > 0$ and $\gamma \in \calP(\R^{d})$, we use the notation $\gamma + f = (x\mapsto x+f)_{\#} \gamma$ and $
    \sigma \gamma = (x\mapsto \sigma x)_{\#} \gamma$.}
Recall the notations introduced in Definitions~\ref{def:uniform_adapted_empirical_measure} and  \ref{def:nonuniform_adapted_empirical_measure} on $\R^{dT}$, which we can analogously define on $\R^{dt}$ \add{by replacing the time dimension $T$ with $t$, e.g. $\hat{\Phi}^{N}_t = \big\{\big[0,\frac{1}{m}\big]^{dt} + \frac{1}{m} \mathbf{z} , \mathbf{z} \in \mathbb{Z}^{dt}\big\}$ and
$\mathcal{A}_{j}^t = [-2^j,2^{j}]^{dt}\backslash[-2^{j-1},2^{j-1}]^{dt}$.} Similarly, replacing $T$ with $t$, we define $\check{\Phi}^{N}_{t}$, $\Phi^{N}_{t}$, $\hat{\Phi}^{N,j}_t$, $\check{\Phi}^{N,j}_t$, $\Phi^{N,j}_t$, $\hat{\Lambda}^{N}_t$, $\check{\Lambda}^{N}_t$, $\Lambda^{N}_t$, $\hat{\varphi}^{N}_{t}$, $\check{\varphi}^{N}_{t}$, $\varphi^{N}_{t}$.\\[0.1cm]
\emph{Convention:} In our proofs, we let $C,c > 0$ be generic constants that may increase from line to line  depending on all sorts of external parameters, e.g. $\epsilon$ and $L$, but independent of internal parameters, e.g. $N\in\N$ and $x > 0$. Moreover, to simplify notations, we present the proof for the case $d \geq 3$ (so that $\calD(d)=d$), while for the general case one would need to replace $d$ by $\calD(d)$ in the proof.

\subsection{Preparatory lemmas}
The uniform  and non-uniform adapted partitions are different in the diameter of the cubes that form the partition, i.e. $\mathrm{diam}(G)$ for $G \in \Phi_t^N$.
Thus, they are also different in the number of cubes in the same cubic ring, which we denote by $\vert \Phi^{N,j}_t \vert$. 
In fact, we have the following estimates of $\mathrm{diam}(G)$ and $\vert \Phi^{N,j}_t \vert$:
\begin{itemize}
    \item For all $t = 1,\dots,T$, $j\geq 0$, $\hat{G} \in \hat{\Phi}^{N,j}_t$,
    \begin{equation}\label{eq:estimate_helper:1}
        \mathrm{diam}(\hat{G}) \leq t\sqrt{d}\Delta_N, \quad \vert \hat{\Phi}^{N,j}_t \vert \leq (2^{j+1}N^{\frac{1}{dT}})^{dt}.
    \end{equation}
    \item For all $t = 1,\dots,T$, $j\geq 0$, $\check{G} \in \check{\Phi}^{N,j}_t$, 
    \begin{equation}\label{eq:estimate_helper:2}
        \mathrm{diam}(\check{G}) \leq t\sqrt{d}2^{j}\Delta_N, \quad \vert \check{\Phi}^{N,j}_t \vert \leq (4N^{\frac{1}{dT}})^{dt}.
    \end{equation}
\end{itemize}
Moreover, since $\Phi^{N,j}_{t}$ is a partition of $\mathcal{A}^{j}_{t}$, we also need to estimate $\mu(\mathcal{A}^{j}_{t})$ in later proofs, which is a direct consequence of Markov's inequality:
\begin{equation*}
    \mu(\mathcal{A}^{t}_{0}) \leq 1,\quad \mu(\mathcal{A}^{t}_{j})\leq \frac{M_{q}(\mu_{1:t})}{2^{q(j-1)}},~ j\in\N.
\end{equation*}
Therefore, if we assume Setting \ref{setting:1} or Setting \ref{setting:2}, then there exists $C > 0$ such that 
\begin{equation}\label{eq:markov2}
    \mu(\mathcal{A}^{t}_{j})\leq \frac{C}{2^{q(j-1)}},~ j\geq 0.
\end{equation}
Next, we present a lemma decomposing the adapted Wasserstein distance between (uniform or non-uniform) adapted empirical measure and the underlying measure, under uniform Lipschitz kernel assumption. In this lemma, the bound for the uniform adapted empirical measure is sharper than for the non-uniform one. 
\begin{lemma}\label{lem:helper}
    Assume $\mu \in \mathcal{P}(\R^{dT})$  has $L$-Lipschitz kernels. Then
    \begin{enumerate}[(i)]
        \item there exists $C > 0$ s.t., for all $N \in \N$,
        \begin{equation}\label{eq:lem:helper:0}
                \AWD(\mu,\hat{\mu}^{N}) \leq C\Big(\Delta_N  + \WD(\mu_{1},\mu^{N}_{1}) + \sum_{t=1}^{T-1}\sum_{G\in\hat{\Phi}_{t}^{N}}\mu^{N}(G)\WD(\mu_{G},\mu_{G}^{N})\Big);
        \end{equation}
        \item there exists $C > 0$ s.t., for all $N \in \N$,
        \begin{equation}\label{eq:lem:helper:1}
                \AWD(\mu,\check{\mu}^{N}) \leq C\Big(\Delta_N  + \WD(\mu_{1},\mu^{N}_{1}) + \sum_{t=1}^{T-1}\sum_{G\in\check{\Phi}_{t}^{N}}\mu^{N}(G)\WD(\mu_{G},\mu_{G}^{N})\Big)+ \frac{C\Delta_N}{N}\sum_{n=1}^{N}\Vert X^{(n)} \Vert.
        \end{equation}
    \end{enumerate}
\end{lemma}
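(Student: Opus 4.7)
The plan is to construct, by induction on the time index $t$, a bi-causal coupling $\pi\in\bccpl(\mu,\bar{\mu}^N)$ whose cost is upper-bounded by the right-hand side of the claim. The inductive step exploits the structural identity that, for a grid point $y_{1:t}$ and $G\in\bar{\Phi}_t^N$ the cube with mid-point $y_{1:t}$, the conditional $\bar{\mu}^N_{y_{1:t}}$ equals the pushforward of the empirical average kernel $\mu_G^N$ under the one-step grid projection on $\R^d$.

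At the first step, I would take $\gamma_1\in\cpl(\mu_1,\mu_1^N)$ optimal for $\WD(\mu_1,\mu_1^N)$ and set $\pi_1:=(\mathrm{id},\bar{\varphi}_1^N)_\#\gamma_1\in\cpl(\mu_1,\bar{\mu}_1^N)$. Triangle inequality yields
\[
\int\|x_1-y_1\|\,d\pi_1(x_1,y_1) \;\leq\; \WD(\mu_1,\mu_1^N) + \int\|y_1-\bar{\varphi}_1^N(y_1)\|\,d\mu_1^N(y_1).
\]
The last integral is bounded uniformly by $C\Delta_N$ in the uniform case (since $\|y_1-\hat{\varphi}_1^N(y_1)\|\leq \tfrac{d}{2}\Delta_N$), and by $\frac{C\Delta_N}{N}\sum_n\|X_1^{(n)}\|$ in the non-uniform case (since for $y_1\in\mathcal{A}_j^1$ one has $\|y_1-\check{\varphi}_1^N(y_1)\|\lesssim 2^{j}\Delta_N\lesssim\|y_1\|\Delta_N$). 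This is exactly where the extra term in (ii) enters.

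For the inductive step, given $\pi_{1:t}\in\bccpl(\mu_{1:t},\bar{\mu}_{1:t}^N)$, for each $(x_{1:t},y_{1:t})$ in its support I would define $\pi_{x_{1:t},y_{1:t}}$ as the pushforward by $(\mathrm{id},\bar{\varphi}_1^N)$ on the second coordinate of an optimal $\WD$-coupling between $\mu_{x_{1:t}}$ and $\mu_G^N$ on $\R^d$. Triangle inequality combined with the $L$-Lipschitz kernel hypothesis and convexity of $\WD$ yields
\[
\WD(\mu_{x_{1:t}},\bar{\mu}_{y_{1:t}}^N)\;\leq\; L\bigl(\|x_{1:t}-y_{1:t}\|+\mathrm{diam}(G)\bigr) + \WD(\mu_G,\mu_G^N) + \int\|z-\bar{\varphi}_1^N(z)\|\,d\mu_G^N(z).
\]
Integrating against $\pi_{1:t}$ and summing over $t$, the middle term produces exactly $\sum_{t}\sum_{G}\mu^N(G)\,\WD(\mu_G,\mu_G^N)$, since at a grid point $y_{1:t}$ the second-marginal mass of $\pi_{1:t}$ is $\mu^N(G)$. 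The Lipschitz piece $L\|x_{1:t}-y_{1:t}\|$ is absorbed into the transport cost already accumulated at earlier time steps (the accumulated multiplicative factor is swept into the constant $C$), while the $L\,\mathrm{diam}(G)$ and projection-error pieces generate the $\Delta_N$-type terms.

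The main obstacle will be the non-uniform case (ii). There $\mathrm{diam}(G)$ is of order $2^j\Delta_N$ for $G\in\check{\Phi}_t^{N,j}$, and the one-step projection error scales likewise, so they are \emph{not} uniformly $O(\Delta_N)$. The saving observation is that for any cube $G\subseteq\mathcal{A}_j^t$ one has $\mathrm{diam}(G)+\|z-\check{\varphi}_1^N(z)\|\lesssim\Delta_N\cdot 2^j\lesssim\Delta_N\cdot\|X^{(n)}\|$ for samples $X^{(n)}$ contributing mass to $G$; after mass-weighting, all such contributions telescope into $\frac{C\Delta_N}{N}\sum_n\|X^{(n)}\|$. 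A secondary subtlety is that the non-uniform projection as defined does not factor neatly through time, which requires us to identify $\check{\mu}^N_{y_{1:t}}$ with the appropriate pushforward of $\mu_G^N$ by carefully matching cubes across rings in $\R^{dt}$ and $\R^{dT}$.
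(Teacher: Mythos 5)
Your plan is correct and takes essentially the same route as the paper, just more self-contained: where the paper invokes Lemma~3.1 of \cite{backhoff2022estimating} as a black-box reduction of $\AWD(\mu,\bar\mu^N)$ to conditional Wasserstein distances $\WD(\mu_{x_{1:t}},\bar\mu^N_{x_{1:t}})$ at grid points, you re-derive that reduction inline by constructing a bi-causal coupling $\pi$ inductively in $t$. After that, the triple triangle inequality through $\mu_G$ and $\mu_G^N$, the $L$-Lipschitz bound on $\WD(\mu_{x_{1:t}},\mu_G)$, and the per-cube projection error (uniformly $O(\Delta_N)$ in case (i), ring-weighted $O(2^j\Delta_N)$ in case (ii) summing to $\frac{\Delta_N}{N}\sum_n\|X^{(n)}\|$) match the paper's Step~2, cf.\ \eqref{eq:lem:helper:8}--\eqref{eq:lem:helper:12}, essentially line by line. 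Two things you should spell out. First, the ``absorption'' of $L\|x_{1:t}-y_{1:t}\|$: with $C_t=\int\|x_{1:t}-y_{1:t}\|\,d\pi_{1:t}$, your per-step bound reads $C_{t+1}\le(1+L)C_t+\mathrm{err}_t$, which iterates to $C_T\le(1+L)^{T-1}C_1+\sum_{t}(1+L)^{T-1-t}\mathrm{err}_t$; since $T$ and $L$ are fixed this is indeed swept into the generic constant, but the recursion should be stated rather than gestured at. Second, the structural identity $\bar\mu^N_{y_{1:t}}=(\bar\varphi^N_1)_\#\mu^N_G$ (equivalently, that $\pi_{1:t}$ places mass $\mu^N(G)$ on the grid point $y_{1:t}$) requires the projection to factor through time; this is immediate for the uniform grid, while for the non-uniform grid the paper's estimate \eqref{eq:lem:helper:11} relies on the same cube-by-cube matching between $\check{\Phi}^N_1$-cells of $\mu^N_G$ and $\check\mu^N_G$ that you flag as a subtlety, so you are on equal footing there and the bookkeeping you describe is exactly what is needed.
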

\begin{proof}
    \textbf{Step 1:} In this step, all statements hold for both uniform and non-uniform adapted empirical measures. Recall that we use the notation $\bar{\mu}^{N}$ to denote the uniform or non-uniform adapted empirical measure.
    \add{In \cite{backhoff2022estimating}, Lemma 3.1 proves that, when $\mu \in \calP([0,1]^{dT})$,}
    \begin{align}\label{eq:lem:helper:2}
        \AWD(\mu,\bar{\mu}^{N}) &\leq C\WD(\mu_1,\bar{\mu}^{N}_1) + C\sum_{t=1}^{T-1}\sum_{G\in\Phi_{t}^{N}}\int_{G}\WD(\mu_{x_{1:t}},\bar{\mu}^{N}_{x_{1:t}})\bar{\mu}^{N}(dx_{1:t}).
    \end{align}
    Since the proof does not depend on the compactness, and therefore could be generalized to the case when $\mu \in \calP(\R^{dT})$. Now, by the triangle inequality we have
    \begin{equation}\label{eq:lem:helper:3}
        \WD(\mu_1,\bar{\mu}^{N}_1) \leq \WD(\mu_1,\mu^{N}_1) + \WD(\mu^{N}_1,\bar{\mu}^{N}_1).
    \end{equation}
    Also, noticing that $\mu^{N}_1(G) = \bar{\mu}^{N}_1(G)$ for all $G \in \Phi_{1}^{N}$, we have that 
    \begin{equation}\label{eq:lem:helper:4}
        \begin{aligned}
            \WD(\mu^{N}_1,\bar{\mu}^{N}_1) &\leq \sum_{G \in \Phi_{1}^{N}}\mu^{N}(G)\WD\big(\mu^{N}_1\vert_G,\bar{\mu}^{N}_1\vert_G\big) \leq \sum_{G \in \Phi_{1}^{N}}\mu^{N}(G)\mathrm{diam}(G).
        \end{aligned}
    \end{equation}
    By combining \eqref{eq:lem:helper:3} and \eqref{eq:lem:helper:4}, we obtain
    \begin{equation}\label{eq:lem:helper:5}
        \begin{aligned}
            \WD(\mu_1,\bar{\mu}^{N}_1) \leq \WD(\mu_1,\mu^{N}_1) + \sum_{G \in \Phi_{1}^{N}}\mu^{N}(G)\mathrm{diam}(G).
        \end{aligned}
    \end{equation}
    Next, we estimate the second term in \eqref{eq:lem:helper:2} by using the Lipschitz kernel assumption: for any $G \in \Phi_{1}^{N}$,
    \begin{equation}\label{eq:lem:helper:6}
        \begin{split}
            \int_G\WD(\mu_{x_1},(\bar{\mu}^{N})_{x_1})\bar{\mu}^{N}(dg) 
            &\leq \int_G \Big(\WD(\mu_{x_1},\mu_G) + \WD(\mu_G,\mu_G^N) + \WD(\mu_G^N,(\bar{\mu}^{N})_{x_1})\Big)\bar{\mu}^{N}(dg)  \\
            &\leq \int_G \Big(L\cdot\mathrm{diam}(G) + \WD(\mu_G,\mu_G^N) + \WD(\mu_G^N,\bar{\mu}_G^N)\Big) \bar{\mu}^{N}(dg) \\
            &\leq  \mu^{N}(G)\Big(L\cdot \mathrm{diam}(G) + \WD(\mu_{G},\mu_{G}^{N}) + \WD(\mu_G^N,\bar{\mu}_G^N)\Big).
        \end{split}
    \end{equation}
    Combining \eqref{eq:lem:helper:2}, \eqref{eq:lem:helper:5} and \eqref{eq:lem:helper:6}, we conclude for both uniform and non-uniform adapted empirical measures that there exists $C > 0$ such that 
    \begin{equation}\label{eq:lem:helper:7}
            \AWD(\mu,\bar{\mu}^{N}) \leq C\WD(\mu_1,\mu^{N}_1) + C\sum_{t=1}^{T-1}\sum_{G\in\Phi_{t}^{N}}\mu^{N}(G)\Big(\WD(\mu_{G},\mu_{G}^{N}) + \WD(\mu_G^N,\bar{\mu}_G^N)\Big)   + C\sum_{t=1}^{T-1}\sum_{G\in\Phi_{t}^{N}}\mu^{N}(G)\mathrm{diam}(G) .
    \end{equation}
    \textbf{Step 2:} In this step, we separate the proof between uniform and non-uniform adapted empirical measures.\\
    (i) Recall that we denote by $\hat{\mu}^{N}$ the uniform adapted empirical measure, then by \eqref{eq:estimate_helper:1} we have for all $t = 1,\dots,T$ that 
    \begin{equation}\label{eq:lem:helper:8}
        \sum_{G\in\hat{\Phi}_{t}^{N}}\mu^{N}(G)\mathrm{diam}(G) \leq t\sqrt{d}\Delta_N\quad \text{and}\quad \sum_{G\in\hat{\Phi}_{t}^{N}}\mu^{N}(G)\WD(\mu_G^N,\hat{\mu}_G^N) \leq \sqrt{d} \Delta_N.
    \end{equation}
    Combining \eqref{eq:lem:helper:7} and \eqref{eq:lem:helper:8}, we conclude that there exists $C > 0$ such that 
    \begin{equation}\label{eq:lem:helper:9}
        \begin{split}
            \AWD(\mu,\hat{\mu}^{N}) &\leq C\Big(\Delta_N  + \WD(\mu_{1},\mu^{N}_{1}) + \sum_{t=1}^{T-1}\sum_{G\in\hat{\Phi}_{t}^{N}}\mu^{N}(G)\WD(\mu_{G},\mu_{G}^{N})\Big).
        \end{split}
    \end{equation}
    (ii) Recall that we denote by $\check{\mu}^{N}$ the non-uniform adapted empirical measure, then by \eqref{eq:estimate_helper:2} we have for all $t = 1,\dots,T$ that 
    \begin{equation}\label{eq:lem:helper:10}
        \begin{split}
            \sum_{G\in\check{\Phi}_{t}^{N}}\mu^{N}(G)\mathrm{diam}(G)&= \sum_{j=0}^{\infty} \sum_{G \in \check{\Phi}_{t}^{N,j}}\mathrm{diam}(G)\mu^{N}(G)\leq  \sum_{j=0}^{\infty} \sum_{G \in \check{\Phi}_{t}^{N,j}}t\sqrt{d}\Delta_N 2^{j}\mu^{N}(G) = t\sqrt{d}\Delta_N\sum_{j=0}^{\infty} 2^{j}\mu^{N}(\mathcal{A}_{j}^{t}),
        \end{split}
    \end{equation}
    and 
    \begin{equation}\label{eq:lem:helper:11}
        \begin{split}
            \sum_{G\in\check{\Phi}_{t}^{N}}\mu^{N}(G)\WD(\mu_G^N,\check{\mu}_G^N) &\leq \sum_{G\in\check{\Phi}_{t}^{N}}\mu^{N}(G)\sum_{j=0}^{\infty}\sum_{G' \in \check{\Phi}_{1}^{N,j}}\mathrm{diam}(G')\mu^{N}_{G}(G') \leq \sum_{G\in\check{\Phi}_{t}^{N}}\mu^{N}(G)\sum_{j=0}^{\infty}\sum_{G' \in \check{\Phi}_{1}^{N,j}}t\sqrt{d}\Delta_N 2^{j}\mu^{N}_{G}(G')\\
            &= t\sqrt{d}\Delta_N \sum_{j=0}^{\infty}\sum_{G\in\check{\Phi}_{t}^{N}}\mu^{N}(G)\mu^{N}_{G}(\mathcal{A}_{j}^{1}) 2^j 
            \leq t\sqrt{d}\Delta_N\sum_{j=0}^{\infty} 2^{j}\mu_{t+1}^{N}(\mathcal{A}_{j}^{1}).
        \end{split}
    \end{equation}
    Noticing that $\min_{x \in \mathcal{A}_{j}^{t}}\Vert x \Vert \geq 2^{j-1}$ for all $j\in\N$ and $t=1,\dots,T-1$, we have
    \begin{equation}\label{eq:lem:helper:12}
        \begin{split}
        t\sqrt{d}\Delta_N\sum_{j=0}^{\infty} 2^{j}\mu^{N}(\mathcal{A}_{j}^{t})&\leq t\sqrt{d}\Delta_N + 2t\sqrt{d}\Delta_N\sum_{j=1}^{\infty} 2^{j-1} \mu^{N}(\mathcal{A}_{j}^{t})
        \leq t\sqrt{d}\Delta_N +  2t\sqrt{d}\Delta_N\int_{\R^{dT}} \Vert x_{1:t} \Vert \mu^{N}(dx)\\
        &\leq T\sqrt{d}\Delta_N +  2T\sqrt{d}\Delta_N\int_{\R^{dT}} \Vert x \Vert \mu^{N}(dx) =  T\sqrt{d}\Delta_N + \frac{2T\sqrt{d}\Delta_N}{N}\sum_{n=1}^{N}\Vert X^{(n)} \Vert,\\
        t\sqrt{d}\Delta_N\sum_{j=0}^{\infty} 2^{j}\mu^{N}_{t+1}(\mathcal{A}_{j}^{1}) &\leq t\sqrt{d}\Delta_N + 2t\sqrt{d}\Delta_N\sum_{j=1}^{\infty} 2^{j-1} \mu^{N}_{t+1}(\mathcal{A}_{j}^{1}) \leq t\sqrt{d}\Delta_N +  2t\sqrt{d}\Delta_N\int_{\R^{dT}} \Vert x_{t+1} \Vert \mu^{N}(dx)\\
        &\leq T\sqrt{d}\Delta_N +  2T\sqrt{d}\Delta_N\int_{\R^{dT}} \Vert x \Vert \mu^{N}(dx) =  T\sqrt{d}\Delta_N + \frac{2T\sqrt{d}\Delta_N}{N}\sum_{n=1}^{N}\Vert X^{(n)} \Vert.
        \end{split}
    \end{equation}
    Combining \eqref{eq:lem:helper:7}, \eqref{eq:lem:helper:10}, \eqref{eq:lem:helper:11} and \eqref{eq:lem:helper:12}, we conclude that there exists $C > 0$ such that, for all $N \in \N$,
    \begin{equation}\label{eq:lem:helper:13}
            \AWD(\mu,\check{\mu}^{N}) \leq C\Big(\Delta_N  + \WD(\mu_{1},\mu^{N}_{1}) + \sum_{t=1}^{T-1}\sum_{G\in\check{\Phi}_{t}^{N}}\mu^{N}(G)\WD(\mu_{G},\mu_{G}^{N})\Big) + \frac{C\Delta_N}{N}\sum_{n=1}^{N}\Vert X^{(n)} \Vert.
    \end{equation}
    Therefore, \eqref{eq:lem:helper:9} and \eqref{eq:lem:helper:13} together complete the proof of Lemma \ref{lem:helper}.
\end{proof}
In the next Lemma, we show that, conditioning on the adapted empirical law up to time $t$, the average kernel of the empirical law is the same as the empirical law of the average kernel.
\begin{lemma}\label{lem:helper2}
    Let $E$ be a Borel set in $\mathcal{P}(\R^d)$. Then, for all $G \in \Phi_{t}^{N}$,
    \begin{equation}
    \label{eq:lem:helper2}
        \P\Big[\mu_{G}^{N} \in E \Big\vert \mu^{N}(G'), \forall G'\in\Phi_{t}^{N} \Big] =  \P\Big[\mu_{G}^{N} \in E \Big\vert \mu^{N}(G) \Big] = \P\big[(\mu_{G})^{N\mu^{N}(G)} \in E \Big],
    \end{equation}
    where $(\mu_{G})^{N\mu^{N}(G)}$ is the empirical measure of $\mu_{G}$ with sample size $N\mu^{N}(G) \in \N$. \add{Moreover, $(\mu_G^N)_{G\in\Phi^N_t}$ are independent given $\{\mu^{N}(G') \colon G'\in\Phi_{t}^{N}\}$.}
\end{lemma}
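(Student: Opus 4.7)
The plan is to introduce a finer conditioning than the one in the statement, namely the full ``cell allocation'' of the samples, and then use the tower property to average it out. Concretely, for each $G \in \Phi_t^N$ let
\[
    I_G = \{\,n\in\{1,\dots,N\} : X^{(n)}_{1:t}\in G\,\},\qquad N_G = |I_G| = N\mu^N(G),
\]
so that $(I_G)_{G\in\Phi_t^N}$ is a random partition of $\{1,\dots,N\}$ and the $\sigma$-algebra $\mathcal{I} = \sigma\big((I_G)_{G\in\Phi_t^N}\big)$ is finer than the counting $\sigma$-algebra $\mathcal{N} = \sigma\big((\mu^N(G'))_{G'\in\Phi_t^N}\big)$. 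By the very definition of the average kernel applied to the empirical measure, one has $\mu_G^N = \frac{1}{N_G}\sum_{n\in I_G}\delta_{X^{(n)}_{t+1}}$ (using the convention $\delta_0$ when $N_G=0$, consistent with the convention adopted for $\mu_G$).

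Next I would compute the conditional law of $\mu_G^N$ given $\mathcal{I}$. Because the $X^{(n)}$ are i.i.d.\ from $\mu$, and because conditioning on the event $\{X^{(n)}_{1:t}\in G\}$ (without revealing its exact value) yields for $X^{(n)}_{t+1}$ precisely the distribution $\mu_G$—this is the definition
\[
    \mu_G(A) = \frac{1}{\mu(G)}\int_G \mu_{x_{1:t}}(A)\,\mu(dx_{1:t}) = \P\big[X_{t+1}\in A\,\big|\,X_{1:t}\in G\big]
\]
—the joint conditional law of $(X^{(n)}_{t+1})_{n=1}^N$ given $\mathcal{I}$ is a product of independent samples: for each cell $G$ the vector $(X^{(n)}_{t+1})_{n\in I_G}$ is i.i.d.\ $\mu_G$, and these blocks are mutually independent across $G$. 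This immediately identifies the conditional distribution of $\mu_G^N$ given $\mathcal{I}$ as that of $(\mu_G)^{N_G}$, and gives independence of the family $(\mu_G^N)_{G\in\Phi_t^N}$ given $\mathcal{I}$.

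The final step is to descend from $\mathcal{I}$ to $\mathcal{N}$. Since the conditional distribution of $\mu_G^N$ given $\mathcal{I}$ depends on $\mathcal{I}$ only through the single scalar $N_G$, and $N_G$ is $\mathcal{N}$-measurable, the tower property
\[
    \P[\mu_G^N\in E\mid\mathcal{N}] = \E\big[\P[\mu_G^N\in E\mid\mathcal{I}]\,\big|\,\mathcal{N}\big] = \P\big[(\mu_G)^{N_G}\in E\big]
\]
yields the second equality in \eqref{eq:lem:helper2} and, in particular, shows the conditional law depends only on $\mu^N(G)$, giving the first equality. The same tower argument, applied to a product indicator $\prod_G \mathbf{1}_{\{\mu_G^N\in E_G\}}$, transfers the independence from the $\mathcal{I}$-conditional level to the $\mathcal{N}$-conditional level.

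The one point requiring genuine care is the measure-theoretic setup of Step~2: one must justify that conditioning on the event $\{X^{(n)}_{1:t}\in G\}$—which is what $\mathcal{I}$ encodes about sample $n$—really does leave $X^{(n)}_{t+1}$ with law $\mu_G$ and keeps different samples independent. This follows from the independence of the $X^{(n)}$ and Fubini applied to the disintegration $\mu(dx_{1:t},dx_{t+1}) = \mu_{x_{1:t}}(dx_{t+1})\mu_{1:t}(dx_{1:t})$, but is the only place where one needs to be genuinely careful; the rest is purely formal manipulation of conditional expectations.
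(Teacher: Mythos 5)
Your proof is correct, and its core idea — refine the conditioning from the cell counts $\mathcal N=\sigma\big((\mu^N(G'))_{G'}\big)$ to the full cell allocation $\mathcal I=\sigma\big((I_G)_G\big)$, identify the conditional law of the $\mathcal I$-blocks $(X^{(n)}_{t+1})_{n\in I_G}$ as i.i.d.\ $\mu_G$ via the disintegration of $\mu$, and then descend by the tower property — is the natural argument behind this lemma. The paper itself gives essentially no proof: it just records that the argument in Lemma~3.3 of \cite{backhoff2022estimating} does not use compactness of the support and so carries over verbatim to $\R^{dT}$. Your write-up is therefore a self-contained version of what the paper only cites. Two small points worth keeping in mind when you polish it: (a) you correctly flag the $N_G=0$ convention $\mu_G^N=\delta_0=(\mu_G)^0$ so that the statement is well-posed on the event $\{\mu^N(G)=0\}$; (b) the identity $\mu_G^N=\tfrac1{N_G}\sum_{n\in I_G}\delta_{X^{(n)}_{t+1}}$ does require unwinding the definition of the averaged kernel of the atomic measure $\mu^N$ (each atom has mass $1/N$, so the normalization by $\mu^N(G)=N_G/N$ gives exactly $1/N_G$), which you take for granted but which is worth one line. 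The passage from $\mathcal I$-conditional to $\mathcal N$-conditional independence via the tower property applied to a product of indicators is exactly right and is the step most often glossed over, so it is good that you spell it out.
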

\begin{proof}
    \add{
    \eqref{eq:lem:helper2} follows from the proof of Lemma 3.3 in \cite{backhoff2022estimating}, where $\mu \in \calP([0,1]^{dT})$. Indeed, that proof does not depend on the compactness of the support of $\mu$, and can be generalized to any $\mu \in \calP(\R^{dT})$.}
\end{proof}
We conclude this preparatory section by estimating moments and exponential moments of shifted and scaled $\mu_G$. 
\begin{lemma}
    \label{lem:shift_amp}
    Let $\mu \in \mathcal{P}(\R^{dT})$.
    \begin{enumerate}[(i)]
        \item Assume Setting \ref{setting:1} or Setting \ref{setting:2}. Then, for all $t=1,\dots,T-1$, $G \in \Phi^{N}_t$, there exist $f_G \in \R^d$ and $\sigma_G,\lambda_G>0$ s.t. 
        \begin{equation}
        \label{eq:lem:shift_amp:0.1}
            M_p^{1/p}\Big(\frac{1}{\lambda_G}\frac{\mu_G - f_G}{\sigma_G}\Big) \leq 1,
        \end{equation}
        where $\sigma_G = \sup_{x_{1:t}\in G}\sigma_t(x_{1:t}) \vee 1$ and $\lambda_G = L_f \cdot \mathrm{diam}(G) + \big(\sup_{x_{1:t}\in \R^{dt}}M_{p}(\lambda_{x_{1:t}})\big)^{1/p}$.
        \item Assume Setting \ref{setting:3}. Then, for all $t=1,\dots,T-1$, $G \in \Phi^{N}_t$, there exist $f_G \in \R^d$ and $\sigma_G,\lambda_G>0$ s.t. 
        \begin{equation}
        \label{eq:lem:shift_amp:0.2}
        \mathcal{E}_{1,\alpha}\Big(\frac{1}{\lambda_G}\frac{\mu_G - f_G}{\sigma_G}\Big) \leq e \cdot \Big(\sup_{x_{1:t}\in \R^{dt}}\mathcal{E}_{\gamma,\alpha}(\lambda_{x_{1:t}})\Big),
        \end{equation}
        where $\sigma_G = \sup_{x_{1:t}\in G}\sigma_t(x_{1:t}) \vee 1$ and $\lambda_G = 2(L_f\mathrm{diam}(G)+1) + 2\gamma^{-\alpha}$.
    \end{enumerate}
\end{lemma}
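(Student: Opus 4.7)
\medskip
\noindent\textbf{Proof plan.} The starting point is the kernel decomposition from Definition~\ref{def:uniform_kernels}: for every $x_{1:t}\in\R^{dt}$, under the kernel $\mu_{x_{1:t}}$ we have the representation $f_t(x_{1:t}) + \sigma_t(x_{1:t})\,\varepsilon_{x_{1:t}}$ with $\varepsilon_{x_{1:t}}\sim \lambda_{x_{1:t}}$. Averaging over $x_{1:t}$ drawn from the conditional law $\mu|_G$, we realise $\mu_G$ as the distribution of $X = f_t(Z) + \sigma_t(Z)\,\varepsilon_Z$, where $Z\sim \mu|_G$ is supported in $G$ and, conditionally on $Z$, $\varepsilon_Z\sim \lambda_Z$. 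As the candidate shift I would always pick $f_G := f_t(z^*)$ for any fixed reference point $z^*\in G$ (e.g.\ the midpoint of the cube); by the $L_f$-Lipschitz assumption on $f_t$ we then have $\lVert f_t(Z)-f_G\rVert \leq L_f\,\mathrm{diam}(G)$ almost surely, and by the definition of $\sigma_G$ we have $\sigma_t(Z)\leq \sigma_G$ almost surely.

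\medskip
\noindent\textbf{Part (i).} Using the almost-sure decomposition $X-f_G = (f_t(Z)-f_G) + \sigma_t(Z)\,\varepsilon_Z$, I would apply Minkowski's inequality on $L^p(\P)$ to obtain
\begin{equation*}
M_p^{1/p}(\mu_G - f_G) \;\leq\; L_f\,\mathrm{diam}(G) \;+\; \sigma_G\cdot \Bigl(\sup_{x_{1:t}\in\R^{dt}} M_p(\lambda_{x_{1:t}})\Bigr)^{1/p}.
\end{equation*}
Since $\sigma_G\geq 1$, I can factor $\sigma_G$ out of the right-hand side, and recognise the bracketed sum as exactly $\lambda_G$, giving $M_p^{1/p}(\mu_G - f_G) \leq \sigma_G\,\lambda_G$. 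Rescaling by $\sigma_G\lambda_G$ yields \eqref{eq:lem:shift_amp:0.1}. This part is essentially routine; the only point to emphasise is that the trivial lower bound $\sigma_G\geq 1$ is what allows the $L_f\,\mathrm{diam}(G)$ contribution (which lives on a different scale than the noise) to be absorbed into the common prefactor $\sigma_G$.

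\medskip
\noindent\textbf{Part (ii).} I would start from the pointwise bound $\lVert X-f_G\rVert \leq L_f\,\mathrm{diam}(G) + \sigma_G\lVert \varepsilon_Z\rVert$, divide by $\sigma_G\lambda_G$ (using $\sigma_G\geq 1$), and split the resulting exponential via $e^{a+b}=e^ae^b$ and conditional expectation (tower property conditioning on $Z$). The first summand $L_f\,\mathrm{diam}(G)/\lambda_G$ is deterministic, and the specific choice $\lambda_G\geq 2(L_f\,\mathrm{diam}(G)+1)$ makes it at most $1/2$; exponentiating costs a harmless factor bounded by $\sqrt{e}$. For the stochastic term I would convert the exponent in $\lVert \varepsilon_Z\rVert$ into one in $\lVert \varepsilon_Z\rVert^{\alpha}$ by a Young-type inequality of the form $s \leq \gamma s^{\alpha} + c_{\alpha,\gamma}$, valid for all $s\geq 0$ with an explicit constant $c_{\alpha,\gamma}$ comparable to $\gamma^{-\alpha}$; the extra summand $2\gamma^{-\alpha}$ built into $\lambda_G$ is precisely what absorbs $c_{\alpha,\gamma}$ into a further factor bounded by $\sqrt{e}$. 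Taking the $Z$-conditional expectation and then the outer one yields the sup of $\mathcal{E}(\lambda_{x_{1:t}})$ on the right, while the two $\sqrt{e}$ factors combine into the advertised single $e$.

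\medskip
\noindent\textbf{Main difficulty.} The only real obstacle is the exponential case: the left-hand side is a $\lVert\cdot\rVert$-linear exponential moment while the assumed kernel bound is an $\lVert\cdot\rVert^{\alpha}$ exponential moment, so the proof hinges on the Young-type passage between these two scales. The tuning of $\lambda_G$ (the $+1$ pushing the deterministic exponent below $1/2$, and the $2\gamma^{-\alpha}$ absorbing the Young constant) is what makes everything fit inside a single factor of $e$; verifying these numerical choices is the most delicate step, while Part~(i) and the overall structure are straightforward applications of Minkowski and the Lipschitz hypothesis on $f_t$.
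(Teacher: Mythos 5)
Part (i) of your proposal matches the paper's argument: decompose $\mu_G - f_G$ as a mixture of shifted and scaled kernels, apply Minkowski in $L^p$, take $f_G = f_t(x_G)$ with $x_G$ a reference point of $G$ so the deterministic term is $\leq L_f\,\mathrm{diam}(G)$, and use $\sigma_G\geq 1$ together with $\sigma_t(x_{1:t})/\sigma_G\leq 1$ to absorb everything into $\lambda_G$. Whether you first divide by $\sigma_G$ and then by $\lambda_G$, or bound $M_p^{1/p}(\mu_G - f_G)$ and divide by $\sigma_G\lambda_G$ at the end, is immaterial; this part is fine.

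Part (ii), however, rests on a misreading of $\mathcal{E}_{1,\alpha}$. You interpret it, following the stated definition $\mathcal{E}_{\alpha,\gamma}(\mu)=\int\exp(\gamma\Vert x\Vert^{\alpha})\mu(dx)$ literally, as a linear exponential moment, and you therefore insert a Young-type passage $s\leq\gamma s^{\alpha}+c_{\alpha,\gamma}$ to trade the linear scale for the $\alpha$-power one. But the paper's own proof writes $\mathcal{E}_{\tilde\gamma,\alpha}\big(\tfrac{\mu_G - f_G}{\sigma_G}\big)=\int\int e^{\tilde\gamma\Vert x_{t+1}\Vert^{\alpha}}\cdots$ and later identifies $\mathcal{E}_{1,\alpha}\big(\tfrac{1}{\lambda_G}\tfrac{\mu_G-f_G}{\sigma_G}\big)=\mathcal{E}_{\lambda_G^{-\alpha},\alpha}\big(\tfrac{\mu_G-f_G}{\sigma_G}\big)$, i.e.\ in this lemma the two subscripts are used as (coefficient, exponent), and the target is $\int e^{\Vert x\Vert^{\alpha}}$. (The paper's notation is genuinely self-inconsistent, but the intended reading is forced by the downstream use: Lemma~\ref{lem:exp_shift_amp} feeds the output of this lemma into Lemma~\ref{lem:wass_mean_concentration}-(iii), which demands an $\alpha$-power exponential moment with $\alpha\geq 2$; a linear exponential moment bound would not suffice there.) As a consequence, no Young inequality is needed: both sides already live on the $\Vert\cdot\Vert^{\alpha}$ scale, and the $+2\gamma^{-\alpha}$ in $\lambda_G$ serves a different purpose. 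The paper's argument keeps $\tilde\gamma$ general, uses the convexity bound $(a+b)^{\alpha}\leq 2^{\alpha-1}(a^{\alpha}+b^{\alpha})$ followed by $e^{x+y}=e^{x}e^{y}$ to obtain $\mathcal{E}_{\tilde\gamma,\alpha}\big(\tfrac{\mu_G-f_G}{\sigma_G}\big)\leq\exp\{2^{\alpha}\tilde\gamma L_f^{\alpha}\mathrm{diam}(G)^{\alpha}\}\sup_{x_{1:t}}\mathcal{E}_{2^{\alpha}\tilde\gamma,\alpha}(\lambda_{x_{1:t}})$, then sets $\tilde\gamma=\lambda_G^{-\alpha}$ and checks that the chosen $\lambda_G$ makes the deterministic prefactor $\leq e$ and pushes the residual coefficient $2^{\alpha}\lambda_G^{-\alpha}$ below $\gamma$. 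Finally, even under your own reading the arithmetic would not close: the deterministic factor would be $e^{\alpha L_f\mathrm{diam}(G)/\lambda_G}$, and bounding it by $\sqrt{e}$ would need $\lambda_G\geq 2\alpha L_f\mathrm{diam}(G)$, which the stated $\lambda_G=2(L_f\mathrm{diam}(G)+1)+2\gamma^{-\alpha}$ does not guarantee once $\alpha>1$ and $\mathrm{diam}(G)$ is large.
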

\begin{proof}
    Recall that for all $t=1,\dots,T-1$, $G\in\Phi^N_t$, $f_G \in \R^{d}$, $\sigma_G > 0$, 
    \begin{equation*}
        \frac{\mu_G - f_G}{\sigma_G} = \int_{\R^{dt}} \frac{\mu_{x_{1:t}} - f_G}{\sigma_G}\mu\vert_G(dx_{1:t})
        =\int_{\R^{dt}} \frac{f_{t}(x_{1:t}) - f_G + \sigma_t(x_{1:t})\lambda_{x_{1:t}}}{\sigma_G}\mu\vert_G(dx_{1:t}).
    \end{equation*}    
    (i) First, we estimate the $p$-th moment under Setting~\ref{setting:1} or Setting~\ref{setting:2}. In this case,
    \begin{equation*}
    \begin{split}
        M_p^{1/p}\left(\frac{\mu_G - f_G}{\sigma_G}\right) &= \left(\int_{\R^{d}}\Vert x_{t+1}\Vert^p \left(\int_{\R^{dt}} \frac{f_{t}(x_{1:t}) - f_G + \sigma_t(x_{1:t})\lambda_{x_{1:t}}}{\sigma_G}\mu\vert_G(dx_{1:t})\right)(dx_{t+1})\right)^{1/p}\\
        &= \left(\int_{\R^{dt}}\int_{\R^{d}}\Vert x_{t+1}\Vert^p  \frac{f_{t}(x_{1:t}) - f_G + \sigma_t(x_{1:t})\lambda_{x_{1:t}}}{\sigma_G}(dx_{t+1})\mu\vert_G(dx_{1:t})\right)^{1/p}\\
        &= \left(\int_{\R^{dt}}\E\left[\left\Vert \frac{f_{t}(x_{1:t}) - f_G + \sigma_t(x_{1:t})\epsilon_{x_{1:t}}}{\sigma_G}\right\Vert^p  \right]\mu\vert_G(dx_{1:t})\right)^{1/p}\\
        &\leq \left(\int_{\R^{dt}}\left\Vert \frac{f_{t}(x_{1:t}) - f_G}{\sigma_G}\right\Vert^p  \mu\vert_G(dx_{1:t})\right)^{1/p} + \left(\int_{\R^{dt}}\E\left[\left\Vert \frac{\sigma_t(x_{1:t})\epsilon_{x_{1:t}}}{\sigma_G}\right\Vert^p  \right]\mu\vert_G(dx_{1:t})\right)^{1/p}.
    \end{split}
    \end{equation*}
    By choosing $x_G$ as the mid point of the cube $G$, $f_G = f_t(x_G)$ and $\sigma_G = \sup_{x_{1:t}\in G}\sigma_t(x_{1:t}) \vee 1$, we have
    \begin{equation*}
        \begin{split}
            M_p^{1/p}\left(\frac{\mu_G - f_G}{\sigma_G}\right) &\leq \left(\int_{\R^{dt}}\left\Vert f_{t}(x_{1:t}) - f_G\right\Vert^p  \mu\vert_G(dx_{1:t})\right)^{1/p} + \left(\int_{\R^{dt}}\E\left[\left\Vert \epsilon_{x_{1:t}}\right\Vert^p  \right]\mu\vert_G(dx_{1:t})\right)^{1/p}\\
            &\leq L_f \cdot \mathrm{diam}(G) + \Big(\sup_{x_{1:t}\in \R^{dt}}M_{p}(\lambda_{x_{1:t}})\Big)^{1/p} < \infty.
        \end{split}
    \end{equation*}
    Therefore, by choosing $\lambda_G = L_f \cdot \mathrm{diam}(G) + \Big(\sup_{x_{1:t}\in \R^{dt}}M_{p}(\lambda_{x_{1:t}})\Big)^{1/p}$, we get \eqref{eq:lem:shift_amp:0.1}.\\
    (ii) Next, we estimate the exponential moment under Setting~\ref{setting:3}. For any $\tilde{\gamma} > 0$,
    \begin{equation*}
    \begin{split}
        \mathcal{E}_{\tilde{\gamma},\alpha}\left(\frac{\mu_G - f_G}{\sigma_G}\right) 
        &= \int_{\R^{dt}}\int_{\R^{d}}e^{\tilde{\gamma} \Vert x_{t+1}\Vert^\alpha}  \frac{f_{t}(x_{1:t}) - f_G + \sigma_t(x_{1:t})\lambda_{x_{1:t}}}{\sigma_G}(dx_{t+1})\mu\vert_G(dx_{1:t})\\
        &= \int_{\R^{dt}}\E\left[\exp\left\{\tilde{\gamma} \left\Vert \frac{f_{t}(x_{1:t}) - f_G + \sigma_t(x_{1:t})\epsilon_{x_{1:t}}}{\sigma_G}\right\Vert^\alpha \right\} \right]\mu\vert_G(dx_{1:t})\\
        &\leq  \int_{\R^{dt}}\exp\left\{2^{\alpha}\tilde{\gamma}\left\Vert \frac{f_{t}(x_{1:t}) - f_G}{\sigma_G}\right\Vert^\alpha\right\}\E\left[\exp\left\{2^{\alpha}\tilde{\gamma}\left\Vert \frac{\sigma_t(x_{1:t})\epsilon_{x_{1:t}}}{\sigma_G}\right\Vert^\alpha\right\}  \right]\mu\vert_G(dx_{1:t}).
    \end{split}
    \end{equation*}
    By choosing $x_G$ as the mid point of $G$, $f_G = f_t(x_G)$ and $\sigma_G = \sup_{x_{1:t}\in G}\sigma_t(x_{1:t}) \vee 1$ as above, we have
    \begin{equation}
    \label{eq:lem:shift_amp:1}
        \begin{split}
        \mathcal{E}_{\tilde{\gamma},\alpha}\left(\frac{\mu_G - f_G}{\sigma_G}\right) &\leq \int_{\R^{dt}}\exp\{2^{\alpha}\tilde{\gamma}\left\Vert f_{t}(x_{1:t}) - f_G\right\Vert^\alpha\}\E\left[e^{2^{\alpha}\tilde{\gamma}\left\Vert \epsilon_{x_{1:t}}\right\Vert^\alpha}  \right]\mu\vert_G(dx_{1:t})\\
        &\leq \exp\{2^{\alpha}\tilde{\gamma}  L_f^{\alpha} \mathrm{diam}(G)^{\alpha}\}\Big(\sup_{x_{1:t}\in \R^{dt}}\mathcal{E}_{2^{\alpha}\tilde{\gamma},\alpha}(\lambda_{x_{1:t}})\Big)< \infty.
        \end{split}
    \end{equation}
    Finally, by choosing $\lambda_G = 2L_f\mathrm{diam}(G) + 2\gamma^{-\alpha}$ and $\tilde{\gamma} = \lambda_G^{-\alpha}$ in \eqref{eq:lem:shift_amp:1}, we have
    \begin{equation*}
    \begin{split}
        \mathcal{E}_{1,\alpha}\left(\frac{1}{\lambda_G}\frac{\mu_G - f_G}{\sigma_G}\right) &= \mathcal{E}_{\frac{1}{\lambda_G^{\alpha}},\alpha}\left(\frac{\mu_G - f_G}{\sigma_G}\right) \leq \exp\{\lambda_G^{-\alpha}2^{\alpha} L_f^{\alpha} \mathrm{diam}(G)^{\alpha}\} \Big(\sup_{x_{1:t}\in \R^{dt}}\mathcal{E}_{2^{\alpha}\lambda_G^{-\alpha},\alpha}(\lambda_{x_{1:t}})\Big)\\
        &\leq e \Big(\sup_{x_{1:t}\in \R^{dt}}\mathcal{E}_{\gamma,\alpha}(\lambda_{x_{1:t}})\Big) < \infty,
    \end{split}
    \end{equation*}
    which proves \eqref{eq:lem:shift_amp:0.2}.
\end{proof}

\section{Moment estimate} \label{sect:mom_est}
This section is dedicated to the proof of Theorem \ref{thm:moment_estimate}.
From Lemma~\ref{lem:helper} we see that essentially we need to estimate $\sum_{G\in\Phi_{t}^{N}}\mu^{N}(G)\WD(\mu_{G},\mu_{G}^{N})$. Since this estimation will be used multiple times in the paper, we state the key parts of estimation as two lemmas below. 

Recall that, for notational simplicity,  we present the proof when $d \geq 3$.
\label{sec:moment_estimate} 
\begin{lemma}\label{lem:mean_shift_amp}
Let $\mu \in \mathcal{P}(\R^{dT})$.
\begin{enumerate}[(i)]
    \item Assume Setting \ref{setting:1}. Then there exists $C>0$ s.t., for all $t=1,\dots,T-1$,
    \begin{equation*}
    \begin{split}
        \E\Big[\sum_{G\in \hat{\Phi}_t^{N}}\mu^{N}(G)\WD(\mu_{G},\mu_{G}^{N}) \Big]\leq C \E\Big[ \sum_{G\in \hat{\Phi}^{N}_t}\mu^{N}(G) (\Vert G\Vert^r +1)\Big((N\mu^N(G))^{-\frac{1}{d}} + (N\mu^N(G))^{-\frac{p-1}{p}}\Big)\Big].
    \end{split}
    \end{equation*}
    \item Assume Setting \ref{setting:2}. Then there exists $C>0$ s.t., for all $t=1,\dots,T-1$,
    \begin{equation*}
    \begin{split}
        \E\Big[\sum_{G\in \check{\Phi}_t^{N}}\mu^{N}(G)\WD(\mu_{G},\mu_{G}^{N}) \Big]\leq C \E\Big[ \sum_{G\in \check{\Phi}^{N}_t}\mu^{N}(G) (\Vert G\Vert^r +1)\Big((N\mu^N(G))^{-\frac{1}{d}} + (N\mu^N(G))^{-\frac{p-1}{p}}\Big)\Big] + C\Delta_N.
    \end{split}
    \end{equation*}
\end{enumerate}
\end{lemma}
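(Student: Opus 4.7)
The plan is to reduce, cube-by-cube, the estimate of $\WD(\mu_G,\mu_G^N)$ to the classical moment-based Wasserstein rate of Fournier--Guillin on $\R^d$, applied to an appropriately shifted and scaled version of $\mu_G$, and then to re-scale and sum over cubes. Fix $t\in\{1,\dots,T-1\}$ and $G\in\Phi_t^N$, and let $(f_G,\sigma_G,\lambda_G)$ be as in Lemma~\ref{lem:shift_amp}(i), so that $\tilde\mu_G:=\tfrac{1}{\lambda_G}\tfrac{\mu_G-f_G}{\sigma_G}$ satisfies $M_p(\tilde\mu_G)\leq 1$. By translation invariance and positive-scaling homogeneity of $\WD$,
\begin{equation*}
    \WD(\mu_G,\mu_G^N) \;=\; \sigma_G\lambda_G\,\WD\bigl(\tilde\mu_G,(\tilde\mu_G)^{N\mu^N(G)}\bigr),
\end{equation*}
where the second argument denotes the empirical measure of $\tilde\mu_G$ built from $N\mu^N(G)$ i.i.d.\ samples; its distributional identification, conditionally on the counts $\{\mu^N(G'):G'\in\Phi_t^N\}$, follows from Lemma~\ref{lem:helper2} applied after the affine change of variables.

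By Theorem~1 in \cite{fournier2015rate}, applied conditionally (using $M_p(\tilde\mu_G)\leq 1$ and $d\geq 3$),
\begin{equation*}
    \E\bigl[\WD(\mu_G,\mu_G^N)\bigm|\mu^N(G)\bigr] \;\leq\; C\,\sigma_G\lambda_G\Bigl((N\mu^N(G))^{-1/d}+(N\mu^N(G))^{-(p-1)/p}\Bigr).
\end{equation*}
Taking total expectation and summing over $G\in\Phi_t^N$ reduces both statements to a bound on $\sigma_G\lambda_G$. In both settings the growth assumption on $\sigma_t$ yields $\sigma_G\leq C(\Vert G\Vert^r+1)$; the difference between the two cases lies entirely in $\lambda_G$.

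Under Setting~\ref{setting:1}, $\mathrm{diam}(G)\leq t\sqrt{d}\Delta_N$ is uniformly bounded, so $\lambda_G\leq C$, giving $\sigma_G\lambda_G\leq C(\Vert G\Vert^r+1)$ and hence (i). Under Setting~\ref{setting:2}, for $G\in\check\Phi_t^{N,j}$ with $j\geq 1$ one has $\mathrm{diam}(G)\leq C\Delta_N 2^j$ and $2^{j-1}\leq\Vert G\Vert$, so $\lambda_G\leq C(\Delta_N\Vert G\Vert+1)$ (and trivially $\lambda_G\leq C$ for $j=0$). Thus $\sigma_G\lambda_G$ splits into a main term $\leq C(\Vert G\Vert^r+1)$, matching (i), plus a remainder $\leq C\Delta_N(\Vert G\Vert^{r+1}+\Vert G\Vert)$.

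The remaining step is to show that this remainder contributes only $O(\Delta_N)$. Since the rate factor is bounded by $2$, it suffices to bound $\E\bigl[\sum_{G\in\check\Phi_t^N}\mu^N(G)(\Vert G\Vert^{r+1}+\Vert G\Vert)\bigr]$ by a constant; using $\Vert G\Vert\leq C(\Vert x_{1:t}\Vert+1)$ for any $x_{1:t}\in G$, this expectation is dominated by $CM_{r+1}(\mu_{1:t})+C$, which is finite under Setting~\ref{setting:2} since $q>(r+1)p\geq r+1$. The main obstacle is precisely this ring-dependent factor $\lambda_G$ in the non-uniform case: its linear-in-$\Vert G\Vert$ contribution must be absorbed by an extra moment, which is exactly what the stronger condition $q>(r+1)p$ in Setting~\ref{setting:2} is designed to provide; once this absorption is carried out, the two displayed bounds are in hand.
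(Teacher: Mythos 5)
Your proof is correct and mirrors the paper's argument: the same affine reduction to a normalized kernel, Lemma~\ref{lem:helper2} for the conditional distributional identification, Theorem~1 of \cite{fournier2015rate} applied conditionally, and the split of $\sigma_G\lambda_G$ into a constant-order main term plus a $\mathrm{diam}(G)$-order remainder that yields the extra $C\Delta_N$ in the non-uniform case. The only difference is cosmetic: for the remainder you dominate $\sum_G\mu(G)(\Vert G\Vert^{r+1}+\Vert G\Vert)$ directly by the $(r+1)$-th moment, whereas the paper bounds the same quantity by decomposing over the rings $\mathcal{A}_j$ and invoking Markov's inequality; both hinge on exactly the same integrability $q>r+1$ guaranteed by Setting~\ref{setting:2}.
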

\begin{proof}
    First notice that, by shifting and scaling of $\WD$, for all $t=1,\dots,T-1$, $G\in\Phi^N_t$, $f_G \in \R^{d}$, $\sigma_G, \lambda_G > 0$ and $N_G \in \N$,
    \begin{equation}
    \label{eq:lem:mean_shift_amp:1}
        \WD(\mu_G,(\mu_G)^{N_G}) = \sigma_G\lambda_G\WD\bigg(\frac{1}{\lambda_G}\frac{\mu_G - f_G}{\sigma_G}, \Big(\frac{1}{\lambda_G}\frac{\mu_G - f_G}{\sigma_G}\Big)^{N_G}\bigg).
    \end{equation}
    Next, by Lemma~\ref{lem:shift_amp}, we know that there exist $f_G \in \R^{d}$ and $\sigma_G, \lambda_G > 0$ s.t. $M_p^{1/p}\Big(\frac{1}{\lambda_G}\frac{\mu_G - f_G}{\sigma_G}\Big) \leq 1$, so we can apply Theorem \ref{thm:wass_rate} to the measure $\frac{1}{\lambda_G}\frac{\mu_G - f_G}{\sigma_G}$. Thus, there exists $C >0$ s.t., for all $N_G \in \N$,
    \begin{equation}
    \label{eq:lem:mean_shift_amp:2}
    \begin{split}
        \WD\bigg(\frac{1}{\lambda_G}\frac{\mu_G - f_G}{\sigma_G}, \Big(\frac{1}{\lambda_G}\frac{\mu_G - f_G}{\sigma_G}\Big)^{N_G}\bigg) \leq C\Big(N_G^{-\frac{1}{d}} + N_G^{-\frac{p-1}{p}}\Big).
    \end{split}
    \end{equation}
    Note that, by the choice of $\sigma_G$ in Lemma~\ref{lem:shift_amp},  $\sigma_G \leq \sup_{x_{1:t}\in G}\sigma_t(x_{1:t}) + 1 \leq c\Vert G\Vert^r + 1$ because of the growth of $\sigma_t$ in Setting~\ref{setting:1} and Setting~\ref{setting:2}. Combining this with \eqref{eq:lem:mean_shift_amp:1} and \eqref{eq:lem:mean_shift_amp:2}, we obtain that there exists $C>0$ s.t., for all $N_G \in \N$,
    \begin{equation*}
        \WD(\mu_G,(\mu_G)^{N_G}) \leq C  (\Vert G\Vert^r+1)\left(\mathrm{diam}(G) + 1\right)\Big(N_G^{-\frac{1}{d}} + N_G^{-\frac{p-1}{p}}\Big).
    \end{equation*}
    Together with Lemma~\ref{lem:helper2}, this implies
    \begin{equation}
    \label{eq:lem:mean_shift_amp:3}
    \begin{split}
    &\quad\E\Big[\sum_{G\in \Phi^{N}_t}\mu^{N}(G)\WD(\mu_{G},\mu_{G}^{N}) \Big\vert \mu^{N}(G'), \forall G'\in\Phi_{t}^{N}\Big] \\
    &= \sum_{G\in \Phi^{N}_t}\mu^{N}(G)\E\left[\WD(\mu_G,(\mu_G)^{N\mu^N(G)})\Big\vert \mu^{N}(G'), \forall G'\in\Phi_{t}^{N}\right]\\
    &\leq C  \sum_{G\in \Phi^{N}_t}\mu^{N}(G)(\Vert G\Vert^r+1)\left(\mathrm{diam}(G) + 1\right)\mathrm{rate}_G(N),
    \end{split}
    \end{equation}
    where $\mathrm{rate}_G(N) = \Big((N\mu^N(G))^{-\frac{1}{d}} + (N\mu^N(G))^{-\frac{p-1}{p}}\Big)$. Therefore, by taking expectation of \eqref{eq:lem:mean_shift_amp:3}, we conclude that there exists $C > 0$ s.t. 
    \begin{equation}
    \label{eq:lem:mean_shift_amp:4}
    \begin{split}
        \E\Big[\sum_{G\in \Phi^{N}_t}\mu^{N}(G)\WD(\mu_{G},\mu_{G}^{N}) \Big] &\leq C  \E\Big[\sum_{G\in \Phi^{N}_t}\mu^{N}(G) (\Vert G\Vert^r +1)(\mathrm{diam}(G)+1)\,\mathrm{rate}_G(N)\Big].
    \end{split}
    \end{equation}
    (i)
    In the uniform case, for all $G\in\hat{\Phi}_t^{N}$, $\mathrm{diam}(G) =t\sqrt{d}\Delta_N\leq 1$. Thus, there exists $C > 0$ s.t. 
    \begin{equation*}
    \begin{split}
        \E\Big[\sum_{G\in \hat{\Phi}_t^{N}}\mu^{N}(G)\WD(\mu_{G},\mu_{G}^{N}) \Big] \leq C \E\Big[ \sum_{G\in \hat{\Phi}^{N}_t}\mu^{N}(G) (\Vert G\Vert^r +1)\,\mathrm{rate}_G(N)\Big].
    \end{split}
    \end{equation*}
    (ii) In the non-uniform case, notice that
    \begin{equation}
    \label{eq:lem:mean_shift_amp:5}
        \begin{split}
        \E\Big[\sum_{G\in \check{\Phi}^{N}_t}\mu^{N}(G) (\Vert G\Vert^r +1)\mathrm{diam}(G)\,\mathrm{rate}_G(N)\Big]\leq 2\sum_{G\in \check{\Phi}^{N}_t}\mu(G) (\Vert G\Vert^r +1)\mathrm{diam}(G).
        \end{split}
    \end{equation}
    Moreover, recall that for all $t=1,\dots,T-1$, $j\geq 0$ and $G \in \check{\Phi}^{N,j}_t$, we have $\mathrm{diam}(G) = t\sqrt{d}2^{j}\Delta_N$, $\Vert G \Vert \leq t\sqrt{d}2^j$ and $\mu(\mathcal{A}^{t}_{j})\leq \frac{M_{q}(\mu_{1:t})}{2^{q(j-1)}}$. Thus, there exists $C >0$ s.t.
    \begin{equation}
    \label{eq:lem:mean_shift_amp:6}
    \begin{split}
    \sum_{G\in \check{\Phi}^{N}_t}\mu(G) (\Vert G\Vert^r +1)\mathrm{diam}(G) &= \sum_{j\geq 0}\sum_{G\in \check{\Phi}^{N,j}_t}\mu(G) (\Vert G\Vert^r +1)\mathrm{diam}(G)\\
    &\leq \Delta_N\sum_{j\geq 0}\frac{M_{q}(\mu_{1:t})}{2^{q(j-1)}} ((t\sqrt{d}2^{j})^r+1)t\sqrt{d}2^{j} \leq C \Delta_N,
    \end{split}
    \end{equation}
    because $q > 1+ r$ in Setting~\ref{setting:2}. Combining \eqref{eq:lem:mean_shift_amp:4}, \eqref{eq:lem:mean_shift_amp:5} and \eqref{eq:lem:mean_shift_amp:6}, we obtain that there exists $C>0$ s.t. 
    \begin{equation*}
    \begin{split}
        \E\Big[\sum_{G\in \check{\Phi}_t^{N}}\mu^{N}(G)\WD(\mu_{G},\mu_{G}^{N}) \Big] \leq C \E\Big[ \sum_{G\in \check{\Phi}^{N}_t}\mu^{N}(G) (\Vert G\Vert^r +1)\,\mathrm{rate}_G(N)\Big] + C\Delta_N.
    \end{split}
    \end{equation*}
\end{proof}
\begin{lemma}
\label{lem:common_estimate}
Let $\mu\in\calP(\R^{dT})$, $a \in [0,1)$, $b > 0$, $c>0$ and $r\geq 0$. Then there exists $C>0$ s.t., for all $t=1,\dots,T-1$,
\begin{equation*}
    \left\{\begin{aligned}
        \sum_{G\in\hat{\Phi}^N_t}(\Vert G \Vert^r + 1)^{b}(\mathrm{diam}(G)+1)^{c}\mu(G)^{1-a} &\leq C N^{\frac{ta}{T}}\sum_{j\geq 0}2^{j\big(rb + dta\big)}\mu(\calA_j)^{1-a},\\
        \sum_{G\in\check{\Phi}^N_t}(\Vert G \Vert^r + 1)^{b}(\mathrm{diam}(G)+1)^{c}\mu(G)^{1-a} &\leq C N^{\frac{ta}{T}}\sum_{j\geq 0}2^{j\big(rb + c\big)}\mu(\calA_j)^{1-a}.
    \end{aligned}\right.
\end{equation*}
\end{lemma}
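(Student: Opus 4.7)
The plan is to decompose the outer sum by cubic rings, bound the geometric factors $(\|G\|^r+1)^b$ and $(\mathrm{diam}(G)+1)^c$ uniformly in $G$ within each ring, and then apply concavity to reduce $\sum_G \mu(G)^{1-a}$ to $\mu(\calA_j)^{1-a}$ up to the $j$-th power of the cube count.

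\textbf{Step 1: decomposition and ring-wise geometric bounds.} Writing $\Phi^N_t = \bigsqcup_{j\geq 0}\Phi^{N,j}_t$ and using that for $G \in \Phi^{N,j}_t$ we have $\|G\| \leq t\sqrt{d}\,2^{j}$, there is a constant $C=C(t,d,r,b)$ such that $(\|G\|^r+1)^b \leq C\,2^{jrb}$ uniformly over $G \in \Phi^{N,j}_t$. The two cases diverge for the diameter factor:
\begin{equation*}
    (\mathrm{diam}(G)+1)^c \leq
    \begin{cases}
        C, & G \in \hat{\Phi}^{N,j}_t, \text{ since } \mathrm{diam}(G) \leq t\sqrt{d}\,\Delta_N \leq t\sqrt{d},\\
        C\,2^{jc}, & G \in \check{\Phi}^{N,j}_t, \text{ since } \mathrm{diam}(G) \leq t\sqrt{d}\,2^{j}\Delta_N \leq t\sqrt{d}\,2^{j}.
    \end{cases}
\end{equation*}

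\textbf{Step 2: concavity step.} Since $a\in[0,1)$, the map $x \mapsto x^{1-a}$ is concave, so by Jensen's inequality applied to the uniform measure on $\Phi^{N,j}_t$, together with $\sum_{G \in \Phi^{N,j}_t}\mu(G) \leq \mu(\calA^t_j)$,
\begin{equation*}
    \sum_{G \in \Phi^{N,j}_t}\mu(G)^{1-a}
    \leq |\Phi^{N,j}_t|\,\Bigl(\tfrac{1}{|\Phi^{N,j}_t|}\sum_{G}\mu(G)\Bigr)^{1-a}
    \leq |\Phi^{N,j}_t|^{a}\,\mu(\calA^t_j)^{1-a}.
\end{equation*}
This is the key step; the rest is essentially plugging in the cube counts from \eqref{eq:estimate_helper:1}--\eqref{eq:estimate_helper:2}.

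\textbf{Step 3: assembling the bounds.} Combining Steps 1 and 2, in the uniform case one gets
\begin{equation*}
    \sum_{G\in\hat{\Phi}^N_t}(\|G\|^r+1)^{b}(\mathrm{diam}(G)+1)^{c}\mu(G)^{1-a}
    \leq C\sum_{j\geq 0} 2^{jrb}\,\bigl(2^{j+1}N^{\frac{1}{dT}}\bigr)^{dta}\mu(\calA^t_j)^{1-a},
\end{equation*}
and since $(N^{1/(dT)})^{dta} = N^{ta/T}$, this yields the desired $N^{ta/T}\sum_j 2^{j(rb+dta)}\mu(\calA^t_j)^{1-a}$. For the non-uniform case, the extra $2^{jc}$ from the diameter factor combines with $2^{jrb}$, while $|\check{\Phi}^{N,j}_t|^{a} \leq (4N^{1/(dT)})^{dta}$ is now independent of $j$, producing the bound $N^{ta/T}\sum_j 2^{j(rb+c)}\mu(\calA^t_j)^{1-a}$.

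The proof is mostly bookkeeping; the only real ingredient is the Jensen/concavity step in Step 2, which is what forces the $|\Phi^{N,j}_t|^{a}$ factor and hence the $N^{ta/T}$ prefactor, and which crucially requires the restriction $a<1$ (for $a=1$ concavity degenerates and one would have to use a different estimate).
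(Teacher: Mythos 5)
Your proposal is correct and follows essentially the same route as the paper's proof: decompose the sum over cubic rings, bound the geometric factors uniformly within each ring by powers of $2^j$, apply Jensen's inequality for the concave map $x\mapsto x^{1-a}$ to replace $\sum_{G}\mu(G)^{1-a}$ by $\vert\Phi^{N,j}_t\vert^{a}\mu(\calA^t_j)^{1-a}$, and then insert the cube counts from \eqref{eq:estimate_helper:1}--\eqref{eq:estimate_helper:2}. The only cosmetic difference is that you spell out the Jensen step explicitly while the paper writes the equivalent substitution $\mu(G)\leadsto\mu(\calA_j)/\vert\Phi^{N,j}_t\vert$ more tersely.
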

\begin{proof}
For all $t=1,\dots,T-1$, $j\geq 0$ and $G \in \hat{\Phi}^{N,j}_t$, we have $\mathrm{diam}(G) = t\sqrt{d}N^{\frac{1}{dT}}$ , $\Vert G \Vert \leq t\sqrt{d}2^j$ and $\vert \hat{\Phi}^{N,j}_t \vert \leq (2^{j+1}N^{\frac{1}{dT}})^{dt}$. Thus, there exists $C >0$ s.t.
\begin{equation*}
        \begin{split}
            &\quad~\sum_{G\in\hat{\Phi}^N_t}(\Vert G \Vert^r + 1)^{b}(\mathrm{diam}(G)+1)^{c}\mu(G)^{1-a} \leq C \sum_{j=0}^{\infty}\sum_{G\in \hat{\Phi}^{N,j}_{t}}(\Vert G \Vert^r+1)^{b}\mu(G)^{1-a} \\
            &\leq C \sum_{j=0}^{\infty}\sum_{G\in \hat{\Phi}^{N,j}_{t}}2^{rb j}\mu(G)^{1-a} \leq C \sum_{j=0}^{\infty}2^{rb j}\sum_{G\in \hat{\Phi}^{N,j}_{t}}\left(\frac{\mu(\calA_j)}{\vert \hat{\Phi}^{N,j}_t \vert}\right)^{1-a} \\
            &= C \sum_{j=0}^{\infty}2^{rb j}\vert \hat{\Phi}^{N,j}_t \vert^{a}\mu(\calA_j)^{1-a} \leq C \sum_{j=0}^{\infty}2^{rb j}\cdot  N^{\frac{ta}{T}} 2^{jdta} \cdot \mu(\calA_j)^{1-a} \leq C N^{\frac{ta}{T}}\sum_{j\geq 0}2^{j\big(rb + dta\big)}\mu(\calA_j)^{1-a}.
        \end{split}
    \end{equation*}
For all $t=1,\dots,T-1$, $j\geq 0$ and $G \in \check{\Phi}^{N,j}_t$, we have $\mathrm{diam}(G) = t\sqrt{d}2^j N^{\frac{1}{dT}}$ , $\Vert G \Vert \leq t\sqrt{d}2^j$ and $\vert \check{\Phi}^{N,j}_t \vert \leq (4N^{\frac{1}{dT}})^{dt}$. Thus, there exists $C >0$ s.t.
\begin{equation*}
        \begin{split}
            &\quad~\sum_{G\in\check{\Phi}^N_t}(\Vert G \Vert^r + 1)^{b}(\mathrm{diam}(G)+1)^{c}\mu(G)^{1-a} \leq C \sum_{j=0}^{\infty}\sum_{G\in \check{\Phi}^{N,j}_{t}}2^{rb j} \cdot 2^{c j} \cdot \mu(G)^{1-a} \\
            &\leq C \sum_{j=0}^{\infty}\sum_{G\in \check{\Phi}^{N,j}_{t}}2^{(rb + c)j}\mu(G)^{1-a} \leq C \sum_{j=0}^{\infty}2^{(rb +c)j}\sum_{G\in \check{\Phi}^{N,j}_{t}}\left(\frac{\mu(\calA_j)}{\vert \check{\Phi}^{N,j}_t \vert}\right)^{1-a} \\
            &= C \sum_{j=0}^{\infty}2^{(rb+c) j}\vert \check{\Phi}^{N,j}_t \vert^{a}\mu(\calA_j)^{1-a} \leq C \sum_{j=0}^{\infty}2^{(rb+c)j}\cdot  N^{\frac{ta}{T}} \cdot \mu(\calA_j)^{1-a} \leq C N^{\frac{ta}{T}}\sum_{j\geq 0}2^{j\big(rb + c\big)}\mu(\calA_j)^{1-a}.
        \end{split}
    \end{equation*}
\end{proof}
Before proving our moment estimates for the adapted Wasserstein distance, we recall the classical Wasserstein distance case.
\begin{theorem}[Theorem 1 in \cite{fournier2015rate}]
    \label{thm:wass_rate}
    Let $\mu \in \mathcal{P}(\R^d)$ and assume there exists $p > 1$ s.t. $M_{p}(\mu) < \infty$. Then there exists $C >0$ s.t., for all $N \in \N$,
    \begin{equation*}
        \E\big[\WD_{1}(\mu,\mu^{N})\big] \leq CM_{p}^{1/p}(\mu)\Big(N^{-\frac{1}{d}} + N^{-\frac{p-1}{p}}\Big).
    \end{equation*}
\end{theorem}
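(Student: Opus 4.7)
The plan is to follow the multi-scale approach of Dereich-Scheutzow-Schottstedt, which underlies the Fournier-Guillin proof and reduces $\E \WD_1(\mu,\mu^N)$ to a weighted sum of signed-mass discrepancies on nested dyadic cubes. The approach has three ingredients: a telescoping / Pierce-type upper bound for $\WD_1$, a binomial variance bound for the empirical mass of each cube, and an annular decomposition that extracts the $p$-th moment tail.

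First, I would fix dyadic cubes $Q_{n,k} = 2^{-n}(k + [0,1)^d)$ for $n \geq 0$, $k \in \mathbb{Z}^d$, together with centre-projections $\pi_n \colon \R^d \to \R^d$. Since $\Vert x - \pi_n(x) \Vert \leq C 2^{-n}$, telescoping through $\pi_n \# \mu$ and $\pi_n \# \mu^N$ and using the triangle inequality yields an estimate of the form
\begin{equation*}
    \WD_1(\mu,\mu^N) \leq C \sum_{n \geq 0} 2^{-n} \sum_{k \in \mathbb{Z}^d} \bigl\vert \mu(Q_{n,k}) - \mu^N(Q_{n,k}) \bigr\vert.
\end{equation*}
Taking expectations and using $\E \bigl\vert \mu(Q) - \mu^N(Q) \bigr\vert \leq \min\bigl(\mu(Q), \sqrt{\mu(Q)/N}\bigr)$, which follows from $N\mu^N(Q) \sim \mathrm{Bin}(N,\mu(Q))$, and combining with Cauchy-Schwarz, each slice of the inner sum restricted to a set $A$ is controlled by $\min\bigl(\mu(A), \sqrt{\vert \Phi_{n,A} \vert \, \mu(A)/N}\bigr)$, where $\vert \Phi_{n,A} \vert$ counts the scale-$n$ cubes meeting $A$.

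Next I would handle unbounded support via an annular split: let $A_0 = B(0,1)$ and $A_j = B(0,2^j) \setminus B(0,2^{j-1})$, so by Markov's inequality $\mu(A_j) \leq M_p(\mu)\,2^{-jp}$. At scale $n$, at most $C 2^{d(j+n)}$ cubes of side $2^{-n}$ meet $A_j$, so the contribution of $A_j$ to the telescope at scale $n$ is at most $\min\bigl(\mu(A_j),\ C 2^{d(j+n)/2}\sqrt{\mu(A_j)/N}\bigr)$. For each $j$ I would pick the threshold scale $n_j$ where the two sides of the minimum balance, and summing the resulting two geometric series over $(j,n)$ produces the target bound $C M_p(\mu)^{1/p}\bigl(N^{-1/d} + N^{-(p-1)/p}\bigr)$.

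The main obstacle is the simultaneous extraction of the two rates: the \emph{bulk} rate $N^{-1/d}$ comes from inner annuli where the dimensional effect dominates, while the \emph{tail} rate $N^{-(p-1)/p}$ comes from outer annuli where moment decay dominates. One must choose $n_j$ carefully (roughly so that $2^{d(j+n_j)} \mu(A_j)/N \sim \mu(A_j)^2$) and verify summability of both halves of the resulting double sum in $(j,n)$; for $d \geq 3$ the inner sum converges directly, while for $d \in \{1,2\}$ one picks up a logarithmic correction that is absorbed by the paper's convention $\calD(d)$. Apart from this balancing, the remaining work is routine bookkeeping.
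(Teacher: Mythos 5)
This theorem is cited, not proved, in the paper: it is quoted as Theorem~1 of \cite{fournier2015rate} and used as a black box (in the proofs of Lemma~\ref{lem:mean_shift_amp} and Theorem~\ref{thm:moment_estimate}), so there is no internal proof for your sketch to be measured against. Your outline is a reasonable reconstruction of the Fournier--Guillin / Dereich--Scheutzow--Schottstedt argument: the multiscale Pierce-type bound on $\WD_1$ in terms of cube-mass discrepancies, the binomial-variance estimate $\E\big|\mu(Q)-\mu^N(Q)\big|\leq 2\min\big(\mu(Q),\sqrt{\mu(Q)/N}\big)$, Cauchy--Schwarz over the scale-$n$ cubes meeting a given set, and the annular split governed by $\mu(A_j)\leq M_p(\mu)\,2^{-jp}$ are precisely the ingredients in that proof. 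The bookkeeping you defer is genuine: for each annulus $j$ one must choose the threshold scale $n_j$ where the two branches of the minimum cross, sum both geometric tails in $(j,n)$, and normalize (e.g.\ via the dilation $x\mapsto M_p(\mu)^{-1/p}x$, under which $\WD_1$ is $1$-homogeneous) to extract the prefactor $M_p(\mu)^{1/p}$ rather than some other power. Two minor remarks: Fournier--Guillin derive the multiscale inequality from a nested-partition hierarchical-transport construction (their Lemma~5) rather than by telescoping center-projections, though both yield the same estimate; and, as you correctly flag, the rate $N^{-1/d}$ in the statement is the sharp Fournier--Guillin rate only for $d\geq 3$ (for $d=1,2$ it is $N^{-1/2}$ and $N^{-1/2}\log N$ respectively), an imprecision the paper absorbs into the $\calD(d)$ convention used in its own proofs.
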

\begin{proof}[Proof of Theorem \ref{thm:moment_estimate}]
\textbf{Step 1:} In this step, all statements hold for both uniform and non-uniform adapted empirical measures. Taking expectation in \eqref{eq:lem:helper:0} and \eqref{eq:lem:helper:1}, we have that there exists $C > 0$ such that 
\begin{equation}\label{eq:thm:moment_estimate:1}
    \begin{split}
        \E[\AWD(\mu,\bar{\mu}^{N})] \leq C\E[\WD(\mu_1,\mu^{N}_1)] + C\sum_{t=1}^{T-1}\E\big[\sum_{G\in\Phi_{t}^{N}}\mu^{N}(G)\WD(\mu_{G},\mu_{G}^{N})\big] + C\Delta_N,
    \end{split}
\end{equation}
since $\frac{1}{N}\sum_{n=1}^{N}\E[X^{(n)}] = \int \Vert x \Vert \mu(dx) < \infty$. First, we estimate $\E[\WD(\mu_1,\mu^{N}_1)]$ in \eqref{eq:thm:moment_estimate:1} by applying Theorem~\ref{thm:wass_rate} to $\mu_1$, and we obtain that there exists $C > 0$ such that, for all $N \in \N$,
\begin{equation}\label{eq:thm:moment_estimate:2}
    \E\big[\WD(\mu_{1},\mu^{N}_{1})\big] \leq CN^{-\frac{1}{d}} + CN^{-\frac{q-1}{q}}.
\end{equation}
Then, we estimate $\E\big[\sum_{G\in \Phi^{N}_t}\mu^{N}(G)\WD(\mu_{G},\mu_{G}^{N})\big]$ in \eqref{eq:thm:moment_estimate:1} for all $t = 1,\dots, T-1$.By Lemma~\ref{lem:mean_shift_amp}, there exists $C>0$ s.t. 
    \begin{equation}
    \label{eq:thm:moment_estimate:3}
        \E\Big[\sum_{G\in \Phi_t^{N}}\mu^{N}(G)\WD(\mu_{G},\mu_{G}^{N}) \Big] \leq C \E\Big[ \sum_{G\in \Phi^{N}_t}\mu^{N}(G) (\Vert G\Vert^r +1)\Big((N\mu^{N}(G))^{-\frac{1}{d}} + (N\mu^{N}(G))^{-\frac{p-1}{p}}\Big)\Big] + C\Delta_N.
    \end{equation}
    Furthermore, since $x \mapsto x \Big(x^{-\frac{1}{d}} + x^{-\frac{p-1}{p}}\Big)$ is concave and $\E[N\mu^{N}(G)] = N\mu(G)$ for all $G \in \Phi_{t}^{N}$, by Jensen's inequality we have that 
    \begin{equation}\label{eq:thm:moment_estimate:4}
        \begin{split}
            &N^{-1}\E[\sum_{G\in \Phi^{N}_t}\add{(\Vert G \Vert^r+1)} (N\mu^{N}(G)) ((N\mu^{N}(G))^{-\frac{1}{d}} + (N\mu^{N}(G))^{-\frac{p-1}{p}}) ]\\
            &\qquad\qquad \leq N^{-\frac{1}{d}}\sum_{G\in \Phi^{N}_t}\add{(\Vert G \Vert^r+1)} \mu(G)^{1-\frac{1}{d}} + N^{-\frac{p-1}{p}}\sum_{G\in \Phi^{N}_t}\add{(\Vert G \Vert^r+1)}\mu(G)^{1-\frac{p-1}{p}}.  
        \end{split}
    \end{equation}
    \textbf{Step 2:} In this step, we separate the proof for the uniform adapted empirical measure in Setting \ref{setting:1} and for the non-uniform adapted empirical measure in Setting \ref{setting:2}. Notice that $\mu(\mathcal{A}^{t}_{j})\leq \frac{M_{q}(\mu_{1:t})}{2^{q(j-1)}}$. Then, by Lemma~\ref{lem:common_estimate} (with $b = 1, c = 0$), for all $a \in [0,1)$, there exists $C > 0$ s.t. 
    \begin{equation}
    \label{eq:thm:moment_estimate:5.1}
    \begin{split}
            &\sum_{G\in \hat{\Phi}^{N}_{t}}\add{(\Vert G \Vert^r+1)} \mu(G)^{1-a} \leq CN^{\frac{ta}{T}}\sum_{j = 0}^{\infty}2^{j(r + dta-q(1-a))},\\
            &\sum_{G\in \check{\Phi}^{N}_{t}}\add{(\Vert G \Vert^r+1)} \mu(G)^{1-a} \leq CN^{\frac{ta}{T}}\sum_{t=0}^{\infty}2^{j(r - q(1-a))}.
    \end{split}
    \end{equation}
    (i) Assume Setting \ref{setting:1}.
    Since  $q > \max\left\{\frac{d}{d-1}(r + T -1),rp + d(T-1)(p-1)\right\}$, then for $a = 1/d$ and $a = \frac{p-1}{p}$, $r + dta-q(1-a) < 0$ holds for all $t=1,\dots,T-1$, thus the first series in \eqref{eq:thm:moment_estimate:5.1} converges. \\
    (ii) Assume Setting \ref{setting:2}. Since  $q > \max\left\{\frac{rd}{d-1},rp \right\}$, then for $a = 1/d$ and $a = \frac{p-1}{p}$, $r -q(1-a) < 0$ holds for all $t=1,\dots,T-1$, thus the second series in \eqref{eq:thm:moment_estimate:5.1} converges. \\
    \textbf{Step 3:} In this step, all statements hold for both uniform and non-uniform adapted empirical measures. First, by \eqref{eq:thm:moment_estimate:5.1} in Step 2, there exists $C>0$ s.t., for all $t=1,\dots,T-1$,
    \begin{equation}
    \label{eq:thm:moment_estimate:6}
        \sum_{G\in \Phi^{N}_{t}}\add{(\Vert G \Vert^r+1)}\mu(G)^{1-a} \leq CN^{\frac{ta}{T}}.
    \end{equation}
    Combining \eqref{eq:thm:moment_estimate:3}, \eqref{eq:thm:moment_estimate:4} and \eqref{eq:thm:moment_estimate:6}, with $a = 1/d$ and $a = \frac{p-1}{p}$, there exists $C >0$ s.t. for all $t = 1,\dots,T-1$,
    \begin{equation} \label{eq:thm:moment_estimate:8}
            \E\big[\sum_{G\in \Phi^{N}_t}\mu^{N}(G)\WD(\mu_{G},\mu_{G}^{N}) \big] \leq CN^{\frac{t}{dT}-\frac{1}{d}} + CN^{\frac{t}{T}\frac{p-1}{p}-\frac{p-1}{p}} \add{+ C\Delta_N}
        \leq CN^{-\frac{1}{dT}} + CN^{-\frac{p-1}{pT}}.
    \end{equation}
    Combining \eqref{eq:thm:moment_estimate:1}, \eqref{eq:thm:moment_estimate:2} and \eqref{eq:thm:moment_estimate:8}, we conclude that there exists a constant $C > 0$ such that, for all $N \in \N$,
    \begin{equation*}
        \begin{split}
            \E\big[\AWD(\mu,\bar{\mu}^{N})\big] &\leq C\Delta_N + \E\big[C\WD(\mu_{1},\mu^{N}_1) + C\sum_{t=1}^{T-1}\sum_{G\in \Phi^{N}_t}\mu^{N}(G)\WD(\mu_{G},\mu_{G}^{N}) \big] \\ 
        &\leq CN^{-\frac{1}{dT}} + CN^{-\frac{q-1}{q}} + CN^{-\frac{p-1}{pT}}\leq CN^{-\frac{1}{dT}} + CN^{-\frac{p-1}{pT}},
        \end{split}
    \end{equation*}
    where we used the fact that  $\frac{q-1}{q} > \frac{p-1}{pT}$, which is \add{ satisfied when $q \geq \frac{T}{T-1}$, which holds }in both Setting \ref{setting:1} and Setting \ref{setting:2}.
\end{proof}

\section{Concentration inequality I}\label{sec:concentration1}
This section is devoted to the proof of Theorem \ref{thm:first_concentration}.
Similar to what done for the moment estimation in the previous section, we first prove a result, analogous to Lemma~\ref{lem:mean_shift_amp}, to address the uniform integrability issue.
\begin{lemma}\label{lem:deviation_shift_amp}
Let $\mu \in \mathcal{P}(\R^{dT})$. Assume Setting \ref{setting:1} or Setting \ref{setting:2}. Then, for all $\epsilon\in(0,p)$, there exists $C>0$ s.t., for all $t=1,\dots,T-1$, $j\geq 0$, $G \in \Phi^{N,j}_t$, $x>0$ and $N_G \in \N$,
    \begin{equation}\label{eq.conc_ui}
         \P\left[\WD(\mu_G,(\mu_G)^{N_G}) \geq  x\right]\leq C(\Vert G\Vert^r+1)^{p-\epsilon}(\mathrm{diam}(G)+1)^{p-\epsilon}N_G^{1-p+\epsilon}x^{-p + \epsilon}.
    \end{equation}
\end{lemma}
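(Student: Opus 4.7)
My plan is to reduce the deviation bound to the classical Wasserstein concentration result of \cite{fournier2015rate} via the affine normalization of Lemma~\ref{lem:shift_amp}. The strategy parallels the proof of Lemma~\ref{lem:mean_shift_amp}: first rescale $\mu_G$ so that it has unit $p$-th moment, then apply a uniform deviation bound in the normalized coordinates, and finally unwind the scaling. Concretely, Lemma~\ref{lem:shift_amp}(i), valid under either Setting~\ref{setting:1} or~\ref{setting:2}, supplies $f_G \in \R^d$ and $\sigma_G,\lambda_G>0$ such that $\tilde\mu_G := \frac{1}{\lambda_G}\frac{\mu_G - f_G}{\sigma_G}$ satisfies $M_p(\tilde\mu_G)\leq 1$, together with $\sigma_G \leq c\Vert G\Vert^r + 1$ and $\lambda_G \leq L_f\,\mathrm{diam}(G)+C$. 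Translation invariance and linear scaling of $\WD$ then yield the identity
\[
\WD(\mu_G,(\mu_G)^{N_G}) = \sigma_G\lambda_G\,\WD(\tilde\mu_G,\tilde\mu_G^{N_G}).
\]

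Next I would apply the polynomial deviation tail from Theorem~2 of \cite{fournier2015rate} to $\tilde\mu_G$. Because $M_p(\tilde\mu_G)\leq 1$ with $p>2$, that result provides, for every $\epsilon \in (0,p)$, a constant $C=C(d,p,\epsilon)$ such that for all $y>0$ and $N_G\geq 1$,
\[
\P\big[\WD(\tilde\mu_G,\tilde\mu_G^{N_G}) \geq y\big] \leq C\,N_G\,(N_G y)^{-(p-\epsilon)} = C\,N_G^{1-p+\epsilon}\,y^{-(p-\epsilon)}.
\]
Only the polynomial part of Fournier--Guillin's full estimate is needed; the exponential contribution is bounded by $1$ and can be absorbed into the constant wherever the polynomial bound is meaningful. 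Substituting $y = x/(\sigma_G\lambda_G)$ and invoking the scaling identity gives
\[
\P\big[\WD(\mu_G,(\mu_G)^{N_G}) \geq x\big] \leq C\,(\sigma_G\lambda_G)^{p-\epsilon}\,N_G^{1-p+\epsilon}\,x^{-(p-\epsilon)},
\]
and using $(\sigma_G\lambda_G)^{p-\epsilon}\leq C(\Vert G\Vert^r+1)^{p-\epsilon}(\mathrm{diam}(G)+1)^{p-\epsilon}$ yields~\eqref{eq.conc_ui}.

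The main obstacle I expect is verifying that the constant in Fournier--Guillin's polynomial tail can be taken uniformly in $G$. This is precisely what the affine normalization accomplishes: all $G$-dependence is concentrated in the prefactor $(\sigma_G\lambda_G)^{p-\epsilon}$, while the underlying concentration estimate is only ever applied to measures $\tilde\mu_G$ sharing the uniform bound $M_p(\tilde\mu_G)\leq 1$, so its constant depends only on $d$, $p$, and $\epsilon$. A minor bookkeeping point is that for small $x$ or $N_G$ the right-hand side of \eqref{eq.conc_ui} can exceed $1$; this is harmless because the probability is trivially at most $1$ and the inequality remains valid after enlarging $C$ if necessary.
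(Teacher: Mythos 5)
Your proof is correct and takes essentially the same approach as the paper: affine normalization via Lemma~\ref{lem:shift_amp}, the scaling identity for $\WD$, an application of Theorem~\ref{thm:concentration_wass_rate}(i) to the normalized measure $\tilde\mu_G$ (whose constant depends only on $d,p,\epsilon$ and the uniform bound $M_p(\tilde\mu_G)\leq 1$), and then unwinding the prefactor $\sigma_G\lambda_G$ using the bounds on $\sigma_G$ and $\lambda_G$ from Lemma~\ref{lem:shift_amp}. The observation that the $G$-uniformity of the constant is precisely what the normalization buys is the right justification and matches the paper's explicit parenthetical in Theorem~\ref{thm:concentration_wass_rate}(i).
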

\begin{proof}
First notice that, by shifting and scaling of $\WD$, for all $t=1,\dots,T-1$, $G\in\Phi^N_t$, $f_G \in \R^{d}$, $\sigma_G, \lambda_G > 0$ and $N_G \in \N$,
    \begin{equation*}
        \WD(\mu_G,(\mu_G)^{N_G}) = \sigma_G\lambda_G\WD\bigg(\frac{1}{\lambda_G}\frac{\mu_G - f_G}{\sigma_G}, \Big(\frac{1}{\lambda_G}\frac{\mu_G - f_G}{\sigma_G}\Big)^{N_G}\bigg) .
    \end{equation*}
    Next, by Lemma~\ref{lem:shift_amp}, we know that $M_p^{1/p}\left(\frac{1}{\lambda_G}\frac{\mu_G - f_G}{\sigma_G}\right) \leq 1$. By Theorem~\ref{thm:concentration_wass_rate}-(i) below, we know that there exists $C>0$ such that, for all $x >0$ and $N_G\in\N$, 
    \begin{equation*}
    \begin{split}
    \P\Big[\WD(\mu_G,(\mu_G)^{N_G}) \geq  x\Big] &= \P\Big[\WD\bigg(\frac{1}{\lambda_G}\frac{\mu_G - f_G}{\sigma_G}, \Big(\frac{1}{\lambda_G}\frac{\mu_G - f_G}{\sigma_G}\Big)^{N_G}\bigg)\geq \frac{x}{\lambda_G\sigma_G}\Big] \leq C N_G^{1-p+\epsilon}\left(\frac{x}{\lambda_G\sigma_G}\right)^{-p + \epsilon}.
    \end{split}
    \end{equation*}
    Since $\sigma_G \leq c\Vert G \Vert^r +1$ and $\lambda_G = L_f \cdot \mathrm{diam}(G) + \big(\sup_{x_{1:t}\in \R^{dt}}M_{p}(\lambda_{x_{1:t}})\big)^{1/p}$, there exists $C>0$ s.t., for all $x >0$ and $N_G\in\N$, \eqref{eq.conc_ui} holds.
\end{proof}
Next, we prove a result to deal with deviation of sum of independent random variables. 
\begin{lemma}\label{lem:rosenthal_helper}
    Let $\beta_{+} > \beta \geq 2$, $\alpha_{i} \geq 0$ and let \add{$(X_i)_{i=1}^{\infty}$} be a sequence of independent nonnegative random variables such that for all $i \in \N$ there exists $\sigma_i > 0$ s.t., $\forall x > 0$, 
    \begin{equation*}
        \P[\vert X_i \vert \geq x] \leq \sigma_i x^{-\beta_{+}}.
    \end{equation*}
    Then there exists $C_{\beta,\beta_{+}} > 0$ s.t.
    \begin{equation*}
        \P\left[\vert S \vert \geq x\right] \leq \add{C_{\beta,\beta_{+}}\left( \sum_{i=1}^{\infty}\alpha_i^\beta \sigma_i^{(1+\beta_{+}-\beta)^{-1}} +   (\sum_{i=1}^{\infty}\E[\alpha_i X_i])^\beta\right)}x^{-\beta},
    \end{equation*}
    where \add{$S = \sum_{i=1}^{\infty}\alpha_i X_i$.}
\end{lemma}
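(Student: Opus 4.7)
The plan is to apply Markov's inequality at order $\beta$, and then a Rosenthal-type moment inequality for sums of independent nonnegative random variables. Since each $X_i\ge 0$ and $\alpha_i\ge 0$, the random variable $S=\sum_i \alpha_i X_i$ is itself nonnegative, so Markov yields $\P[|S|\ge x]\le x^{-\beta}\E[S^\beta]$. I would then invoke the uncentered Rosenthal-type inequality for sums of independent nonnegative random variables (see e.g.~the Johnson--Schechtman--Zinn or Pinelis versions): for $\beta\ge 1$ there exists a constant $K_\beta>0$ such that
\begin{equation*}
\E[S^\beta] \le K_\beta\left(\sum_i \alpha_i^\beta\,\E[X_i^\beta] + \Bigl(\sum_i \alpha_i\,\E[X_i]\Bigr)^\beta\right).
\end{equation*}
The second summand already matches the contribution $(\sum_i \E[\alpha_i X_i])^\beta$ in the claim.

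It remains to bound $\E[X_i^\beta]$. I would use the layer-cake representation together with the tail hypothesis $\P[X_i\ge u]\le \min(1,\sigma_i u^{-\beta_+})$: splitting the integral at the crossover level $u=\sigma_i^{1/\beta_+}$ yields $\E[X_i^\beta]\le C_{\beta,\beta_+}\,\sigma_i^{\beta/\beta_+}$. The final step is an exponent comparison: since $\beta\le \beta_+$, an elementary algebraic manipulation shows $\beta/\beta_+ \ge 1/(1+\beta_+-\beta)$, so in the regime $\sigma_i\le 1$ (which is the setting in the application of this lemma in the proof of Theorem~\ref{thm:first_concentration}, where $\sigma_i$ arises from Lemma~\ref{lem:deviation_shift_amp} evaluated at probability-like quantities) one has $\sigma_i^{\beta/\beta_+}\le \sigma_i^{1/(1+\beta_+-\beta)}$. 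Combining everything delivers the claim.

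The main obstacle is selecting the correct form of Rosenthal's inequality: if one applies the familiar centered Rosenthal inequality naively to $S-\E[S]$, an additional term $(\sum_i \alpha_i^2\,\E[X_i^2])^{\beta/2}$ appears that is not obviously absorbed into either summand of the statement. The uncentered variant for sums of independent nonnegative random variables bypasses this difficulty by incorporating the variance-type term directly into $(\sum_i \E[\alpha_i X_i])^\beta$; a secondary but useful point is that the somewhat loose exponent $1/(1+\beta_+-\beta)$ appearing in the claim is in fact dictated by convenience rather than tightness, and the tighter exponent $\beta/\beta_+$ would equally be available under the $\sigma_i\le 1$ regime.
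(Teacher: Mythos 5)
Your proof takes essentially the same route as the paper's: a layer-cake estimate of $\E[X_i^\beta]$, the max-form (``uncentered'') Rosenthal inequality for sums of independent nonnegative random variables (the paper invokes this as Rosenthal inequality II, citing \cite{ibragimov2001best}), and Markov's inequality, with the order of the last two steps swapped, which is immaterial since $S\ge 0$. Your remark about centered versus uncentered Rosenthal is precisely the point the paper relies on: the centered form would introduce a variance-type term that the statement does not accommodate, and the paper avoids it in the same way you do.

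One observation on the crossover level. You split the layer-cake integral optimally at $u=\sigma_i^{1/\beta_+}$, obtaining $\E[X_i^\beta]\le C\sigma_i^{\beta/\beta_+}$, and then pass to the stated exponent $\sigma_i^{1/(1+\beta_+-\beta)}$ under the restriction $\sigma_i\le 1$, which you correctly flag and justify via the eventual application. The paper instead chooses $\varepsilon=\sigma_i^{1/(1+\beta_+-\beta)}$ so the two resulting terms balance at the target exponent directly, but its display bounds $\int_0^\varepsilon\beta t^{\beta-1}\P[|X_i|\ge t]\,dt$ by $\varepsilon$ rather than $\varepsilon^\beta$, which is only valid when $\varepsilon\le 1$, i.e.\ when $\sigma_i\le 1$. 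So the paper's computation carries the same hidden restriction you make explicit, and in that regime your argument and the paper's coincide; your version is in fact the more transparent one.
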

\begin{proof}
First notice that, for all $i \in \N$,
\begin{align*}
    \E[\vert X_i\vert^\beta] = \int_{0}^{\infty}\beta t^{\beta-1}\P[\vert X_i \vert \geq t] \leq \varepsilon + \sigma_i\int_{\varepsilon}^{\infty}\beta t^{\beta-1 - \beta_{+}}dt = \varepsilon + \sigma_i\frac{\beta}{\beta_{+} - \beta}\varepsilon^{\beta - \beta_{+}}.
\end{align*}
By choosing $\varepsilon = \sigma_i^{r}, r = (1+\beta_{+}-\beta)^{-1}$, we have
\begin{equation*}
     \E[\vert X_i\vert^\beta] \leq \sigma_i^{r} + \frac{\beta}{\beta_{+}-\beta}\sigma_i^{1-(\beta_{+}-\beta)r} = \frac{\beta_{+}}{\beta_{+}-\beta}\sigma_i^{(1+\beta_{+}-\beta)^{-1}}.
\end{equation*}
Let $Y_i = \alpha_i X_i$ for all $i \in \N$, then
\begin{equation}\label{eq:lem:rosenthal_helper:1}
    \E[Y_i^\beta] \leq C_{\beta,\beta_{+}} \alpha_i^\beta \sigma_i^{(1+\beta_{+}-\beta)^{-1}},
\end{equation}
for some $C_{\beta,\beta_{+}}>0$.
Thus, by Rosenthal inequality II (see \cite{ibragimov2001best}), we have, for all $N \in \N$, 
\begin{align}\label{eq:lem:rosenthal_helper:2}
    \E\Big[\left( \sum_{i=1}^{N}Y_i\right)^{\beta}\Big]^{\frac{1}{\beta}} &\leq C \max\bigg\{ \big(\sum_{i=1}^{N}\E[ Y_i]\big), \big(\sum_{i=1}^{N}\E[Y_i^{\beta}]\big)^{\frac{1}{\beta}}\bigg\}.
\end{align}
On the other hand, \eqref{eq:lem:rosenthal_helper:1} yields
\begin{align}\label{eq7}
    \big(\sum_{i=1}^{N}\E[Y_i^{\beta}]\big)^{\frac{1}{\beta}} \leq C_{\beta,\beta_{+}} \big(\sum_{i=1}^{N}\alpha_i^\beta \add{\sigma_i^{(1+\beta_{+}-\beta)^{-1}}}\big)^{\frac{1}{\beta}}.
\end{align}
Now, combining \eqref{eq:lem:rosenthal_helper:2} and \eqref{eq7}, we have that 
\begin{align*}
    \E\Big[\left(\sum_{i=1}^{N}Y_i\right)^{\beta}\Big] &\leq \add{C\max\Big\{C_{\beta,\beta_{+}} \big(\sum_{i=1}^{N}\alpha_i^\beta \sigma_i^{(1+\beta_{+}-\beta)^{-1}}\big),  \Big(\sum_{i=1}^{N}\E[Y_i]\Big)^\beta\Big\}.}
\end{align*}
Then, by Markov inequality we conclude that, for all $x>0$,
\begin{equation*}
    \P\Big[ \sum_{i=1}^{N}\alpha_i X_i  \geq x\Big] \leq \E\Big[\left( \sum_{i=1}^{N}Y_i\right)^{\beta}\Big] x^{-\beta}\leq \add{C_{\beta,\beta_{+}}\left( \sum_{i=1}^{N}\alpha_i^\beta \sigma_i^{(1+\beta_{+}-\beta)^{-1}} +   \Big(\sum_{i=1}^{N}\E[\alpha_i X_i]\Big)^\beta\right)x^{-\beta}.}
\end{equation*}
By taking $N\to \infty$ we complete the proof. 
\end{proof}
We first recall the concentration rates for the classical Wasserstein distance, and then we prove our concentration rates for the adapted Wasserstein distance.
\begin{theorem}[Theorem 2 in \cite{fournier2015rate}]
    \label{thm:concentration_wass_rate}
    Let $\mu \in \mathcal{P}(\R^{d})$. 
    \begin{enumerate}[(i)]
        \item If $\exists q > 2$ s.t. $M_{q}(\mu) < \infty$, then, for all $\epsilon \in (0,q)$, there exists $C >0$ \add{(depending only on $q,\epsilon,M_q(\mu)$)} s.t., for all $N \in \N$ and $x > 0$,
        \begin{equation*}
            \P\big[\WD(\mu,\mu^{N}) \geq x\big] \leq CN(Nx)^{-q + \epsilon}.
        \end{equation*}
        \item If $\exists \alpha \geq 2, \gamma > 0$ s.t. $\mathcal{E}_{\alpha,\gamma}(\mu) < \infty$, then there exist $c, C >0$ s.t., for all $N \in \N$ and $x > 0$,
        \begin{align*}
            \P\big[\WD(\mu,\mu^{N}) \geq x\big] &\leq Ce^{-cNx^{d}}\mathbbm{1}_{\{x \leq 1\}} + Ce^{-cNx^{\alpha}}\mathbbm{1}_{\{x > 1\}}.
        \end{align*}    
    \end{enumerate}
\end{theorem}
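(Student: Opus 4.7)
The plan is to prove both parts by combining Kantorovich duality with a multi-scale dyadic decomposition of $\R^d$ and Bernstein-type concentration for the binomial cell masses $N\mu^N(Q)$; both bounds share the same skeleton, and only differ in how the mass of $\mu$ and $\mu^N$ outside a large ball is controlled. The first step is to localize: fix a radius $R = R(N,x)$ to be optimized and decompose
\[
\WD(\mu,\mu^N) \leq \WD(\mu\vert_{B_R},\mu^N\vert_{B_R}) + T_\mu(R) + T_{\mu^N}(R),\qquad T_\nu(R) := \int_{\Vert y\Vert > R} \Vert y\Vert\, \nu(dy).
\]
The deterministic tail $T_\mu(R) \lesssim R^{1-q}M_q(\mu)$ is absorbed into $x/3$ by choosing $R$ suitably. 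The random tail $T_{\mu^N}(R)$ is an empirical mean of i.i.d.\ nonnegative variables, controlled by Markov on $\Vert X\Vert^q$ in part~(i) and by Cram\'er--Chernoff via $\mathcal{E}_{\alpha,\gamma}(\mu)$ in part~(ii).

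On $B_R$ I invoke Kantorovich duality $\WD(\mu\vert_{B_R},\mu^N\vert_{B_R}) = \sup_{\Vert f\Vert_{\mathrm{Lip}}\leq 1}\int_{B_R} f\,d(\mu-\mu^N)$, and telescope any $1$-Lipschitz $f$ along its cell-averages on nested dyadic cubes $\{\mathcal{Q}_n\}_{n\geq 0}$ of side $2^{-n}$. The Lipschitz property gives oscillation $\lesssim 2^{-n}$ at scale $n$, so
\[
\WD(\mu\vert_{B_R},\mu^N\vert_{B_R}) \lesssim 2^{-n_{\max}} + \sum_{n=0}^{n_{\max}-1} 2^{-n}\sum_{Q \in \mathcal{Q}_n,\, Q\cap B_R\neq\emptyset}\vert \mu(Q)-\mu^N(Q)\vert.
\]
For each cube $Q$, $N\mu^N(Q)\sim\mathrm{Binom}(N,\mu(Q))$, so Bernstein yields $\P[\vert \mu^N(Q)-\mu(Q)\vert > t] \leq 2\exp\bigl(-\tfrac{Nt^2}{2\mu(Q)+2t/3}\bigr)$. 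Allocating scale-dependent tolerances $t_n$ with $\sum_n 2^{-n}\vert\mathcal{Q}_n\cap B_R\vert\cdot t_n \lesssim x$ and union-bounding over the $\lesssim (R\cdot 2^{n_{\max}})^d$ cubes across all scales, bounds the exceedance probability in terms of $R$, $n_{\max}$, and $x$.

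Balancing the free parameters yields the two stated rates. Choose $n_{\max}$ with $2^{-n_{\max}} \asymp x$. For part~(i), take $R\asymp Nx$ so that the Markov tail bound on $T_{\mu^N}(R)$ matches the desired polynomial form, producing $CN(Nx)^{-q+\epsilon}$ (the $\epsilon$ absorbs the polynomially-many dyadic cells and any log-slack). For part~(ii), the exponential moment produces $\exp(-cNx^\alpha)$ for $x>1$ (heavy-tail exit dominating), whereas for $x\leq 1$ the Bernstein bound summed over $\lesssim x^{-d}$ cubes gives $\exp(-cNx^d)$ (the dimension-dependent approximation term dominating).

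The main obstacles will be: calibrating $R$, $n_{\max}$ and the $t_n$ so that the final exponent exactly matches $CN(Nx)^{-q+\epsilon}$ in~(i), which amounts in effect to a Markov-type bound on $\E[\WD^{q-\epsilon}]$ driven by the expected-rate estimate; absorbing the borderline $d=2$ logarithmic factor into the $\epsilon$-slack; and, in~(ii), matching cleanly the transition at $x=1$ between the small-$x$ subgaussian-in-$Nx^d$ regime and the large-$x$ stretched-exponential-in-$Nx^\alpha$ regime, which require different allocations of the per-scale tolerances and of the tail radius $R$.
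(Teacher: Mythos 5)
The paper does not prove this result at all: it is imported as a black box from \cite{fournier2015rate} (Theorem 2 there), so the only proof to compare yours against is the one in that reference --- and your skeleton (weighted dyadic decomposition into cell-mass discrepancies $|\mu(Q)-\mu^N(Q)|$, binomial concentration, separate control of the tail mass) is indeed the strategy used there. The genuine gap is in your concentration step. Per-cube Bernstein plus a union bound over the $\asymp(R2^{n})^d$ cubes at scale $n$ cannot deliver the stated rates: your budget constraint $\sum_n 2^{-n}\lvert\mathcal{Q}_n\rvert\,t_n\lesssim x$ forces $t_{n_{\max}}\lesssim x^{d}$ at the finest scale, so any cube with $\mu(Q)\gg x^{d}$ only yields $\exp(-cNx^{2d}/\mu(Q))$ from the Gaussian branch of Bernstein, not $\exp(-cNx^{d})$; and even with the optimized allocation $t_{n,Q}\propto\sqrt{\mu(Q)}$ the union bound leaves a prefactor of order $x^{-d}$ that cannot be absorbed into $Ce^{-cNx^{d}}$ in the window $1\lesssim Nx^{d}\lesssim\log(1/x)$. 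The route that works --- and is what \cite{fournier2015rate} and \cite{dedecker2015deviation} (cf.\ Lemma~\ref{lem:wass_mean_concentration}) actually use --- is to apply a single bounded-differences (McDiarmid/Chernoff) inequality to the \emph{whole} multiscale functional $\sum_n 2^{-n}\sum_Q|\mu^N(Q)-\mu(Q)|$, which moves by $O(1/N)$ when one sample changes, and to control its mean by the moment estimate of Theorem~\ref{thm:wass_rate}; the exponent $Nx^{d}$ then comes from the trivial regime $x\lesssim \E[\WD(\mu,\mu^N)]\asymp N^{-1/d}$, not from any per-cube computation.

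Separately, the obstacle you flag in part (i) --- calibrating parameters so the bounded-part contribution ``exactly matches $CN(Nx)^{-q+\epsilon}$'' --- is not one you can overcome, because the statement as transcribed here drops a term that is present in \cite{fournier2015rate}: their bound is $a(N,x)\mathbbm{1}_{\{x\leq1\}}+b(N,x)$ with $a(N,x)=Ce^{-cNx^{d}}$ (for $d\geq3$) and $b(N,x)=CN(Nx)^{-q+\epsilon}$, and $a$ is not dominated by $b$. Indeed, for $\mu$ uniform on a cube and $x=\delta N^{-1/d}$ with $\delta$ small, $\P[\WD(\mu,\mu^N)\geq x]\to1$ while $N(Nx)^{-q+\epsilon}=\delta^{-(q-\epsilon)}N^{1-(q-\epsilon)(d-1)/d}\to0$ whenever $q-\epsilon>d/(d-1)$; so aim for the two-term bound rather than the purely polynomial one. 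Two smaller points to repair in a full write-up: restricting both measures to $B_R$ creates a mass mismatch $|\mu(B_R^c)-\mu^N(B_R^c)|$ that must be transported over distance $\lesssim R$ (your telescoping also needs the factor $R$, or a rescaling, since the relevant cubes have side $R2^{-n}$), and the absorption of $T_\mu(R)\lesssim (Nx)^{1-q}M_q(\mu)$ into $x/3$ fails for $x\lesssim N^{-(q-1)/q}$, where you must instead note that the target bound exceeds $1$.
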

\begin{proof}[Proof of Theorem \ref{thm:first_concentration}]
    We are going to estimate the concentration for each term in \eqref{eq:lem:helper:0} and \eqref{eq:lem:helper:1}.\\   
    \textbf{Step 1:} In this step, all statements hold for both uniform and non-uniform adapted empirical measures.
    First, we estimate the concentration rate of $\WD(\mu_1,\mu^{N}_1)$. Since by assumption $p > 2$, we can apply Theorem~\ref{thm:concentration_wass_rate}-(i) to $\mu_1$. Then, for all $\epsilon \in (0,p)$, there exists $C >0$ such that, for all $N \in \N$ and $x > 0$,
    \begin{equation}\label{eq:thm:concentration1:1}
        \P\Big[\WD(\mu_1,\mu_1^{N}) \geq  x\Big] \leq CN^{-p+1+\epsilon}x^{-p+\epsilon}.
    \end{equation}
    Next, we estimate the concentration rate of $\frac{\Delta_N}{N}\sum_{n=1}^{N}\Vert X^{(n)} \Vert$. Notice that 
    \begin{equation*}
        \begin{split}
            \Delta_N\frac{1}{N}\sum_{n=1}^{N}\Vert X^{(n)} \Vert &= \E[\Vert X^{(1)} \Vert]\Delta_N + \Delta_N\frac{1}{N}\sum_{n=1}^{N}\big(\Vert X^{(n)} \Vert - \E[\Vert X^{(n)} \Vert]\big)\\
            &\leq C\Delta_N + \Delta_N\frac{1}{N}\sum_{n=1}^{N}\big(\Vert X^{(n)} \Vert - \E[\Vert X^{(n)} \Vert]\big).
        \end{split}
    \end{equation*}
    For notational simplicity, we let $Z_{n} = \big(\Vert X^{(n)} \Vert - \E[\Vert X^{(n)} \Vert]\big)$ for all $n=1,\dots,N$. Then we have 
    \begin{equation}\label{eq:thm:concentration1:2}
        \begin{split}
            \P\Big[\Delta_N\frac{1}{N}\Big\vert\sum_{i=1}^{N}Z_i\Big\vert\geq x\Big] &= \P\Big[\Big\vert\sum_{i=1}^{N}Z_i\Big\vert \geq \frac{Nx}{\Delta_N}\Big] \leq \frac{\E\Big[\Big\vert\sum_{i=1}^{N}Z_i\Big\vert^p\Big]\Delta_N^p}{(Nx)^p}\qquad \text{(By Markov's inequality)}\\
            &\leq C\frac{\Big(N\E[\vert Z_1\vert^{p}] + N^{\frac{p}{2}}(\E[\vert Z_1 \vert^2])^{\frac{p}{2}}\Big)\Delta_N^p}{(Nx)^p}\qquad \text{(By Rosenthal inequality I, see \cite{ibragimov1998exact})}\\
            &\leq \frac{NC\Delta_N^p}{(Nx)^p} + \frac{N^{\frac{p}{2}}C\Delta_N^p}{(Nx)^p} \leq C N^{-\frac{p}{2}} x^{-p} \Delta_N^p.       
        \end{split}
    \end{equation}
    We are therefore left to estimate 
    \begin{equation*}
        Y_t \coloneqq \sum_{G\in \Phi_{t}^{N}}\mu^{N}(G)\WD(\mu_{G},\mu_{G}^{N}),
    \end{equation*}
    for all $t=1,\dots,T-1$. Combining Lemma~\ref{lem:helper2} and Lemma~\ref{lem:deviation_shift_amp}, we have that, for all $\epsilon \in (0,p)$, there exists $C > 0$ s.t., for all $N\in\N$, $x >0$ and $G \in \Phi_t^N$, 
    \begin{equation}\label{eq:thm:concentration1:3}
    \begin{split}
        \P\Big[ \WD(\mu_G,\mu^{N}_{G}) \geq  x \Big\vert \mu^{N}(G)\Big] &\leq  C(\Vert G\Vert^r+1)^{p-\epsilon}(\mathrm{diam}(G)+1)^{p-\epsilon}(N\mu^N(G))^{1-p+\epsilon}x^{-p + \epsilon}.
    \end{split}
    \end{equation}
    Let $\alpha_G = \mu^{N}(G) \geq 0$, $X_{G} = \WD(\mu_G,\mu_G^{N})\geq 0 $, $\sigma_G = CN_G^{-p+1+\epsilon}k_G^{p-\epsilon}$, $N_G = N\mu^N(G)$ and $k_G = (\Vert G\Vert^r+1)(\mathrm{diam}(G)+1)$, for all $G \in \Phi_t^N$.
    Then we can rewrite \eqref{eq:thm:concentration1:3} as
        \begin{align*}
            \P\big[ X_G \geq x \vert \mu^{N}(G)\big] \leq \sigma_G x^{-p+\epsilon}.
        \end{align*}
    Let $\delta \in (0, p-2-\epsilon)$, $\beta=p-\epsilon-\delta$ and $\beta_{+}= p-\epsilon$. Then we have $\beta_{+} > \beta > 2$, and we can apply Lemma~\ref{lem:rosenthal_helper}. Thus, there exists $C_{\beta_{+}, \beta} > 0$ s.t., for all $x > 0$ and $N \in \N$,
    \begin{equation}
    \label{eq:thm:concentration1:4}
        \P\Big[\sum_{G\in \Phi_{t}^{N}}\alpha_G X_{G} \geq x \Big\vert \mu^{N}(G)\Big] \leq C_{\beta_{+}, \beta} \sum_{G\in \Phi_{t}^{N}}\alpha_G^{\beta
        } \sigma_G^{(1+\beta_{+}-\beta)^{-1}}x^{-\beta} + C_{\beta_{+}, \beta}\Big(\sum_{G\in \Phi_{t}^{N}}\alpha_G \E[X_G|\alpha_G]\Big)^{\beta}x^{-\beta}.
    \end{equation}
    Now, we estimate the first term of the r.h.s. in \eqref{eq:thm:concentration1:4}. By plugging in the definition of $\sigma_G$, we have \begin{equation}\label{eq:thm:concentration1:5}
        \sum_{G\in \Phi_{t}^{N}}\alpha_G^{\beta
        } \sigma_G^{(1+\beta_{+}-\beta)^{-1}} = CN^{\frac{-p+1+\epsilon}{1 + \delta}}\sum_{G\in \Phi_{t}^{N}}\alpha_G^{\beta+\frac{-p+1+\epsilon}{1 + \delta}}k_G^{\frac{p-\epsilon}{1+\delta}}.
    \end{equation}
     Since $1-\delta < \frac{1}{1+\delta} <1$, we have that $\beta + \frac{-p+1+\epsilon}{1 + \delta}  > 1- (2+\epsilon-p)\delta > 1-(2+\epsilon)\delta$. Then, for $\delta \in (0,\frac{1}{2+\epsilon})$, by taking expectation of \eqref{eq:thm:concentration1:5} and using Jensen's inequality, we obtain that 
    \begin{equation}\label{eq:thm:concentration1:6}
    \begin{split}
        &\quad~\E\Big[\sum_{G\in \Phi_{t}^{N}}\alpha_G^{\beta
        } \sigma_G^{(1+\beta_{+}-\beta)^{-1}}\Big] = N^{\frac{-p+1+\epsilon}{1 + \delta}}\sum_{G\in \Phi_{t}^{N}}k_G^{\frac{p-\epsilon}{1+\delta}}\E\big[\alpha_G^{\beta+\frac{-p+1+\epsilon}{1 + \delta}}\big] \leq N^{\frac{-p+1+\epsilon}{1 + \delta}}\sum_{G\in \Phi_{t}^{N}}k_G^{\frac{p-\epsilon}{1 + \delta}}\E[\alpha_G^{1-(2+\epsilon)\delta}]\\
        &\leq N^{\frac{-p+1+\epsilon}{1 + \delta}}\sum_{G\in \Phi_{t}^{N}}k_G^{\frac{p-\epsilon}{1 + \delta}}\E[\alpha_G]^{1-(2+\epsilon)\delta} = N^{\frac{-p+1+\epsilon}{1 + \delta}}\sum_{G\in \Phi_{t}^{N}}k_G^{\frac{p-\epsilon}{1 + \delta}}\mu(G)^{1-(2+\epsilon)\delta}\quad (\alpha_G \leq 1).
    \end{split}
    \end{equation}
    Next, we estimate the second term of the r.h.s. in \eqref{eq:thm:concentration1:4}. By \eqref{eq:lem:mean_shift_amp:3} in Lemma~\ref{lem:mean_shift_amp} ($p>2 > \frac{d}{d-1}$), there exists $C>0$ s.t. 
    \begin{equation}
    \label{eq:thm:concentration1:7}
    \sum_{G\in \Phi_{t}^{N}}\alpha_G \E[X_G|\alpha_G]\leq C\sum_{G\in \Phi^{N}_t}\mu^{N}(G) (\Vert G\Vert^r +1)\left(\mathrm{diam}(G) + 1\right)(N\mu^N(G))^{-\frac{1}{d}} = CN^{-\frac{1}{d}}\sum_{G\in \Phi^{N}_t}\alpha_G^{1-\frac{1}{d}}k_G.
    \end{equation}
    By Lemma~\ref{lem:common_estimate} (with $\mu = \mu^N$, $a = 1/d$ and $b = c = 1$),
    \begin{equation}
    \label{eq:thm:concentration1:8}
    \left\{\begin{aligned}
        \sum_{G\in \hat{\Phi}^{N}_t}\alpha_G^{1-\frac{1}{d}}k_G &=  \sum_{G\in \hat{\Phi}^{N}_t}(\mu^N(G))^{1-\frac{1}{d}}(\Vert G\Vert^r+1)(\mathrm{diam}(G)+1) \leq CN^{\frac{t}{dT}}\sum_{j\geq 0}2^{j(r + t)}\mu^N(\calA_j)^{1-\frac{1}{d}}\\
        \sum_{G\in \check{\Phi}^{N}_t}\alpha_G^{1-\frac{1}{d}}k_G &= \sum_{G\in \check{\Phi}^{N}_t}(\mu^N(G))^{1-\frac{1}{d}}(\Vert G\Vert^r+1)(\mathrm{diam}(G)+1) \leq  C N^{\frac{t}{dT}}\sum_{j\geq 0}2^{j(r + 1)}\mu^N(\calA_j)^{1-\frac{1}{d}}.
    \end{aligned}\right.
    \end{equation}
    For simplicity, we use the notation $v(\hat{\Phi}^N_t,j)= 2^{j(r+t)}$ and $v(\check{\Phi}^N_t,j)= 2^{j(r+1)}$, and rewrite \eqref{eq:thm:concentration1:8} as 
    \begin{equation}
    \label{eq:thm:concentration1:9}
        \sum_{G\in \check{\Phi}^{N}_t}\alpha_G^{1-\frac{1}{d}}k_G \leq C N^{\frac{t}{dT}}\sum_{j\geq 0}v(\Phi^N_t,j)\mu^N(\calA_j)^{1-\frac{1}{d}}.
    \end{equation}
    Combining \eqref{eq:thm:concentration1:7} and \eqref{eq:thm:concentration1:9}, we have, for all $t=1,\dots,T-1$,
    \begin{equation}
    \label{eq:thm:concentration1:10}
    \begin{split}
        \E\Big[\Big(\sum_{G\in \Phi_{t}^{N}}\alpha_G \E[X_G|\alpha_G]\Big)^{\beta}\Big] \leq CN^{-\frac{\beta}{d}} \E\Big[\big(N^{\frac{t}{dT}}\sum_{j\geq 0}v(\Phi^N_t,j)\mu^N(\calA_j)^{1-\frac{1}{d}}\big)^{\beta}\Big]\leq CN^{-\frac{\beta}{dT}}\E\Big[\Big(\sum_{j\geq 0}v(\Phi^N_t,j)\mu^N(\calA_j)^{1-\frac{1}{d}}\Big)^{\beta}\Big].
    \end{split}
    \end{equation}
    By Minkowski's inequality,
    \begin{equation} 
    \label{eq:thm:concentration1:11}
    \begin{split}
        \E\Big[\Big(\sum_{j\geq 0}v(\Phi^N_t,j)\mu^N(\calA_j)^{1-\frac{1}{d}}\Big)^{\beta}\Big]^{\frac{1}{\beta}} &\leq \sum_{j\geq 0}\E\Big[\Big(v(\Phi^N_t,j)\mu^N(\calA_j)^{1-\frac{1}{d}}\Big)^{\beta}\Big]^{\frac{1}{\beta}}
        = \sum_{j\geq 0}\E\Big[v(\Phi^N_t,j)^{\beta}\mu^N(\calA_j)^{\beta(1-\frac{1}{d})}\Big]^{\frac{1}{\beta}}\\
        &\leq \sum_{j\geq 0}\E\Big[v(\Phi^N_t,j)^{\beta}\mu^N(\calA_j)\Big]^{\frac{1}{\beta}} = \sum_{j\geq 0}v(\Phi^N_t,j)\mu(\calA_j)^{\frac{1}{\beta}}, \quad (\beta > 2, \mu(\calA_j)\leq 1).
    \end{split}
    \end{equation}
    Combining the expectation of \eqref{eq:thm:concentration1:4}, \eqref{eq:thm:concentration1:6}, 
    \eqref{eq:thm:concentration1:10} and
    \eqref{eq:thm:concentration1:11}, we have 
    \begin{equation}
        \label{eq:thm:concentration1:12}
        \P\Big[\sum_{G\in \Phi_{t}^{N}}\alpha_G X_G \geq x \Big] \leq CN^{\frac{-p+1+\epsilon}{1 + \delta}}x^{-\beta}\sum_{G\in \Phi^{N}_t}k_G^{\frac{p-\epsilon}{1 + \delta}}\mu(G)^{1-(2+\epsilon)\delta} + CN^{-\frac{\beta}{dT}}x^{-\beta}\Big(\sum_{j\geq 0}v(\Phi^N_t,j)\mu(\calA_j)^{\frac{1}{\beta}}\Big)^{\beta}.
    \end{equation}
     \textbf{Step 2:} In this step, we prove that the infinite sums in both terms of the r.h.s. in \eqref{eq:thm:concentration1:12} are finite, by separately considering the uniform adapted empirical measure in Setting \ref{setting:1} and  the non-uniform adapted empirical measure in Setting \ref{setting:2}. \\ (i) Assume Setting \ref{setting:1}. Notice that $\mu(\mathcal{A}^{t}_{j})\leq \frac{M_{q}(\mu_{1:t})}{2^{q(j-1)}}$. For the first term of the r.h.s. in \eqref{eq:thm:concentration1:12}, by Lemma~\ref{lem:common_estimate} (with $a = (1+\epsilon)\delta$ and $b = c = \frac{p-\epsilon}{1+\delta}$), we have
    \begin{equation}
    \label{eq:thm:concentration1:13.1}
            \sum_{G\in \hat{\Phi}_{t}^{N}}k_G^{\frac{p-\epsilon}{1 + \delta}}\mu(G)^{1-(2+\epsilon)\delta} \leq CN^{\frac{t(2+\epsilon)\delta}{T}}\sum_{j\geq 0} 2^{j\big(r\frac{p-\epsilon}{1+\delta} + dt(2+\epsilon)\delta - q(1-(2+\epsilon)\delta)\big)} .
    \end{equation}
    For the second term of the r.h.s. in \eqref{eq:thm:concentration1:12}, we have
    \begin{equation}
    \label{eq:thm:concentration1:13.2}
            \sum_{j\geq 0}v(\hat{\Phi}^N_t,j)\mu(\calA_j)^{\frac{1}{\beta}} \leq C\sum_{j\geq 0} 2^{j(r+t - q/\beta)}.
    \end{equation}
    Since $q \geq rp + d(T-1)(p-1)$, there exists $\delta$ small enough such that the series in \eqref{eq:thm:concentration1:13.1} and  \eqref{eq:thm:concentration1:13.2} converge.\\
    (ii) Assume Setting \ref{setting:2}. For the first term of the r.h.s. in \eqref{eq:thm:concentration1:12}, by Lemma~\ref{lem:common_estimate} (with $a = (1+\epsilon)\delta$ and $b = c = \frac{p-\epsilon}{1+\delta}$), we have
    \begin{equation}
    \label{eq:thm:concentration1:14.1}
        \sum_{G\in \check{\Phi}^{N}_{t}}k_G^{\frac{p-\epsilon}{1 + \delta}}\mu(G)^{1-(2+\epsilon)\delta} \leq CN^{\frac{t(2+\epsilon)\delta}{T}}\sum_{j\geq 0} 2^{j\big((r+1)\frac{p-\epsilon}{1+\delta} - q(1-(2+\epsilon)\delta)\big)}.
    \end{equation}
    For the second term of the r.h.s. in \eqref{eq:thm:concentration1:12}, we have
    \begin{equation}
    \label{eq:thm:concentration1:14.2}
        \sum_{j\geq 0}v(\check{\Phi}^N_t,j)\mu(\calA_j)^{\frac{1}{\beta}} \leq C\sum_{j\geq 0} 2^{j(r+1 - q/\beta)}.
    \end{equation}
    Since $q \geq (r+1)p$, there exists $\delta$ small enough such that the series in \eqref{eq:thm:concentration1:14.1} and \eqref{eq:thm:concentration1:14.2} converge.\\
    \textbf{Step 3:} We choose $\delta$ small enough such that the series in \eqref{eq:thm:concentration1:13.1}, \eqref{eq:thm:concentration1:13.2}, \eqref{eq:thm:concentration1:14.1} and \eqref{eq:thm:concentration1:14.2} converge and 
    \begin{equation}
    \label{eq:thm:concentration1:15}
        \frac{-p+1+\epsilon}{1 + \delta} + (2+\epsilon)\delta \leq \frac{-p+\epsilon+\delta}{dT}, \quad -\beta = -p+\epsilon+\delta \leq -p + 2\epsilon.
    \end{equation}
    Combining this with \eqref{eq:thm:concentration1:12},
    \eqref{eq:thm:concentration1:13.1}, \eqref{eq:thm:concentration1:13.2}, \eqref{eq:thm:concentration1:14.1} and \eqref{eq:thm:concentration1:14.2}, we obtain that there exists $C > 0$ such that
    \begin{equation}
    \label{eq:thm:concentration1:16}
    \begin{split}
        \P\Big[\sum_{G\in \Phi_{t}^{N}}\alpha_G X_{G} \geq x \Big] \leq CN^{\frac{-p+1+\epsilon}{1 + \delta}}x^{-\beta}N^{\frac{t(2+\epsilon)\delta}{T}} + CN^{-\frac{\beta}{dT}}x^{-\beta} \leq  CN^{-\frac{p-2\epsilon}{dT}}x^{-p+2\epsilon}.
    \end{split}
    \end{equation}
    \textbf{Step 4:} (i) Assume Setting \ref{setting:1} and combine \eqref{eq:lem:helper:0}, \eqref{eq:thm:concentration1:1} and \eqref{eq:thm:concentration1:16}. Then we conclude that there exists  $C > 0$ such that, for all $N \in \N$ and $x > 0$,
    \begin{align*}
        \P\Big[\AWD(\mu,\hat{\mu}^{N}) \geq x + \mathrm{rate}(N)\Big] \leq  \add{CN^{-\frac{p-2\epsilon}{dT}}x^{-p+2\epsilon}}.
    \end{align*}
    (ii) Assume Setting \ref{setting:2} and combine \eqref{eq:lem:helper:1}, \eqref{eq:thm:concentration1:1}, \eqref{eq:thm:concentration1:2} and \eqref{eq:thm:concentration1:16}. Then we conclude that there exists  $C > 0$ such that, for all $N \in \N$ and $x > 0$,
    \begin{align*}
        \P\Big[\AWD(\mu,\check{\mu}^{N}) \geq x + \mathrm{rate}(N)\Big] \leq  \add{CN^{-\frac{p-2\epsilon}{dT}}x^{-p+2\epsilon}} + C N^{-\frac{p}{2}} x^{-p} .
    \end{align*}
    By the arbitrarity of $\epsilon$, we can replace $2\epsilon$ by $\epsilon$ and complete the proof of Theorem~\ref{thm:first_concentration}.
\end{proof}

\section{Concentration inequality II}\label{sec:concentration2} This section is devoted to the proof of Theorem \ref{thm:second_concentration}. 
Let us first recall the properties of sub-Gaussian distributions.
\begin{definition}[Sub-Gaussian distribution]
    A real-valued centered random variable $X$ is called sub-Gaussian if there exist $C > 0$ and $\sigma > 0$ s.t. 
    \begin{equation*}
        \P[| X | \geq x] \leq Ce^{-\frac{x^2}{2\sigma^2}}\quad \text{for all $x > 0$}.
    \end{equation*}
    In this case, we write $X \sim \mathrm{subG}(C,\sigma^2)$.
\end{definition}
\begin{lemma}\label{lem:subgaussian2}
    Let $X_{1},\dots,X_{n}$ be independent with $X_i \sim \mathrm{subG}(C,\sigma_i^2)$, and let $\alpha_i \geq 0$ and $\tilde{\sigma}_i = \sigma_i \max\{C,1\}$, for all $ i = 1,\dots,n$. Then we have $\sum_{i=1}^{n}\alpha_i X_i \sim \mathrm{subG}(2,4\sum_{i=1}^{n}\alpha_i^2 \tilde{\sigma_i}^2)$, that is,
    \begin{equation*}
        \P\Big[\big\vert\sum_{i=1}^{n}\alpha_i X_i\big\vert \geq x\Big] \leq 2e^{-\frac{x^2}{8\tilde{\sigma}^2}}\quad \text{for all $x > 0$},
    \end{equation*}
    where $\tilde{\sigma}^2 = \sum_{i=1}^{n}\alpha_i^2 \tilde{\sigma_i}^2$.
\end{lemma}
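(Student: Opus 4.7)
The strategy is the textbook Chernoff/MGF argument for sums of independent sub-Gaussian random variables, with the twist that the tail prefactor $C$ in the hypothesis need not equal $2$, and so is first absorbed into the scale by inflating each $\sigma_i$ to $\tilde{\sigma}_i = \sigma_i\max\{C,1\}$.

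First I would show that for a centered random variable $Y$ with $\P[|Y|\geq x]\leq Ce^{-x^2/(2\sigma^2)}$ there exists an absolute constant $c>0$ such that $\E[e^{\lambda Y}]\leq \exp(c\lambda^2\tilde{\sigma}^2)$ for all $\lambda\in\R$, where $\tilde{\sigma}=\sigma\max\{C,1\}$. This follows via the layer-cake identity
\begin{equation*}
\E[e^{\lambda Y}]-1-\lambda\E[Y]=\lambda\int_0^\infty e^{\lambda u}\P[Y>u]\,du-\lambda\int_{-\infty}^0 e^{\lambda u}\P[Y<u]\,du,
\end{equation*}
substituting the tail estimate on each integral, completing the square inside the resulting Gaussian integrals, and using $\E[Y]=0$ to absorb the linear term.

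Next, independence of $(X_i)_{i=1}^n$ tensorizes the MGF of $S=\sum_{i=1}^n\alpha_i X_i$:
\begin{equation*}
\E[e^{\lambda S}]=\prod_{i=1}^n\E[e^{\lambda\alpha_i X_i}]\leq \exp\Bigl(c\lambda^2\sum_{i=1}^n\alpha_i^2\tilde{\sigma}_i^2\Bigr)=\exp(c\lambda^2\tilde{\sigma}^2).
\end{equation*}
A standard Chernoff bound $\P[S\geq x]\leq e^{-\lambda x}\E[e^{\lambda S}]$, optimized over $\lambda>0$, produces a one-sided tail of the form $\exp(-x^2/(4c\tilde{\sigma}^2))$; applying the same argument to $-S$ and summing the two bounds yields the prefactor $2$ in the claim. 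The constant $c$ from Step 1 is tuned so that $4c=8$, matching the target exponent $-x^2/(8\tilde{\sigma}^2)$.

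The main obstacle is Step 1: translating the tail bound into a clean exponential moment bound while tracking constants precisely. When $C>2$ the hypothesis is not a standard sub-Gaussian tail, and the inflated scale $\tilde{\sigma}_i=\sigma_i\max\{C,1\}$ is engineered so that $Ce^{-x^2/(2\sigma_i^2)}\leq 2e^{-x^2/(2\tilde{\sigma}_i^2)}$ holds on the range where the right-hand side is below $1$, the complementary range being trivially covered by $\P[|X_i|\geq x]\leq 1$. Once this reduction to a $\mathrm{subG}(2,\tilde{\sigma}_i^2)$-form tail is in place, the remainder is routine Chernoff calculus.
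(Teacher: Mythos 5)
Your strategy coincides with the one underlying the paper's proof, which is simply a citation to Section~2.5.2 of Vershynin's book \cite{vershynin2018high}; that section develops exactly the reduction-to-MGF, tensorization-by-independence, Chernoff-bound argument you describe, so the route is the same, just written out. Your reduction of the hypothesis to a $\mathrm{subG}(2,\tilde{\sigma}_i^2)$ tail via the $\max\{C,1\}$ inflation is sound: for $C\leq 2$ the pointwise inequality $Ce^{-x^2/(2\sigma_i^2)}\leq 2e^{-x^2/(2\tilde{\sigma}_i^2)}$ holds for all $x$, while for $C>2$ one checks $\ln(C/2)\leq (C^2-1)\ln 2$, so the trivial bound $\P[|X_i|\geq x]\leq 1\leq 2e^{-x^2/(2\tilde{\sigma}_i^2)}$ covers the range where the pointwise inequality fails. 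The one imprecision is your claim that the constant $c$ in the MGF bound $\E[e^{\lambda Y}]\leq\exp(c\lambda^2\tilde{\sigma}^2)$ is ``tuned'' so that $4c=8$; that constant is produced by the layer-cake computation rather than freely adjustable, and pinning it down at $c\leq 2$ requires genuine arithmetic that you gesture at but do not carry out. This is exactly the bookkeeping the paper's one-line citation also elides --- Vershynin's Proposition~2.5.2 asserts the sub-Gaussian equivalences only up to unspecified absolute constants --- and since Lemma~\ref{lem:subgaussian2} is applied in the proof of Theorem~\ref{thm:second_concentration} with a generic constant in the exponent, the particular values $2$, $4$, $8$ are not load-bearing there.
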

\begin{proof}
 See Section 2.5.2 in \cite{vershynin2018high}.
\end{proof}
\add{Similar to the estimation in Section~\ref{sec:concentration1}, here we will leverage the concentration inequalities of Wasserstein distance to estimate the essential quantities $\WD(\mu_{G},\mu_{G}^{N})$ in the proof of Theorem \ref{thm:second_concentration}.}
\begin{lemma}
\label{lem:wass_mean_concentration}
Assume $\mu \in \mathcal{P}(\R^{d})$. 
\begin{enumerate}[(i)]
    \item If $\exists q \in (1,2)$ s.t. $M_{q}(\mu) < \infty$, then there exists $C >0$ s.t., for all $N \in \N$ and $x > 0$,
    \begin{equation*}
        \P\Big[\Big\vert \WD(\mu,\mu^{N}) - \E\big[\WD(\mu,\mu^{N})\big] \Big\vert \geq  x\Big] \leq CN(Nx)^{-q}.
    \end{equation*}
    \item If $\exists q \geq 2$ s.t. $M_{q}(\mu) < \infty$, then there exists $C >0$ s.t., for all $N \in \N$ and $x > 0$, 
    \begin{equation*}
        \P\Big[\Big\vert \WD(\mu,\mu^{N}) - \E\big[\WD(\mu,\mu^{N})\big] \Big\vert \geq  x\Big] \leq CN^{-q/2}x^{-q}.
    \end{equation*}
    \item If $\exists \alpha \geq 2, \gamma > 0$ s.t. $\mathcal{E}_{\alpha,\gamma}(\mu) < \infty$, then there exist $c, C >0$ s.t., for all $N \in \N$ and $x > 0$,
    \begin{equation*}
        \P\Big[\Big\vert \WD(\mu,\mu^{N}) - \E\big[\WD(\mu,\mu^{N})\big] \Big\vert \geq  x\Big] \leq 2e^{-cNx^2}.
    \end{equation*}
\end{enumerate}
\end{lemma}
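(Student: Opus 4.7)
The plan is to realize $\WD(\mu,\mu^{N})$ as a separately Lipschitz function of the i.i.d.\ sample and apply concentration inequalities for martingale differences, in the spirit of the mean-concentration approach of Bolley-Villani and Dedecker-Merlev\`ede. By Kantorovich-Rubinstein duality one has
\[
\WD(\mu,\mu^{N}) = \sup_{\|f\|_{\mathrm{Lip}}\le 1}\Big(\int f\,d\mu - \tfrac{1}{N}\sum_{n=1}^{N}f(X^{(n)})\Big),
\]
so the map $F(x_1,\dots,x_N) := \WD\big(\mu,\tfrac{1}{N}\sum_n \delta_{x_n}\big)$ is separately $\tfrac{1}{N}$-Lipschitz: $|F(x)-F(x')|\le\tfrac{1}{N}\|x_n-x'_n\|$ whenever $x$ and $x'$ differ only in the $n$-th coordinate.

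Writing $\calF_n = \sigma(X^{(1)},\dots,X^{(n)})$, I would decompose $F-\E F = \sum_{n=1}^{N}D_n$, where $D_n = \E[F\vert\calF_n]-\E[F\vert\calF_{n-1}]$. Introducing an independent copy $\tilde X^{(n)}$ of $X^{(n)}$ and combining the Lipschitz bound with Jensen's inequality yields $|D_n|\le \tfrac{1}{N}\E[\|X^{(n)}-\tilde X^{(n)}\|\,\vert\calF_n]\le \tfrac{1}{N}(\|X^{(n)}\|+\E\|X\|)$, so each $D_n$ inherits the tails of $\|X\|$, rescaled by $1/N$. The three cases then follow from the appropriate moment/concentration inequality applied to $S:=\sum_n D_n$. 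For (i), the von Bahr-Esseen inequality gives $\E|S|^q\le 2\sum_n \E|D_n|^q\le CN^{1-q}\E\|X\|^q$, and Markov's inequality yields $\P[|S|\ge x]\le CN(Nx)^{-q}$. For (ii), Rosenthal's inequality (the same one invoked in Section~\ref{sec:concentration1}) bounds $\E|S|^q\le C\big(\sum_n\E D_n^2\big)^{q/2}+C\sum_n\E|D_n|^q\le CN^{-q/2}+CN^{1-q}$, the first term dominating for $q\ge 2$, so Markov gives $\P[|S|\ge x]\le CN^{-q/2}x^{-q}$. For (iii), the hypothesis $\mathcal{E}_{\alpha,\gamma}(\mu)<\infty$ with $\alpha\ge 2$ implies that $\|X\|-\E\|X\|$ is sub-Gaussian, hence each $D_n$ is a bounded martingale difference with sub-Gaussian variance proxy $O(1/N^2)$, and Azuma-Hoeffding in its martingale version (equivalently, Lemma~\ref{lem:subgaussian2} applied conditionally on the preceding $\sigma$-fields) produces $\P[|S|\ge x]\le 2e^{-cNx^2}$.

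The main obstacle lies in case (iii): one must convert the $(\alpha,\gamma)$-exponential moment assumption (with $\alpha\ge 2$) into a uniform sub-Gaussian variance proxy on $\|X\|-\E\|X\|$ with a constant depending only on $\alpha,\gamma$ and the first moment, and then combine this with the martingale-difference (rather than independent) structure of the $D_n$'s via a martingale Azuma-Hoeffding inequality or, equivalently, a Herbst-type exponential moment argument on the filtration $(\calF_n)_n$. Cases (i) and (ii), by contrast, are rather mechanical and differ only in whether the subadditive von Bahr-Esseen inequality or the full Rosenthal inequality is used.
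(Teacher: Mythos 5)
The paper does not actually prove this lemma: it cites Section~4.1 of Dedecker--Merlev\`ede \cite{dedecker2015deviation} and moves on. Your proposal reconstructs, from scratch, essentially the argument used in that reference: exploit Kantorovich--Rubinstein duality to see $F(x_1,\dots,x_N)=\WD(\mu,\tfrac1N\sum_n\delta_{x_n})$ as a separately $\tfrac1N$-Lipschitz function, pass to the Doob martingale difference sequence $D_n=\E[F\mid\calF_n]-\E[F\mid\calF_{n-1}]$, bound $|D_n|\le\tfrac1N(\|X^{(n)}\|+\E\|X\|)$, and then apply the appropriate moment or exponential inequality for martingale differences. All three cases go through: von Bahr--Esseen plus Markov gives $CN(Nx)^{-q}$ for $q\in(1,2)$; Rosenthal makes the variance term $(\sum_n\E D_n^2)^{q/2}\le CN^{-q/2}$ dominate for $q\ge2$; and for (iii), since $\mathcal{E}_{\alpha,\gamma}(\mu)<\infty$ with $\alpha\ge2$ forces $\E[e^{\gamma\|X\|^2}]<\infty$, the dominating bound $\tfrac1N(\|X^{(n)}\|+\E\|X\|)$ is sub-Gaussian with variance proxy $O(N^{-2})$ uniformly in $\calF_{n-1}$ (because $X^{(n)}\perp\calF_{n-1}$), and the martingale version of Azuma--Hoeffding then yields $2e^{-cNx^2}$. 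Your own flag that case (iii) needs the passage from a tail bound to a conditional MGF bound is exactly the place where care is needed, but that step is standard once one notes that the conditional law of $D_n$ given $\calF_{n-1}$ is stochastically dominated uniformly by the law of $\tfrac1N(\|X\|+\E\|X\|)$. In short: correct, and the same route as the cited source, simply spelled out.
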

\begin{proof}
    See Section~4.1 in \cite{dedecker2015deviation}.
\end{proof}
Next, we prove a lemma similar to Lemma~\ref{lem:deviation_shift_amp} to address the uniform integrability issue.
\begin{lemma}\label{lem:exp_shift_amp}
Let $\mu \in \mathcal{P}(\R^{dT})$ and assume Setting \ref{setting:3}. Then there exists $c>0$ s.t., for all $t=1,\dots,T-1$, $j\geq 0$, $G \in \Phi^{N,j}_t$, $x>0$ and $N_G \in \N$,
    \begin{equation*}
    \begin{split}
    \P\left[\vert \WD(\mu_G,(\mu_G)^{N_G})- \E[\WD(\mu_G,(\mu_G)^{N_G})]\vert \geq  x\right] \leq 2\exp\left\{-\frac{cN_G x^2}{(1+\mathrm{diam}(G))^2(1+\Vert G \Vert^r)^2}\right\}.
    \end{split}
    \end{equation*}
\end{lemma}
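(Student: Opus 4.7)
The plan is to reduce the problem to the standard mean-concentration inequality for the classical Wasserstein distance, precisely as done for Lemma~\ref{lem:deviation_shift_amp}, but replacing the polynomial moment input by the exponential moment input from Lemma~\ref{lem:shift_amp}(ii). Using the shifting and scaling identity for $\WD$, for any $f_G\in\R^d$ and $\sigma_G,\lambda_G>0$,
\[
\WD(\mu_G,(\mu_G)^{N_G}) \;=\; \sigma_G\lambda_G\,\WD\!\Big(\tilde\mu_G,\,\tilde\mu_G^{N_G}\Big),\qquad \tilde\mu_G:=\tfrac{1}{\lambda_G}\tfrac{\mu_G-f_G}{\sigma_G},
\]
and by linearity this identity passes through the expectation. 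Thus the deviation event for $\WD(\mu_G,(\mu_G)^{N_G})$ at level $x$ is the same as the deviation event for $\WD(\tilde\mu_G,\tilde\mu_G^{N_G})$ at level $x/(\sigma_G\lambda_G)$.

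Next, I would invoke Lemma~\ref{lem:shift_amp}(ii), which under Setting~\ref{setting:3} guarantees that we can choose $f_G$, $\sigma_G$, $\lambda_G$ so that $\mathcal{E}_{1,\alpha}(\tilde\mu_G)$ is uniformly bounded by $e\cdot\sup_{x_{1:t}}\mathcal{E}_{\gamma,\alpha}(\lambda_{x_{1:t}})<\infty$, with the explicit controls $\sigma_G\le c(\Vert G\Vert^r+1)$ (from the growth condition on $\sigma_t$) and $\lambda_G = 2(L_f\mathrm{diam}(G)+1)+2\gamma^{-\alpha}\le C(\mathrm{diam}(G)+1)$. Since $\alpha\ge 2$, the hypothesis of Lemma~\ref{lem:wass_mean_concentration}(iii) is satisfied with a \emph{uniform} constant across all $G$, which is the key point: the sub-Gaussian constant in the mean-concentration inequality depends only on the uniform exponential moment bound, hence we obtain a constant $c_0>0$ independent of $G,N,N_G$ such that
\[
\P\Big[\big\lvert\WD(\tilde\mu_G,\tilde\mu_G^{N_G}) - \E[\WD(\tilde\mu_G,\tilde\mu_G^{N_G})]\big\rvert \ge y\Big] \;\le\; 2\exp\!\big(-c_0 N_G y^2\big),\qquad y>0.
\]

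Finally, applying this to $y=x/(\sigma_G\lambda_G)$ and plugging in the bounds $\sigma_G\lambda_G\le C(1+\Vert G\Vert^r)(1+\mathrm{diam}(G))$ inside the exponent yields
\[
\P\Big[\big\lvert\WD(\mu_G,(\mu_G)^{N_G}) - \E[\WD(\mu_G,(\mu_G)^{N_G})]\big\rvert \ge x\Big] \;\le\; 2\exp\!\Big(-\tfrac{c\,N_G x^2}{(1+\Vert G\Vert^r)^2(1+\mathrm{diam}(G))^2}\Big),
\]
for some $c>0$ depending only on $\alpha,\gamma,L_f,L$ and the uniform exponential noise bound, which is the claim. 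The only mild obstacle here is bookkeeping: one has to make sure the sub-Gaussian constant in Lemma~\ref{lem:wass_mean_concentration}(iii) depends on $\mu$ \emph{only} through its exponential moment bound (so that it can be chosen uniformly in $G$); this is standard from the proof in \cite{dedecker2015deviation} since their constant is a function of $\mathcal{E}_{\alpha,\gamma}(\mu)$ alone. No polynomial-moment-type bookkeeping of summation over cubes is needed in this lemma; that part is deferred to the proof of Theorem~\ref{thm:second_concentration}.
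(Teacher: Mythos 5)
Your proposal is correct and follows essentially the same route as the paper: reduce to the shifted-and-scaled measure $\tilde\mu_G$, invoke Lemma~\ref{lem:shift_amp}(ii) to get a uniform-in-$G$ exponential moment bound, apply Lemma~\ref{lem:wass_mean_concentration}(iii) at the rescaled level $x/(\sigma_G\lambda_G)$, and then plug in $\sigma_G\le c\Vert G\Vert^r+1$ and $\lambda_G\le C(\mathrm{diam}(G)+1)$. Your explicit remark that the sub-Gaussian constant from \cite{dedecker2015deviation} depends on $\mu$ only through its exponential moment bound, and hence is uniform over cubes, is precisely the (implicit) reason the paper's constant $c$ can be taken independent of $t$, $j$, $G$, $N_G$.
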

\begin{proof}
    First, notice that by shifting and scaling of $\WD$, for all $t=1,\dots,T-1$, $G\in\Phi^N_t$, $f_G \in \R^{d}$, $\sigma_G, \lambda_G > 0$ and $N_G \in \N$,
    \begin{equation*}
        \WD(\mu_G,(\mu_G)^{N_G}) = \sigma_G \lambda_G\WD\left(\frac{1}{\lambda_G}\frac{\mu_G - f_G}{\sigma_G}, \left(\frac{1}{\lambda_G}\frac{\mu_G - f_G}{\sigma_G}\right)^{N_G}\right).
    \end{equation*}
    Next, by Lemma~\ref{lem:shift_amp}, we know that $\mathcal{E}_{1,\alpha}\left(\frac{1}{\lambda_G}\frac{\mu_G - f_G}{\sigma_G}\right) \leq e \big(\sup_{x_{1:t}\in \R^{dt}}\mathcal{E}_{\gamma,\alpha}(\lambda_{x_{1:t}})\big)$ where $\sigma_G = \sup_{x_{1:t}\in G}\sigma_t(x_{1:t}) \vee 1$ and $\lambda_G = 2L_f\mathrm{diam}(G) + 2\gamma^{-\alpha}$. Then, by Lemma~\ref{lem:wass_mean_concentration}-(iii), there exists $c>0$ s.t., for all $x >0$ and $N_G\in\N$, 
    \begin{equation*}
    \begin{split}
    &\quad~ \P\left[\vert \WD(\mu_G,(\mu_G)^{N_G})- \E[\WD(\mu_G,(\mu_G)^{N_G})]\vert \geq  x\right]\\
    &= \P\left[\left\vert S - \E[S]\right\vert\geq \frac{x}{\lambda_G\sigma_G}\right]\leq 2\exp\left\{-\frac{cN_G x^2}{\lambda_G^2\sigma_G^2}\right\}, \quad \left(S = \WD\left(\frac{1}{\lambda_G}\frac{\mu_G - f_G}{\sigma_G}, \left(\frac{1}{\lambda_G}\frac{\mu_G - f_G}{\sigma_G}\right)^{N_G}\right)\right).
    \end{split}
    \end{equation*}
     Since $\sigma_G \leq c\Vert G \Vert^r +1$ by Setting~\ref{setting:3}, there exists $c>0$ s.t., for all $x >0$ and $N_G\in\N$, 
    \begin{equation*}
    \begin{split}
    \P\left[\vert \WD(\mu_G,(\mu_G)^{N_G})- \E[\WD(\mu_G,(\mu_G)^{N_G})]\vert \geq  x\right] \leq 2\exp\left\{-\frac{cN_G x^2}{(1+\mathrm{diam}(G))^2(1+\Vert G \Vert^r)^2}\right\}.
    \end{split}
    \end{equation*}
\end{proof}
The two lemmas below summarize some estimations that will be used multiple times in the proof of Theorem \ref{thm:second_concentration}.
\begin{lemma}
\label{lem:compact_exp_bound}
Let $\alpha \geq 2$, $\gamma > 0$, and $\mu \in \mathcal{P}(\R^{dT})$ with finite $(\alpha,\gamma)$-exponential moment. Let $(X^{(n)})_{n\in\N}$ be i.i.d. samples of $\mu$. Then there exist $C,c>0$ s.t., for all $R\geq 1$, $N \in \N$,
\begin{equation*}
        \P[\max_{n=1,\dots,N}\Vert X^{(n)}\Vert > R] \leq CNe^{-cR^2}.
\end{equation*}
\end{lemma}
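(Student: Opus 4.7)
The plan is to apply a Markov-type bound to the exponential moment and then combine it with a union bound. Since the statement only needs $R\geq 1$, we can freely compare $R^{\alpha}$ with $R^{2}$ using $\alpha\geq 2$.

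More concretely, first I would use exponential Markov: for a single sample $X^{(1)}\sim\mu$ and any $R>0$,
\begin{equation*}
\P\big[\Vert X^{(1)}\Vert > R\big]
= \P\big[e^{\gamma\Vert X^{(1)}\Vert^{\alpha}} > e^{\gamma R^{\alpha}}\big]
\leq e^{-\gamma R^{\alpha}}\,\mathcal{E}_{\alpha,\gamma}(\mu),
\end{equation*}
and the last factor is finite by assumption. Then I would use $R\geq 1$ and $\alpha\geq 2$ to bound $R^{\alpha}\geq R^{2}$, so that $e^{-\gamma R^{\alpha}}\leq e^{-\gamma R^{2}}$.

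Finally, a union bound over the $N$ i.i.d.\ samples gives
\begin{equation*}
\P\big[\max_{n=1,\dots,N}\Vert X^{(n)}\Vert > R\big]
\leq \sum_{n=1}^{N}\P\big[\Vert X^{(n)}\Vert > R\big]
\leq N\,\mathcal{E}_{\alpha,\gamma}(\mu)\,e^{-\gamma R^{2}},
\end{equation*}
so setting $C=\mathcal{E}_{\alpha,\gamma}(\mu)$ and $c=\gamma$ finishes the proof. There is no real obstacle here: the estimate is a direct consequence of Markov's inequality applied to the exponential moment together with the comparison $R^{\alpha}\geq R^{2}$ valid on $R\geq 1$. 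The only mild subtlety is remembering to restrict to $R\geq 1$, which is precisely what the statement allows, and to absorb the constant $\mathcal{E}_{\alpha,\gamma}(\mu)$ into $C$.
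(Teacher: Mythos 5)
Your proof is correct and follows essentially the same strategy as the paper: exponential Markov on $e^{\gamma\Vert X\Vert^\alpha}$, the comparison $R^\alpha\geq R^2$ on $R\geq 1$, and then controlling the maximum over $N$ samples. The only difference is cosmetic — the paper writes $\P[\max_n\Vert X^{(n)}\Vert>R]=1-(1-\P[\Vert X^{(1)}\Vert>R])^N$ and Taylor-expands, whereas you use the union bound $\P[\max_n\Vert X^{(n)}\Vert>R]\leq N\,\P[\Vert X^{(1)}\Vert>R]$ directly, which is in fact the cleaner route to the same bound $CNe^{-cR^2}$.
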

\begin{proof}
    Since $\mathcal{E}_{\gamma,\alpha}(\mu) < \infty$, for all $R\geq 1$ we have
    \begin{equation*}
        \P\left[\Vert X^{(1)}\Vert > R\right] = \P\left[\exp\{\gamma\Vert X^{(1)}\Vert^\alpha \}\geq \exp\{\gamma R^\alpha \}\right] \leq \mathcal{E}_{\gamma,\alpha}(\mu) e^{-\gamma R^\alpha} \leq \mathcal{E}_{\gamma,\alpha}(\mu) e^{-\gamma R^2}.
    \end{equation*}
    Thus, there exists $C>0$ s.t.  
    \begin{equation*}
        \begin{split}
            \P[\max_{n=1,\dots,N}\Vert X^{(n)}\Vert > R] &= 1 - (1 - \P[\Vert X^{(1)}\Vert > R])^{N}\leq 1 - (1 - Ce^{-\gamma R^2})^{N}\\
            &\leq N\vert 1-Ce^{-\gamma R^2}\vert Ce^{-\gamma R^2} + o(e^{-\gamma R^2})\quad (\text{Taylor expansion}).
        \end{split}
    \end{equation*}
    Therefore, there exist $C,c>0$ s.t., for all $R\geq 1$, $N \in \N$,
    \begin{equation*}
        \P[\max_{n=1,\dots,N}\Vert X^{(n)}\Vert > R] \leq CNe^{-cR^2}.
\end{equation*}
\end{proof}
\begin{lemma}
    \label{lem:mc1}
    Let $\mu\in\calP(\R^{dT})$ and $\mu^N$ be the empirical measure of $\mu$. Then, for all $t=1,\dots,T-1$, $N \in \N$ and $x > 0$,
    \begin{align*}
      \P\Big[\big\vert \tilde{Y}_t - \E[\tilde{Y}_t]\big\vert \geq x \Big] \leq 2e^{-2Nx^2},
    \end{align*}
    where 
    \begin{equation*}
        \tilde{Y}_t = N^{-1}\sum_{G\in \Phi^{N}_t}(N\mu^{N}(G))^{1-\frac{1}{d}}.
    \end{equation*}
    
\end{lemma}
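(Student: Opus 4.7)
The plan is to apply McDiarmid's bounded difference inequality, viewing $\tilde{Y}_t$ as a function of the i.i.d.\ samples $X^{(1)},\dots,X^{(N)}$. Write $n_G \coloneqq N\mu^{N}(G) = \sum_{n=1}^{N}\mathbbm{1}_{\{X^{(n)}\in G\}}$ so that $\tilde{Y}_t = N^{-1}\sum_{G\in\Phi^{N}_t}n_G^{\,1-1/d}$, and fix $k\in\{1,\dots,N\}$. I want to show that replacing $X^{(k)}$ by an independent copy $\tilde{X}^{(k)}$ alters $\tilde{Y}_t$ by at most $1/N$. If $X^{(k)}$ and $\tilde{X}^{(k)}$ lie in the same cube, $\tilde{Y}_t$ is unchanged; otherwise they lie in distinct cubes $G\neq G'$, in which case $n_G$ decreases by $1$ and $n_{G'}$ increases by $1$, so the increment is
\begin{equation*}
\Delta = N^{-1}\bigl[(n_{G'}+1)^{1-1/d}-n_{G'}^{\,1-1/d}\bigr] - N^{-1}\bigl[n_G^{\,1-1/d}-(n_G-1)^{1-1/d}\bigr].
\end{equation*}

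The main (very small) obstacle is to verify that $\lvert\Delta\rvert\leq 1/N$, which is tighter than the naive bound $2/N$ that McDiarmid would give from handling the two increments separately. I would use concavity and monotonicity of $x\mapsto x^{1-1/d}$ on $[0,\infty)$ to show $(n+1)^{1-1/d}-n^{1-1/d}\in[0,1]$ for all integers $n\geq 0$: indeed the difference is nonnegative by monotonicity, and for $n\geq 1$ concavity bounds it by the derivative $(1-1/d)n^{-1/d}\leq 1$, while for $n=0$ it equals $1$. Setting $a=n_G^{\,1-1/d}-(n_G-1)^{1-1/d}\in[0,1]$ and $b=(n_{G'}+1)^{1-1/d}-n_{G'}^{\,1-1/d}\in[0,1]$, one has $N\Delta=b-a\in[-1,1]$, giving the desired bound $\lvert\Delta\rvert\leq 1/N$.

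With the bounded difference constants $c_k=1/N$ for $k=1,\dots,N$ established, McDiarmid's inequality yields
\begin{equation*}
\P\bigl[\lvert\tilde{Y}_t-\E[\tilde{Y}_t]\rvert\geq x\bigr] \leq 2\exp\Bigl(-\frac{2x^{2}}{\sum_{k=1}^{N}c_k^{2}}\Bigr) = 2\exp\bigl(-2Nx^{2}\bigr),
\end{equation*}
which is exactly the claimed estimate. No further moment or integrability assumption on $\mu$ is needed because the bounded difference property holds pointwise in the samples.
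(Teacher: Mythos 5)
Your proposal is correct and follows essentially the same route as the paper: both apply McDiarmid's bounded-difference inequality after showing that changing one sample can move at most one unit of mass between two cells, and that $n\mapsto n^{1-1/d}$ has increments bounded by $1$ on the nonnegative integers. The only cosmetic difference is that the paper works with the unnormalized $f = N\tilde{Y}_t$ and difference constant $c_i=1$, while you work with $\tilde{Y}_t$ directly and $c_i = 1/N$; these yield the identical bound $2e^{-2Nx^2}$.
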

\begin{proof}
    Let $\tilde{Y}_t = N^{-1}\sum_{G\in \Phi^{N}_t}(N\mu^{N}(G))^{1-\frac{1}{d}}$ and $f(x_1,\dots,x_N) = \sum_{G\in \Phi^{N}_t} \Big(\sum_{j=1}^{N}\mathbbm{1}_{G}(x_j)\Big)^{1-\frac{1}{d}}$. Notice that $\tilde{Y}_{t} = \frac{1}{N}f(X_1,\dots, X_N)$. We are going to show that $f$ satisfies the $(c_1,\dots,c_N)$-bounded differences property in \add{McDiarmid's inequality, see \cite{mcdiarmid1989method}}. To simplify notations, for all $G \in \Phi^N_t$ and $x \in (\R^{dt})^N$, we set $f_{G}(x) = \Big(\sum_{j=1}^{N}\mathbbm{1}_{G}(x_j)\Big)^{1-\frac{1}{d}}$. \add{Fix any $i = 1,\dots, N$ and let $x = (x_1, \dots, x_N)$ and $ x' = (x'_1,\dots, x'_N) \in \R^{dTN}$ differ only in the $i$-th coordinate ($x_j = x_j^{\prime}$ for all $j \neq i$).} Then $f(x) - f(x') = \sum_{G \in \Phi_t^N}\Big(f_{G}(x) - f_{G}(x')\Big)$. Now, if there exists $G^{*}\in\Phi^N_t$ such that $x_i,x'_i \in G^{*}$, then clearly $f(x) = f(x')$. Thus, it remains to consider the case when there exist disjoint $G^{*}, G^{*'}\in\Phi^N_t$ such that $x_i\in G^{*}$ while $x'_i\in G^{*'}$. In this case, for all $G \in \Phi^N_t-\{G^{*},G^{*'}\}$, $x_i \not\in G$ and $x'_i \not \in G$, we have
    \begin{equation*}
        f_{G}(x) = \Big(\sum_{j\neq i}\mathbbm{1}_{G}(x_j)\Big)^{1-\frac{1}{d}} = \Big(\sum_{j\neq i}\mathbbm{1}_{G}(x'_j)\Big)^{1-\frac{1}{d}} = f_{G}(x').
    \end{equation*}
    Therefore 
    \begin{equation*}
        \begin{aligned}
            f(x) - f(x') &= \sum_{G \in \Phi_t^N}\big(f_{G}(x) - f_{G}(x')\big)
            = \sum_{G \in \{G^{*},G^{*'}\}}\Big(f_{G}(x) - f_{G}(x')\Big)\\
            &= \sum_{G \in \{G^{*},G^{*'}\}}\Big[\Big(\sum_{j=1}^{N}\mathbbm{1}_{G}(x_j)\Big)^{1-\frac{1}{d}} - \Big(\sum_{j=1}^{N}\mathbbm{1}_{G}(x'_j)\Big)^{1-\frac{1}{d}}\Big]\\
            &= \sum_{G \in \{G^{*},G^{*'}\}}\Big[\Big(\sum_{j\neq i}\mathbbm{1}_{G}(x_j) + \mathbbm{1}_{G}(x_i)\Big)^{1-\frac{1}{d}} - \Big(\sum_{j\neq i}\mathbbm{1}_{G}(x'_j) + \mathbbm{1}_{G}(x'_i)\Big)^{1-\frac{1}{d}}\Big]\\
            &= \sum_{G \in \{G^{*},G^{*'}\}}\Big[\Big(\sum_{j\neq i}\mathbbm{1}_{G}(x_j) + \mathbbm{1}_{G}(x_i)\Big)^{1-\frac{1}{d}} -\Big(\sum_{j\neq i}\mathbbm{1}_{G}(x_j) + \mathbbm{1}_{G}(x'_i)\Big)^{1-\frac{1}{d}}\Big].
        \end{aligned}
    \end{equation*}
    To simplify notation, we let $z = \sum_{j\neq i}\mathbbm{1}_{G^{*}}(x_j)$ and $ z' = \sum_{j\neq i}\mathbbm{1}_{G^{*'}}(x_j)$. Then
    \begin{equation*}
       \begin{aligned}
        f(x) - f(x') &= \Big[\big(z + 1\big)^{1-\frac{1}{d}} - (z)^{1-\frac{1}{d}}\Big] + \Big[(z')^{1-\frac{1}{d}} - \big(z' + 1\big)^{1-\frac{1}{d}}\Big]\\
        &\leq \max_{z_{0}\geq 0}\Big[\big(z_{0} + 1\big)^{1-\frac{1}{d}} - (z_{0})^{1-\frac{1}{d}}\Big]= 1.
    \end{aligned}
    \end{equation*}
    Clearly, the same estimate holds when exchanging the role of $x$ and $x'$, thus we proved the claim that $f$ satisfies the $(c_1,\dots,c_N)$-bounded differences property with $c_i = 1, \forall i\in\N$. Therefore, we can apply \add{McDiarmid's inequality (see \cite{mcdiarmid1989method})} with $c_i = 1, \forall i\in\N$, which implies that there exist $C,c >0$ such that, for all $N \in \N$ and $x > 0$,
    \begin{equation*}
        \P\Big[\big\vert \tilde{Y}_t - \E[\tilde{Y}_t] \big\vert \geq x \Big] \leq 2e^{-2 N x^2} .
    \end{equation*}
\end{proof}

\begin{proof}[Proof of Theorem \ref{thm:second_concentration}]
    \hypertarget{proof:second_concentration_uniform}{\,\!} 
    \textbf{Step 1:} In this step, all statements hold for both uniform and non-uniform adapted empirical measures. Note that, by Lemma \ref{lem:helper}, there exists $C > 0$ such that, for all $N \in \N$,
    \begin{equation}\label{eq:thm:concentration2:1}
            \AWD(\mu,\bar{\mu}^{N}) \leq C\Big(\Delta_N  + \WD(\mu_{1},\mu^{N}_{1}) + \sum_{t=1}^{T-1}\sum_{G\in\Phi_{t}^{N}}\mu^{N}(G)\WD(\mu_{G},\mu_{G}^{N})\Big)
            + \frac{C\Delta_N}{N}\sum_{n=1}^{N}\Vert X^{(n)} \Vert.
    \end{equation}
    We are now going to estimate the deviation of each term. First, we estimate the deviation of $\WD(\mu_1,\mu^{N}_1)$. Since in Setting~\ref{setting:3} we have $\alpha \geq 2$, we can apply Lemma~\ref{lem:wass_mean_concentration}-(iii) to $\mu_1$. Then, there exists $C >0$ s.t., for all $N \in \N$ and $x > 0$,
    \begin{equation}\label{eq:thm:concentration2:2}
        \P\Big[\Big\vert \WD(\mu_1,\mu_1^{N}) - \E\big[\WD(\mu_1,\mu_1^{N})\big] \Big\vert \geq  x\Big] \leq 2e^{-cNx^2}.
    \end{equation}
    Moreover, since finite $(\alpha,\gamma)$-exponential moment with $\alpha \geq 2$ implies finite moments of any order, we can apply Theorem~\ref{thm:wass_rate} to $\mu_1$ and obtain, for $q \geq 1 + \frac{1}{d-1}$, that
    \begin{equation}\label{eq:thm:concentration2:3}
        \E\big[\WD(\mu_{1},\mu^{N}_{1})\big] \leq CN^{-\frac{1}{d}} + CN^{-\frac{q-1}{q}} \leq CN^{-\frac{1}{d}} \leq \mathrm{rate}(N).
    \end{equation}
    Thus, combining \eqref{eq:thm:concentration2:2} and \eqref{eq:thm:concentration2:3}, we have that 
    \begin{equation}\label{eq:thm:concentration2:4}
        \P\big[\WD(\mu_{1},\mu^{N}_{1}) \geq x + \mathrm{rate}(N)\big] \leq 2e^{-cNx^2}.
    \end{equation}
    Next, we estimate the deviation of $\frac{\Delta_N}{N}\sum_{n=1}^{N}\Vert X^{(n)} \Vert$. Notice that 
    \begin{equation}
    \label{eq:thm:concentration2:5}
        \begin{split}
            \Delta_N\frac{1}{N}\sum_{n=1}^{N}\Vert X^{(n)} \Vert \leq C\Delta_N + \Delta_N\frac{1}{N}\sum_{n=1}^{N}\big(\Vert X^{(n)} \Vert - \E[\Vert X^{(n)} \Vert]\big).
        \end{split}
    \end{equation}
    As before, we let $Z_{n} = \big(\Vert X^{(n)} \Vert - \E[\Vert X^{(n)} \Vert]\big)$ for all $n=1,\dots,N$. Notice that $Z_{1},\dots,Z_{N}$ are i.i.d. and $Z_1\sim \mathrm{subG}(1,\sigma^2)$, $\sigma^2 = \frac{1}{2\gamma}$, because of the exponential moment assumption. Then, by Lemma~\ref{lem:subgaussian2} we have $\frac{1}{N}\sum_{n=1}^{N}Z_n \sim \mathrm{subG}(2,\frac{4\sigma^2}{N})$, i.e., for all $x >0$ and $N \in \N$,
    \begin{equation}\label{eq:thm:concentration2:6}
        \P\Big[\frac{1}{N}\sum_{n=1}^{N}Z_n\geq x\Big] \leq 2e^{-\frac{Nx^2}{8\sigma^2}}.
    \end{equation}
    Combining \eqref{eq:thm:concentration2:5} and \eqref{eq:thm:concentration2:6}, we conclude that 
    \begin{equation}\label{eq:thm:concentration2:7}
    \P\Big[\Delta_N\frac{1}{N}\sum_{n=1}^{N}\Vert X^{(n)} \Vert \geq x + \mathrm{rate}(N)\Big] \leq 2e^{-\frac{Nx^2}{8\sigma^2}}.
    \end{equation}
    Now let $Y_t \coloneqq\sum_{G\in\Phi_{t}^{N}}\mu^{N}(G)\WD(\mu_{G},\mu_{G}^{N})$, for $t = 1,\dots, T-1$. Combining \eqref{eq:thm:concentration2:1}, \eqref{eq:thm:concentration2:4} and \eqref{eq:thm:concentration2:7}, we have that there exist $C,c >0$ s.t., for all $x>0$,
     \begin{equation}
     \label{eq:thm:concentration2:8}
        \begin{split}
            \P\Big[\AWD(\mu,\bar{\mu}^{N}) \geq x + \mathrm{rate}(N)\Big]\leq \sum_{t=1}^{T-1}\P\Big[Y_t \geq \frac{x}{3T} + \mathrm{rate}(N)\Big] + Ce^{-cNx^2}.
        \end{split}
    \end{equation}
    Therefore, we are left to estimate the deviation of $Y_t$. By Lemma~\ref{lem:helper2} and Lemma~\ref{lem:exp_shift_amp}, there exists $c>0$ s.t., for all $t=1,\dots,T-1$, $j\geq 0$, $G \in \Phi^{N,j}_t$, $x>0$ and $N \in \N$,
    \begin{equation}
    \label{eq:thm:concentration2:9}
    \begin{split}
    \P\big[|\WD(\mu_G,\mu_G^{N})- \E[\WD(\mu_G,\mu_G^{N})\vert\mu^N(G)]| \geq x \big\vert \mu^{N}(G) \big] \leq 2\exp\left\{-\frac{cN_G x^2}{(1+\mathrm{diam}(G))^2(1+\Vert G \Vert^r)^2}\right\}.
    \end{split}
    \end{equation}
    Let $\alpha_G = \mu^{N}(G) \geq 0$, $X_{G} = \WD(\mu_G,\mu_G^{N}) - \add{\E[\WD(\mu_G,\mu_G^{N})|\mu^N(G)]}$ and $\sigma_G^2 = \add{\frac{(1+\mathrm{diam}(G))^2(1+\Vert G \Vert^r)^2}{2c(N\mu^{N}(G))}}$, for all $G \in \Phi_t^N$. Then, for all $G \in \Phi_t^N$, $X_G$ is centered by definition, and \add{$(X_G)_{G\in\Phi^N_t}$ are independent given $\{\mu^{N}(G)\colon G\in\Phi_{t}^{N}\}$ by Lemma~\ref{lem:helper2}.} Moreover, by rewriting \eqref{eq:thm:concentration2:9} as
    \begin{equation*}
        \P\big[|X_G| \geq x\big\vert \mu^{N}(G)\big] \leq C e^{-\frac{x^2}{2\sigma^2_G}},
    \end{equation*}
    we have that $X_G \sim \mathrm{subG}(C,\sigma_G^2)$ conditional on $\{\mu^{N}(G)\colon G\in\Phi_{t}^{N}\}$. Thus we can apply Lemma \ref{lem:subgaussian2} and deduce that $\sum_{G \in \Phi_t^N}\alpha_G X_G \sim \mathrm{subG}(2,4\tilde{\sigma}^2\max\{1,C^2\})$ conditional on $\{\mu^{N}(G)\colon G\in\Phi_{t}^{N}\}$, i.e., for all $x>0$,
    \begin{equation*}
        \mathbb{P}\Big[\Big\vert\sum_{G \in \Phi_t^N}\alpha_G X_G\Big\vert \geq x \Big\vert \mu^N(G) \Big]  \leq 2\exp\Big\{-\frac{x^2}{8\tilde{\sigma}^2\max\{1,C^2\}}\Big\},
    \end{equation*}
    where
    \begin{align*}
        \tilde{\sigma}^2 &= \sum_{G \in \Phi_t^N}\alpha_G^2 \sigma_G^2
        = \sum_{G \in \Phi_t^N}\mu^{N}(G)^2 \frac{\add{(1+\mathrm{diam}(G))^2}(1+\Vert G \Vert^r)^2}{2cN\mu^{N}(G)}
        = \frac{1}{2cN}\add{\sum_{G \in \Phi_t^N}\mu^{N}(G)(1+\mathrm{diam}(G))^2(1+\Vert G \Vert^r)^2.}
    \end{align*}
    Therefore, we conclude that there exist $C, c > 0$ s.t., for all $x>0$,
    \begin{equation}\label{eq:thm:concentration2:10}
    \begin{aligned}
        &\quad~\P\Big[\Big\vert Y_t - \E\big[Y_t\vert\mu^N(G)\big]\Big\vert \geq x\Big] = \P\Big[\Big\vert \sum_{G\in\Phi_{t}^{N}}\mu^{N}(G)\big(\WD(\mu_{G},\mu_{G}^{N}) - \E\big[\WD(\mu_{G},\mu_{G}^{N})\vert\,\mu^{N}(G)\big]\big)\Big\vert \geq x\Big]\\
        &= \P\Big[\big\vert\sum_{G \in \Phi_t^N}\alpha_G X_G\big\vert \geq x\Big]
        = \E\Big[\P\Big[\big\vert\sum_{G \in \Phi_t^N}\alpha_G X_G\big\vert \geq x  \Big\vert\,\mu^{N}(G)\Big] \Big]\leq\E\Big[2\exp\Big\{-\frac{x^2}{8\tilde{\sigma}^2\max\{1,C^2\}}\Big\} \Big]\\ 
        &= \add{\E\Big[2\exp\Big\{-\frac{cNx^2}{4\max\{1,C^2\}\sum_{G \in \Phi_t^N}\mu^{N}(G)(1+\mathrm{diam}(G))^2(1+\Vert G \Vert^r)^2}\Big\}\Big].}
    \end{aligned}
    \end{equation}
    Notice that $Y_t \leq \vert Y_t - \E[Y_t\vert\,\mu^{N}(G)]\vert + \E[Y_t\vert\,\mu^{N}(G)]$. In the next three steps, we estimate the deviation of $\vert Y_t - \E[Y_t\vert\,\mu^{N}(G)]\vert$ and $\E[Y_t\vert\,\mu^{N}(G)]$.\\
    \textbf{Step 2:} In this step, we estimate the deviation of $\vert Y_t - \E[Y_t\vert\,\mu^{N}(G)]\vert$ by considering separately three cases: (i) uniform grid and $r=0$; (ii) uniform grid and $r>0$; (iii) non-uniform grid and $r\geq 0$.\\
    (i) Under uniform grid and $r=0$, $\sum_{G \in \hat{\Phi}_t^N}\mu^{N}(G)(1+\mathrm{diam}(G))^2(1+\Vert G \Vert^r)^2 \leq 8$. Then, from \eqref{eq:thm:concentration2:10}, there exists $c>0$ s.t., for all $x>0$ and $N\in\N$,
    \begin{equation}
    \label{eq:thm:concentration2:11}
        \P\Big[\Big\vert Y_t - \E\big[Y_t\vert\,\mu^{N}(G)\big]\Big\vert \geq x\Big]  \leq 2\exp\Big\{-cNx^2\Big\}.
    \end{equation}
    (ii) Under uniform grid and $r>0$, by Lemma \ref{lem:compact_exp_bound}, there exist $C,c > 0$ s.t., for all $R\geq 1$,
    \begin{equation}
    \label{eq:thm:concentration2:13}
        \P[\max_{n=1,\dots,N}\Vert X^{(n)}\Vert > R] \leq CNe^{-cR^2}.
    \end{equation}
    Notice that, for any $R\geq 1$, if $\Vert X^{(n)} \Vert \leq R$ for all $n=1,\dots,N$, then $\sum_{G \in \hat{\Phi}_t^N}\mu^{N}(G)(1+\mathrm{diam}(G))^2(1+\Vert G \Vert^r)^2 \leq 4(1+R^r)^2 \leq 16R^{2r}$. Combining this with \eqref{eq:thm:concentration2:10} and \eqref{eq:thm:concentration2:13}, we obtain that there exist $C, c_1,c_2 > 0$ s.t., for all $R\geq 1$,
    \begin{equation}
    \label{eq:thm:concentration2:14}
        \P\Big[\Big\vert Y_t - \E\big[Y_t\vert\,\mu^{N}(G)\big]\Big\vert \geq x\Big] \leq 2\exp\Big\{-\frac{c_1 Nx^2}{16R^{2r}}\Big\} + CNe^{-c_2 R^2}.
    \end{equation}
    By choosing $R = (\sqrt{N}x)^{\frac{1}{r+1}}$, there exist $c,C>0$ s.t., for all $x>0$ and $N \in \N$ satisfying $\sqrt{N}x \geq 1$,
    \begin{equation}
    \label{eq:thm:concentration2:15}
        \P\Big[\Big\vert Y_t - \E\big[Y_t\vert\,\mu^{N}(G)\big]\Big\vert \geq x\Big] \leq CNe^{-c (\sqrt{N}x)^{\frac{1}{r+1}}}.
    \end{equation}
    (iii) Under non-uniform grid and $r\geq 0$, notice that, for any $R\geq 1$, if $\Vert X^{(n)} \Vert \leq R$ for all $n=1,\dots,N$, then $\sum_{G \in \check{\Phi}_t^N}\mu^{N}(G)(1+\mathrm{diam}(G))^2(1+\Vert G \Vert^r)^2 \leq (1+R)^2(1+R^r)^2 \leq 16 R^{2(r+1)}$. Thus, by the same argument used in (ii), we conclude that there exist $c,C>0$ s.t., for all $x>0$ and $N \in \N$ satisfying $\sqrt{N}x \geq 1$,
    \begin{equation}
    \label{eq:thm:concentration2:15.1}
        \P\Big[\Big\vert Y_t - \E\big[Y_t\vert\,\mu^{N}(G)\big]\Big\vert \geq x\Big] \leq CNe^{-c (\sqrt{N}x)^{\frac{1}{r+2}}}.
    \end{equation}
    \textbf{Step 3:} In this step, we start estimating the deviation of $\E[Y_t\vert\,\mu^{N}(G)]$. All statements here hold for both uniform and non-uniform cases. Since finite $(\alpha,\gamma)$-exponential moment with $\alpha\geq 2$ implies finite $p$-exponential moment for all $p\geq 1$, Setting~\ref{setting:3} implies Setting~\ref{setting:1} and Setting~\ref{setting:2}. Then we can invoke \eqref{eq:lem:mean_shift_amp:3} in the proof of Lemma~\ref{lem:mean_shift_amp} with $p > \frac{d}{d-1}$, and  conclude that there exists $C>0$ s.t.  
    \begin{equation}\label{eq:thm:concentration2:16}
        \E\big[Y_t|\mu^{N}(G) \big] \leq CN^{-1}\sum_{G\in \Phi^{N}_t}\add{(\Vert G\Vert^r+1)(\mathrm{diam}(G) + 1)}(N\mu^{N}(G))^{1-\frac{1}{d}}.
    \end{equation}
    For simplicity, we use the notation
    \begin{equation*}
        \tilde{Y}_{t} \coloneqq N^{-1}\sum_{G\in \Phi^{N}_t}(N\mu^{N}(G))^{1-\frac{1}{d}}, \quad \add{\hat{Y}_t  \coloneqq N^{-1}\sum_{G\in \Phi^{N}_t}(\Vert G\Vert^r+1)(\mathrm{diam}(G) + 1)(N\mu^{N}(G))^{1-\frac{1}{d}}. }
    \end{equation*}
    \add{Then, \eqref{eq:thm:concentration2:16} is equivalent to $\E\big[Y_t|\mu^{N}(G) \big] \leq C \hat{Y}_t$. Here $\tilde{Y}_t$ is introduced in order to bound $\hat{Y}_t$ in the next step, so we estimate $\tilde{Y}_t$ first.} By Lemma~\ref{lem:mc1}, for all $t=1,\dots,T-1$, $N \in \N$ and $x > 0$,
    \begin{equation}\label{eq:thm:concentration2:17}
        \P\Big(\big\vert \tilde{Y}_t - \E[\tilde{Y}_t] \big\vert \geq x \Big) \leq 2e^{-2 N x^2}.
    \end{equation}
    Recall \eqref{eq:thm:moment_estimate:8} in the proof of Theorem \ref{thm:moment_estimate}, which shows that $\E[\tilde{Y}_{t}] \leq \mathrm{rate}(N)$. Combining this and \eqref{eq:thm:concentration2:17}, we conclude that, for all $N\in\N$ and $x>0$,
    \begin{align}\label{eq:thm:concentration2:19}
        \P\Big[\tilde{Y}_t \geq x + \mathrm{rate}(N)\Big] \leq \P\Big[\Big\vert \tilde{Y}_t - \E[\tilde{Y}_t] \Big\vert  \geq x \Big] + \P\Big[\E[\tilde{Y}_t]\geq \mathrm{rate}(N)\Big]\leq 2e^{-2Nx^2},
    \end{align}
    \add{where we recall that $\mathrm{rate}(N) = C \Delta_N = CN^{-\frac{1}{dT}}$ and $C$ is a generic constant.}\\
    \textbf{Step 4:} In this step, we finish estimating the deviation of $\E[Y_t\vert\,\mu^{N}(G)]$ and complete the proof. We bound the deviation of $\hat{Y}_t$ by considering separately three cases: (i) uniform grid and $r=0$; (ii) uniform grid and $r>0$; (iii) non-uniform grid and $r\geq 0$.\\
    (i) Under uniform grid and $r=0$, for all $G\in\hat{\Phi}^N_t$ we have that $\mathrm{diam}(G) \leq 1$ and $\Vert G \Vert^r \leq 1$, thus $\hat{Y}_t \leq 4 \tilde{Y}_t$. Therefore, by \eqref{eq:thm:concentration2:19}, there exist $c,C>0$ s.t., for all $x>0$, 
    \begin{equation}
    \label{eq:thm:concentration2:20}
        \begin{split}
            \P\Big[\hat{Y}_t \geq x + \mathrm{rate}(N)\Big] &\leq \P\Big[\tilde{Y}_t \geq x/4 + \mathrm{rate}(N)\Big]  \leq 2e^{-cNx^2}.
        \end{split}
    \end{equation}
    Combining \eqref{eq:thm:concentration2:11}, 
    \eqref{eq:thm:concentration2:16} and \eqref{eq:thm:concentration2:20}, we obtain that there exist $C,c >0$ s.t., for all $x>0$, $N \in \N$,
    \begin{equation*}
        \begin{split}
            \P\Big[Y_t \geq x + \mathrm{rate}(N)\Big] &\leq C\P\Big[\Big\vert Y_t - \E\big[Y_t\vert\,\mu^{N}(G)\big]\Big\vert \geq x/2 + \mathrm{rate}(N)\Big] + C\P\Big[ \E\big[Y_t\vert\,\mu^{N}(G)\big] \geq x/2 + \mathrm{rate}(N)\Big] \\
            &\leq Ce^{-cNx^2} + C\P\Big[ C\hat{Y}_t \geq x + \mathrm{rate}(N)\Big]  \leq Ce^{-cNx^2}.
        \end{split}
    \end{equation*}
    Then, by \eqref{eq:thm:concentration2:8}, there exist $C,c >0$ s.t., for all $x>0$ and $N \in \N$ satisfying $N\sqrt{x} \geq 1$,
    \begin{equation*}
        \begin{split}
            \P\Big[\AWD(\mu,\hat{\mu}^{N}) \geq x + \mathrm{rate}(N)\Big] \leq Ce^{-cNx^2}.
        \end{split}
    \end{equation*}
    (ii) Under uniform grid and $r>0$, for any $R\geq 1$, if $\Vert X^{(n)}\Vert \leq R$ for all $n \in \N$, then $\hat{Y}_t \leq (1+R^r) \tilde{Y}_t \leq 4R^r \tilde{Y}_t$. Combining this and \eqref{eq:thm:concentration2:19} with $R = (\sqrt{N}x)^{\frac{1}{r+1}}$, we conclude that there exist $C,c >0$ s.t., for all $x>0$ and $N \in \N$ satisfying $N\sqrt{x} \geq 1$,
    \begin{equation*}
            \begin{split}
                \P\Big[\hat{Y}_t \geq x\Big] \leq \P\Big[\tilde{Y}_t \geq \frac{x}{4R^r}\Big] +  \P[\max_{n=1,\dots,N}\Vert X^{(n)}\Vert > R] \leq 2e^{\frac{-cNx^2}{16R^{2r}}} + CNe^{-cR^2} \leq CNe^{-c (\sqrt{N}x)^{\frac{1}{r+1}}}.
            \end{split}
    \end{equation*}
    Together with
    \eqref{eq:thm:concentration2:8}, \eqref{eq:thm:concentration2:15}, 
    \eqref{eq:thm:concentration2:16} and \eqref{eq:thm:concentration2:20}, this ensures the existence of $C,c >0$ s.t., for all $x>0$ and $N \in \N$ satisfying $N\sqrt{x} \geq 1$,
    \begin{equation*}
        \begin{split}
            \P\Big[\AWD(\mu,\hat{\mu}^{N}) \geq x + \mathrm{rate}(N)\Big] \leq CNe^{-c (\sqrt{N}x)^{\frac{1}{r+1}}}.
        \end{split}
    \end{equation*}
    (iii) Under non-uniform grid and $r\geq 0$, for any $R\geq 1$, if $\Vert X^{(n)}\Vert \leq R$ for all $n \in \N$, then $\hat{Y}_t \leq (1+R)(1+R^r) \tilde{Y}_t \leq 4R^{r+1}\tilde{Y}_t$. Combining this and \eqref{eq:thm:concentration2:19} with  $R = (\sqrt{N}x)^{\frac{1}{r+2}}$, we obtain that there exist $C,c >0$ s.t., for all $x>0$ and $N \in \N$ satisfying $N\sqrt{x} \geq 1$,
    \begin{equation*}
            \begin{split}
                \P\Big[\hat{Y}_t \geq x\Big] \leq \P\Big[\tilde{Y}_t \geq \frac{x}{4R^{r+1}}\Big] +  \P[\max_{n=1,\dots,N}\Vert X^{(n)}\Vert > R] \leq 2e^{\frac{-cNx^2}{16R^{2(r+1)}}} + CNe^{-cR^2} \leq CNe^{-c (\sqrt{N}x)^{\frac{1}{r+2}}}.
            \end{split}
    \end{equation*}
    Together with
    \eqref{eq:thm:concentration2:8}, \eqref{eq:thm:concentration2:15.1}, 
    \eqref{eq:thm:concentration2:16} and \eqref{eq:thm:concentration2:20}, this ensures the existence of $C,c >0$ s.t., for all $x>0$ and $N \in \N$ satisfying $N\sqrt{x} \geq 1$,
    \begin{equation*}
        \begin{split}
            \P\Big[\AWD(\mu,\hat{\mu}^{N}) \geq x + \mathrm{rate}(N)\Big] \leq CNe^{-c (\sqrt{N}x)^{\frac{1}{r+2}}}.
        \end{split}
    \end{equation*}
\end{proof}
\section{Almost sure convergence}\label{sec:asconvergence} This section is dedicated to the proof of Theorem \ref{thm:asconvergence}. We start by considering the compact case.
\begin{lemma}
    \label{lem:asconvergence_compact}
    Let $\mu \in \mathcal{P}(\R^{dT})$ be compactly supported and let $\bar{\mu}^{N}$ be the uniform or non-uniform adapted empirical measure. Then
    \begin{equation*}
         \lim_{N \to \infty} \AWD(\mu,\bar{\mu}^{N}) = 0\quad \P\text{-a.s.}
    \end{equation*}
\end{lemma}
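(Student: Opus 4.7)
The plan is to apply Lemma~\ref{lem:helper} and show that every summand in the resulting bound tends to $0$ almost surely, using compact support to bypass the quantitative moment arguments needed in the general case. Fix $R \geq 1$ with $\operatorname{supp}(\mu) \subseteq [-R, R]^{dT}$. The term $C\Delta_N$ vanishes trivially; the extra piece $\tfrac{C\Delta_N}{N}\sum_{n=1}^N \|X^{(n)}\|$ that appears only in the non-uniform case is dominated by $CRT\Delta_N \to 0$ since the samples are deterministically bounded; and $\WD(\mu_1, \mu_1^N) \to 0$ almost surely by Varadarajan's theorem combined with the fact that weak convergence on a common compact support implies $\WD$-convergence.

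The substantive step is the cube sum $\sum_{t=1}^{T-1}\sum_{G \in \Phi_t^N} \mu^N(G)\,\WD(\mu_G, \mu_G^N)$. My preferred approach is to reduce to Theorem~\ref{thm:asconvergence_cube}: via the affine map $T(x) = (x + R\mathbf{1})/(2R)$, transport $\mu$ to $\nu := T_\# \mu \in \mathcal{P}([0,1]^{dT})$. Since both $\AWD$ and the grid mesh scale by the constant $2R$ under $T$, Theorem~\ref{thm:asconvergence_cube} applied to $\nu$ yields $\AWD(\mu, T^{-1}_\# \hat{\nu}^N) \to 0$ almost surely, where $\hat{\nu}^N$ denotes the adapted empirical measure of $\nu$ as defined in \cite{backhoff2022estimating}. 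The remaining task is to compare $T^{-1}_\# \hat{\nu}^N$ with our $\bar{\mu}^N$: both arise from projecting the same i.i.d. samples onto grids whose mesh has order $\Delta_N$ inside $\operatorname{supp}(\mu)$, so the two projected empirical measures differ by transporting mass over distances $O(\Delta_N)$, contributing only $O(\Delta_N) \to 0$ to the adapted distance.

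The main obstacle I anticipate is precisely this grid-compatibility step: the uniform partition $\hat{\Phi}^N$ (respectively $\check{\Phi}^N$) on $\mathbb{R}^{dT}$ does not align cleanly with the rescaled canonical grid on $[0,1]^{dT}$, so some care is needed to quantify the adapted transport cost between the two projected measures. A self-contained alternative is to adapt the argument of \cite{backhoff2022estimating} directly: since $\operatorname{supp}(\mu)$ meets only finitely many rings $\mathcal{A}_j^t$, one splits the relevant cubes into heavy ones (where Lemma~\ref{lem:helper2} together with Varadarajan's theorem gives $\WD(\mu_G, \mu_G^N) \to 0$ because $N\mu^N(G) \to \infty$) and light ones (whose aggregate mass is negligible by Hoeffding-type tail bounds, and each of which contributes at most a factor $\operatorname{diam}([-R,R]^d)$ to the sum). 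In either route, the delicate point is the combinatorial bookkeeping across a partition whose cardinality grows with $N$; the rescaling reduction packages this bookkeeping into the already-proven compact case, which is why I would pursue it first.
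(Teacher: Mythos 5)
Your preferred route---rescale $[-R,R]^{dT}$ to $[0,1]^{dT}$ and invoke Theorem~\ref{thm:asconvergence_cube}---is exactly the paper's proof, which is two lines: choose $R$ with $\mathrm{supp}(\mu)\subseteq[-R,R]^{dT}$, scale, apply Theorem~\ref{thm:asconvergence_cube}. You are right to flag the grid-compatibility question, which the paper glosses over; however, the remedy you propose contains a genuine gap.

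The claim that ``the two projected empirical measures differ by transporting mass over distances $O(\Delta_N)$, contributing only $O(\Delta_N)\to 0$ to the adapted distance'' is not justified. The obvious paired-atom coupling $\frac{1}{N}\sum_n\delta_{(\varphi_1(X^{(n)}),\varphi_2(X^{(n)}))}$ is in general not bi-causal: if two samples share a $\varphi_1$-cell in the first $t$ coordinates but fall in distinct $\varphi_2$-cells, then conditioning on the $\varphi_2$-projection splits a pool of successors that the $\varphi_1$-projection keeps together, so the conditional laws on each side mismatch by $O(1)$ even though the two projections are pointwise $O(\Delta_N)$-close. This is precisely the mechanism that makes $\AWD$ strictly stronger than $\WD$ (cf. the $\epsilon$-shift example in Section~\ref{sec:results}), so a pointwise bound between the two grid maps cannot control $\AWD$ between the projected measures. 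The same subtlety is handled carefully in Step~1 of the paper's proof of Theorem~\ref{thm:asconvergence}, where $R_\epsilon$ is deliberately chosen to make the projection and the truncation map commute on the grid.

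The repair is to make the two grids coincide rather than to compare them. For the uniform case, take $R\in\N$: since $m=\lceil 1/\Delta_N\rceil\in\N$, the lattice $\tfrac{1}{m}\mathbb{Z}^{dT}$ passes through $\pm R$, so the cells of $\hat{\Phi}^N$ tile $[-R,R]^{dT}$ exactly and rescale to the canonical partition of $[0,1]^{dT}$ with mesh $\tfrac{1}{2Rm}$; Theorem~\ref{thm:asconvergence_cube} applies (its argument only uses $\Delta_N\to 0$ together with control of $N\Delta_N^{dT}$, both preserved under the constant factor $2R$). For the non-uniform case the rescaled grid is not the canonical one, so Theorem~\ref{thm:asconvergence_cube} does not apply verbatim; instead, note that with $R=2^{j_0}$ the support meets only the rings $\mathcal{A}_0,\dots,\mathcal{A}_{j_0}$, on whose union all cell diameters are $O(\Delta_N)$, and rerun the argument of \cite{backhoff2022estimating} (or your self-contained alternative via Lemma~\ref{lem:helper} and the heavy/light-cube dichotomy) on this finite union of uniform patches. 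Your alternative route is a sound fallback; the point is that grid compatibility must be obtained by alignment and cell-diameter bounds, not by a pointwise $O(\Delta_N)$ coupling estimate in $\AWD$.
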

\begin{proof}
    Since $\mu$ is compactly supported on $\R^{dT}$, there exists $R > 0$ 
    such that $\mu$ is supported on $[-R,R]^{dT}$. Then, by scaling $[-R,R]^{dT}$ into $[0,1]^{dT}$, we can apply Theorem \ref{thm:asconvergence_cube} to $\mu$ and complete the proof.
\end{proof}
\begin{proof}[Proof of Theorem \ref{thm:asconvergence}] 
    \hypertarget{proof:asconvergence}{\,\!} 
    The idea of the proof is to construct a measure $\nu \in \calP(\R^{dT})$ that is compactly supported so that we can apply Lemma~\ref{lem:asconvergence_compact}, while still very close to $\mu$ under adapted Wasserstein distance.\\
    \textbf{Step 1: }Since $\mu$ is integrable by assumption, for all $\epsilon > 0$ there exists a large enough $R_{\epsilon} = 2^{j_{\epsilon}}$ for some $j_{\epsilon}\geq 0$ s.t. the 
    \add{cube $K_{\epsilon} \coloneqq [-R_{\epsilon}, R_{\epsilon}]^{dT} \subseteq\R^{dT}$} satisfies $\int_{K_\epsilon^c}\Vert x \Vert \mu(dx) < \epsilon$ and $\mu(K_\epsilon^c) < \epsilon$. \add{For all $t=1,\dots,T$, let $K_{\epsilon,1:t} = [-R_\epsilon,R_\epsilon]^{dt}$, $K_{\epsilon, 1} = K_{\epsilon,1:1}$, and consider $x^{\epsilon} \in K_\epsilon^c$, which will be specified later.} For $x_1 \in \R^{d}$ and $x\in \R^{dT}$, we define $\phi_1 \colon \R^{d} \to \R^{d}$ and $\phi\colon \R^{dT} \to \R^{dT}$ by 
    \begin{equation*}
        \phi_1(x_1) = \left\{\begin{aligned}
            x_1, \quad &x_1 \in K_{\epsilon,1} \\
            x^\epsilon_1, \quad &x_{1}\not\in K_{\epsilon,1} \\
        \end{aligned}\right. ,\quad
        \phi(x) = \left\{\begin{aligned}
            &x, &x \in K_{\epsilon} \\
            &x^\epsilon, &x\not\in K_{\epsilon} \\
        \end{aligned}\right. .
    \end{equation*}
    Let $\pi_1 = (\mathbf{id}, \phi_1)_{\#}\mu_1$ and define, for all $t = 1,\dots,T-1$ and $x_{1:t}, y_{1:t} \in \R^{dt}$, 
    \begin{equation*}
 \pi_{x_{1:t},y_{1:t}} = \left\{\begin{aligned}
(\mathbf{id}, \phi_1)_{\#}\mu_{x_{1:t}}, \quad &x_{1:t} \in K_{\epsilon,1:t}, y_{1:t} = x_{1:t} \\
\mu_{x_{1:t}} \otimes \delta_{x_{t+1}^{\epsilon}}, \quad &x_{1:t}\not\in K_{\epsilon,1:t}, y_{1:t} = x^{\epsilon}_{1:t} \\
\end{aligned}\right. .
\end{equation*}
Further, denote $\pi = \pi_{1}\pi_{x_{1},y_{1}}\dots\pi_{x_{1:t},y_{1:t}}\dots\pi_{x_{1:T-1},y_{1:T-1}}$ and let $\nu$ be its second marginal. Notice that $\pi_{x_{1:t},y_{1:t}}(dx_{t+1}) = \mu_{x_{1:t}}(dx_{t+1})$ and 
    \begin{equation*}
        \pi_{x_{1:t},y_{1:t}}(dy_{t+1}) = \left\{\begin{aligned}
            {\phi_1}_{\#}\mu_{y_{1:t}}, \quad & y_{1:t} \in K_{\epsilon,1:t} \\
            \delta_{x_{t+1}^{\epsilon}}, \quad &y_{1:t}\not\in K_{\epsilon,1:t}\\
        \end{aligned}\right. ,
    \end{equation*}
    which implies that $\pi_{x_{1:t},y_{1:t}} \in \cpl(\mu_{x_{1:t}},\nu_{y_{1:t}})$, and thus $\pi\in\bccpl(\mu,\nu)$. Moreover, it is easy to check that $\nu = \phi_{\#}\mu$. Intuitively, we define $\pi$ in a dynamic way to transport $\mu$ to $\phi_{\#}\mu$. By construction, $\nu$ is compactly supported. Therefore, by Lemma~\ref{lem:asconvergence_compact}, 
    \begin{equation}
    \label{eq:thm:asconvergence:1}
        \limsup_{N \to \infty}\AWD(\nu,\bar{\nu}^{N}) = 0.
    \end{equation}
    Next, we estimate $\AWD(\mu,\nu)$.  Since we have already defined a bi-causal coupling between $\mu$ and $\nu$, that is $\pi = (\mathbf{id}, \phi)_{\#}\mu \in \bccpl(\mu,\nu)$, by the definition of adapted Wasserstein distance we have
    \begin{equation}\label{eq:thm:asconvergence:1.5}
    \begin{split}
        \AWD(\mu,\nu) \leq \int \Vert x - \phi(x) \Vert \mu(dx) &\leq \int_{K_\epsilon^c}\Big(\Vert x \Vert + \Vert \phi(x) \Vert\Big) \mu(dx).
    \end{split}
    \end{equation}
    \begin{figure}[ht]
        \centering
        \includegraphics[width=0.7\linewidth]{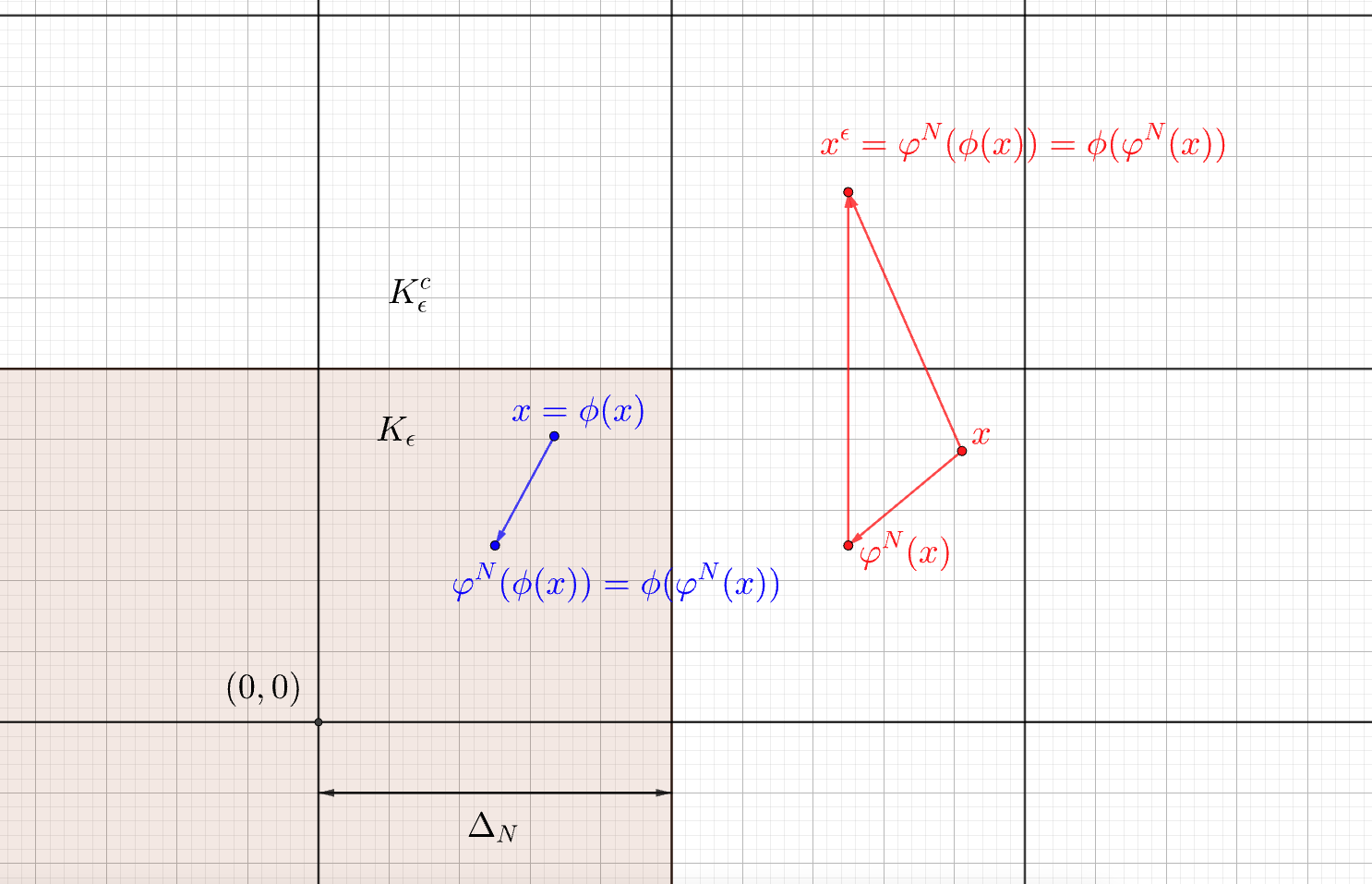}
        \caption{Visualization of $\varphi^N \circ \phi = \phi \circ \varphi^N$.}
        \label{fig:as}
    \end{figure}
    We choose $x^\epsilon = (2^{j_{\epsilon}} + \frac{1}{m},\dots, 2^{j_{\epsilon}} + \frac{1}{m})$ (resp. $x^\epsilon = (2^{j_{\epsilon}} + \frac{2^{j_\epsilon-1}}{m},\dots,2^{j_{\epsilon}} + \frac{2^{j_\epsilon-1}}{m})$) in the case of uniform (resp. non-uniform) grid, where $m = \lceil \frac{1}{\Delta_N}\rceil$, see Figure~\ref{fig:as}. Therefore, we have 
    \begin{equation*}
        \left\{\begin{aligned}
            &\Vert \phi(x) \Vert = \Vert x\Vert \leq 2T\sqrt{d}\Vert x \Vert,\quad &x \in K_\epsilon\\
            &\Vert \phi(x) \Vert = \Vert x^\epsilon\Vert \leq T\sqrt{d}\,\frac{2^j + \frac{2^{j-1}}{m}}{2^j}\Vert x \Vert \leq 2T\sqrt{d}\Vert x \Vert,\quad &x \in K_\epsilon^c
        \end{aligned}\right.  .
    \end{equation*}
    Together with \eqref{eq:thm:asconvergence:1.5}, this yields 
    \begin{equation}\label{eq:thm:asconvergence:2}
        \AWD(\mu,\nu) \leq \int_{K_\epsilon^c}\add{(1+2T\sqrt{d})}\Vert x \Vert \mu(dx) \leq \add{(1+2T\sqrt{d})}\epsilon.
    \end{equation}
    By the triangle equality, we see that
    \begin{equation}\label{eq:thm:asconvergence:triangle}
        \AWD(\mu,\bar{\mu}^{N}) \leq \AWD(\mu,\nu) +  \AWD(\nu,\bar{\nu}^{N})  + \AWD(\bar{\mu}^{N},\bar{\nu}^{N}).
    \end{equation}
    \add{Since $\AWD(\mu,\nu)$ and $\AWD(\nu,\bar{\nu}^{N})$ are already estimated, }the only term that remains to be estimated is $\AWD(\bar{\mu}^{N},\bar{\nu}^{N})$. For this purpose, we next define a bi-causal coupling $\tilde{\pi}$ between $\bar{\mu}^{N}_1$ and $\bar{\nu}^{N}_1$. 
\add{Notice that $\bar{\mu}^{N} = \varphi^{N}_{\#}\mu^{N}$ and $\bar{\nu}^{N} = \varphi^{N}_{\#}\phi_{\#}\mu^{N}$. By choosing $R_{\epsilon}$ to match the boundary of the grid s.t. $\phi\circ\varphi^{N} = \varphi^{N}\circ\phi$, we have $\bar{\nu}^{N} = \phi_{\#}\varphi^{N}_{\#}\mu^{N} = \phi_{\#}\bar{\mu}^{N}$, see Figure~\ref{fig:as}. So, by the same argument used above, we have that $\tilde{\pi} = (\mathbf{id}, \phi)_{\#}\bar{\mu}^{N}\in \bccpl(\bar{\mu}^{N}, \bar{\nu}^{N})$.} Therefore, by the definition of adapted Wasserstein distance,
    \begin{equation}\label{eq:thm:asconvergence:3.2}
    \begin{split}
        \AWD(\bar{\mu}^{N}, \bar{\nu}^{N}) &\leq \int \Vert \varphi^{N}(x) -  \varphi^{N}\circ\phi (x)\Vert \mu^{N}(dx) = \int_{K_{\epsilon}^{c}} \Vert \varphi^{N}(x) -  \varphi^{N}\circ\phi (x)\Vert \mu^{N}(dx)\quad (\phi\vert_{K_{\epsilon}} = \mathbf{id})\\
        &\leq \int_{K_{\epsilon}^{c}}\big( \Vert \varphi^{N}(x)\Vert  +  \Vert \varphi^{N}(x^{\epsilon})\Vert \big)\mu^{N}(dx) = \int_{K_{\epsilon}^{c}}\big( \Vert \varphi^{N}(x)\Vert  +  \Vert x^{\epsilon}\Vert \big)\mu^{N}(dx)\\ 
        &\leq \int_{K_{\epsilon}^{c}} 
        \add{(1+2T\sqrt{d})}\Vert \varphi^{N}(x)\Vert \mu^{N}(dx)= \add{(1+2T\sqrt{d})}\int_{K_\epsilon^c}\Vert x \Vert \bar{\mu}^{N}(dx).
    \end{split}
    \end{equation}
    \textbf{Step 2: } Now we claim that
    \begin{equation}\label{eq:thm:asconvergence:4}
        \begin{split}
            \limsup_{N \to \infty} \int_{K_\epsilon^c}\Vert x \Vert \bar{\mu}^{N}(dx) \leq \limsup_{N \to \infty} \int_{K_\epsilon^c}\Vert x \Vert \mu^{N}(dx).
        \end{split}
    \end{equation}
    We are going to prove this
    separately for the uniform and non-uniform adapted empirical measures. \\
    (i) In the case of uniform adapted empirical measure, for all $x \in \R^{dT}$ we have $\Vert \hat{\varphi}^{N}(x) - x\Vert \leq T\sqrt{d}\Delta_N$. Thus, we obtain that 
    \begin{equation}\label{eq:thm:asconvergence:5}
            \limsup_{N \to \infty} \int_{K_\epsilon^c}\Vert x \Vert \hat{\mu}^{N}(dx) \leq \limsup_{N \to \infty}\Big(\int_{K_\epsilon^c}\Vert x \Vert \mu^{N}(dx) +  T\sqrt{d}\Delta_N \Big)
        = \limsup_{N \to \infty} \int_{K_\epsilon^c}\Vert x \Vert \mu^{N}(dx).
    \end{equation}
    (ii) In the case of non-uniform adapted empirical measure, for all $j\in \N$ and $x \in \mathcal{A}_{j}$ we have $\Vert \check{\varphi}^{N}(x) - x\Vert \leq T\sqrt{d}\Delta_N 2^{j}$. Thus, we obtain that 
    \begin{equation*}
        \begin{aligned}
            \int_{K_\epsilon^c}\Vert x \Vert \check{\mu}^{N}(dx) &= \sum_{j=0}^{\infty}\int_{\mathcal{A}_{j} \cap K_\epsilon^c}\Vert \check{\varphi}^N(x) \Vert \mu^{N}(dx) \leq \sum_{j=0}^{\infty}\int_{\mathcal{A}_{j} \cap K_\epsilon^c}\big( \Vert \check{\varphi}^{N}(x) - x\Vert + \Vert x \Vert \big) \mu^{N}(dx)\\
            &\leq T\sqrt{d}\sum_{j=0}^{\infty}2^{j}\Delta_{N}\mu^{N}(\mathcal{A}_{j}\cap K_\epsilon^c) + \int_{K_\epsilon^c}\Vert x \Vert \mu^{N}(dx)\\
            &= T\sqrt{d}\Delta_N + 2T\sqrt{d}\Delta_N\sum_{j=1}^{\infty}2^{j-1}\mu^{N}(\mathcal{A}_{j}\cap K_\epsilon^c) + \int_{K_\epsilon^c}\Vert x \Vert \mu^{N}(dx),~\text{(Since for all $x \in \mathcal{A}_{j}$, $\Vert x \Vert \geq 2^{j-1}$)}\\
            &\leq T\sqrt{d}\Delta_N + 2T\sqrt{d}\Delta_N\sum_{j=1}^{\infty}\int_{\mathcal{A}_{j}\cap K_\epsilon^c}\Vert x \Vert \mu^{N}(dx)  + \int_{K_\epsilon^c}\Vert x \Vert \mu^{N}(dx)\\
            &= T\sqrt{d}\Delta_N + (2T\sqrt{d}\Delta_{N} + 1 )\int_{ K_\epsilon^c}\Vert x \Vert \mu^{N}(dx).
        \end{aligned}
    \end{equation*}
    By letting $N \to \infty$, $\Delta_N \to 0$ and we get 
    \begin{equation*}
        \limsup_{N \to \infty} \int_{K_\epsilon^c}\Vert x \Vert \check{\mu}^{N}(dx) \leq \limsup_{N \to \infty} \int_{K_\epsilon^c}\Vert x \Vert \mu^{N}(dx).
    \end{equation*}
    This combined with \eqref{eq:thm:asconvergence:5} completes the proof of the claim \eqref{eq:thm:asconvergence:4}.\\
    \textbf{Step 3: } Recall that $\mu$ is integrable by assumption. Then, by the law of large numbers,
    \begin{equation}\label{eq:thm:asconvergence:7}
        \begin{split}
            \lim_{N \to \infty} \int_{K_\epsilon^c}\Vert x \Vert \mu^{N}(dx) = \int_{K_\epsilon^c}\Vert x \Vert \mu(dx) \leq \epsilon.
        \end{split}
    \end{equation}
    Therefore, combining \eqref{eq:thm:asconvergence:3.2}, \eqref{eq:thm:asconvergence:4} and \eqref{eq:thm:asconvergence:7}, we have that
    \begin{equation}\label{eq:thm:asconvergence:8}
        \limsup_{N \to \infty}\AWD(\bar{\mu}^{N},\bar{\nu}^{N}) \leq \add{(1+2T\sqrt{d})}\epsilon.
    \end{equation}
    Finally, combining \eqref{eq:thm:asconvergence:1}, \eqref{eq:thm:asconvergence:2}, \eqref{eq:thm:asconvergence:triangle} and \eqref{eq:thm:asconvergence:8}, we conclude that 
    \[
        \limsup_{N \to \infty}\AWD(\mu,\bar{\mu}^{N})  \leq \limsup_{N \to \infty}\Big(\AWD(\mu,\nu) +  \AWD(\nu,\bar{\nu}^{N})  + \AWD(\bar{\mu}^{N},\bar{\nu}^{N})\Big)\leq \add{(2+4T\sqrt{d})}\epsilon.
        \]
  We then let $\epsilon \to 0$ and complete the proof of Theorem \ref{thm:asconvergence}. 
\end{proof}

\printbibliography
\end{document}